\def\NAT@def@citea{\def\@citea{\NAT@separator}}
\numberwithin{equation}{section}
\newcommand{\hsp}{\mathcal{H}}
\newcommand{\oL }{\mathcal{L}(\hsp) }
\newcommand{\dfc}[1]{D_{\Theta_#1}}
\newcommand{\dcf}[1]{\widetilde{D}_{\Theta_#1}}
\newcommand{\dfct}[1]{{D}_{\widetilde{\Theta}_#1}}
\newcommand{\h}{\mathcal{H}}
\newcommand{\ke}{{\mathbf{k}}}
\newcommand{\C}{{\mathbf{C}}}
\newcommand{\F}{{\mathbf{F}}}
\newcommand{\J}{{\mathbf{J}}}
\newcommand{\G}{{\mathbf{G}}}
\newcommand{\z}{{z}}
\newtheorem{theorem}{Theorem}[section]
\newtheorem{proposition}[theorem]{Proposition}
\newtheorem{corollary}[theorem]{Corollary}
\newtheorem{lemma}[theorem]{Lemma}
\theoremstyle{definition}
\newtheorem{remark}[theorem]{Remark}
\numberwithin{equation}{section}
\title[Characterizations of MATHO's]{Characterizations of Matrix valued asymmetric truncated Hankel operators}
\author[R. Khan]{Rewayat Khan}
\address{R. Khan, Abbottabad University of Science and Technology, Pakistan}
\email{rewayat.khan@gmail.com}
\author[J. E. Lee]{Ji Eun Lee}
\address{Ji Eun Lee,  Department of Mathematics and Statistics, Sejong University, Seoul, 05006,  Republic of Korea }
\email{jieunlee7@sejong.ac.kr}
\keywords{model spaces, conjugations, matrix valued asymmetric truncated  Toeplitz operators, matrix valued asymmetric truncated Hankel operators.}
\subjclass[2010]{Primary 47B35, Secondary 47B32, 30D20}
\begin{document}
\begin{abstract}{In this paper we introduce the class of matrix valued asymmetric truncated Hankel operators. By using characterizations of matrix valued asymmetric truncated Toeplitz operators, we characterize matrix valued asymmetric truncated Hankel operators in the case
when two involved inner matrices are $J$-symmetric.}
\end{abstract}
\maketitle

\section{Introduction}

  As usual, $\mathbb{D}=\{\lambda\in\mathbb{C}:|\lambda|<1\}$ and $\mathbb{T}=\partial\mathbb{D}=\{\z\in\mathbb{C}: |\z|=1\}$ denote the unit disc and unit circle, respectively. { Let} $H^2$ denote the classical Hardy space in the unit disk $\mathbb{D}$.  Truncated Hankel operators (THO's) (respectively truncated Toeplitz operators (TTO's)) and asymmetric truncated Hankel operators (ATHO's)(respectively asymmetric truncated Toeplitz operators (ATTO's)) are compressions of multiplication operator to the backward shift invariant subspaces of $H^2$ (with two possibly different underlying subspaces called model spaces in the asymmetric case). Each of these subspaces is of the form $K_{\theta}=(\theta H^{2})^{\perp}=H^2\ominus \theta H^2$, where $\theta$ a complex-valued inner function: $\theta\in H^\infty$ and $|\theta(\z)|=1$ a.e. on the unit circle $\mathbb{T}$. Since D. Sarasons paper \cite{sar} TTO's, and later on ATTO's \cite{part,part2, BCKP}, MTTO's \cite{KT} and MATTO's \cite{RK2, RK3}, have been intensly studied.

The theory of THO's and ATHO's in the scalar case can be extended to vector version, defined on subspaces of a vector valued Hardy space $H^2(\hsp)$ with a finite dimensional complex Hilbert space $\hsp$. These subspaces are corresponding to \textit{inner functions}: An analytic function $\Theta$ having values in $\oL$ (the algebra of all bounded linear operators on $\hsp$), bounded and such that the boundary values $\Theta(z)$ are unitary operators a.e. on $\mathbb{T}$. We will consider only \textit{pure inner functions}, that is, $\Theta$ such that $\|\Theta(0)\|_{\oL}< 1$.
 The model space
$$K_{\Theta}=[ \Theta H^{2}(\hsp)]^{\perp}$$
corresponding to an inner function $\Theta$ is invariant under the backward shift $S^*$. Moreover, by the vector valued version of Beurling's invariant subspace theorem, each closed (nontrivial) $S^*$--invariant subspace of $H^2(\hsp)$ is a model space (\cite[Chapter 5, Theorem 1.10]{berc}). Let $P_{\Theta}$ be the orthogonal projection from $L^{2}(\oL)$ onto $K_{\Theta}$.

 Sections 2 and 3 contain preliminary material on spaces of vector valued functions (Section 2), model spaces and MATTO's (Section 3). Section 4 contains some results related to matrix valued asymmetric truncated Hankel operators. In section 5, we just remind the characterization of matrix valued asymmetric truncated Toeplitz operators (see \cite{RK3}). In section 6, connection between matrix valued asymmetric truncated Toeplitz (MATTO's)  and matrix valued asymmetric truncated Hankel operators (MATHO's) is presented. Section 7 and section 8 is devoted to  characterizes matrix valued asymmetric truncated Hankel operators by using compressed shift and modified compressed shift respectively.

\section{Spaces of vector valued functions and their operators}\label{s2}

Let $\hsp$ be a complex separable Hilbert space. In what follows $\|\cdot\|_\hsp$ and $\langle\cdot,\cdot\rangle_{\hsp}$ will denote the norm and the inner product in $\hsp$, respectively. Moreover, we will assume that $\dim\hsp<\infty$. The space $L^{2}(\hsp)$ can be defined as
$$L^2(\h)=\{f\colon\mathbb{T}\to \hsp: f \text{ is measurable and} \int_{\mathbb{T}} \| f(\z)\|_\hsp^2\,dm(\z)<\infty  \}$$
($m$ being the normalized Lebesgue measure on $\mathbb{T}$). As usual, each $f\in L^2(\hsp)$ is interpreted as a class of functions equal to the representing $f$ a.e. on $\mathbb{T}$ with respect to $m$. The space $L^2(\h)$ is a (separable) Hilbert space with the inner product given by
\begin{equation*}
\langle f,g\rangle_{L^{2}(\hsp)}=\int_{\mathbb{T}}\langle f(\z),g(\z)\rangle_{\hsp}\,dm(\z),\quad f,g\in L^2(\hsp)
\end{equation*}
and the induced norm is given by
$$\|f\|_{L^2(\hsp)}^2=\int_{\mathbb{T}} \| f(\z)\|_\hsp^2\,dm(\z)=\sum\limits_{n=-\infty}^{\infty}\|a_{n}\|_\hsp^{2}.$$

The vector valued Hardy space $H^2(\hsp)$ is defined as the set of all the elements of $L^2(\hsp)$ whose Fourier coefficients with negative indices vanish. Each $f\in H^2(\hsp)$, $\displaystyle f(\z)=\sum_{n=0}^{\infty} a_n \z^n$, can also be identified with a function
$$f(\lambda)=\sum\limits_{n=0}^\infty a_n\lambda^n,\quad \lambda\in\mathbb{D},$$
analytic in the unit disk $\mathbb{D}$.

 We can also consider the spaces of all essentially bounded functions on the $\mathbb{T}$. This space is denoted by
 $L^{\infty}(\hsp)$ and
 $$H^{\infty}(\hsp)=L^{\infty}(\hsp)\cap H^{2}(\hsp).$$

 Here we assume that $\dim\hsp<\infty$ so we can consider $\oL $ as a Hilbert space with the Hilbert--Schmidt norm and we may also define as above the spaces $L^2(\oL)$ and $H^2(\oL)$:
$$H^2(\oL)=\left\{\F\in L^2(\oL): \F(\z)=\sum_{n=0}^{\infty} F_n \z^n\right\}.$$
We thus have
$$L^2(\oL)=[zH^2(\oL)]^{*}\oplus H^{2}(\oL).$$

For each $\lambda\in \mathbb{D}$ the function $\ke_\lambda(z)= (1-\bar{\lambda}z)^{-1}I_{\hsp}$ is bounded and belongs to $H^{2}(\oL)$, and has the following reproducing property
$$\langle f, \ke_\lambda x\rangle_{L^2(\hsp)}=\langle f(\lambda),x\rangle_{\hsp}, \quad f\in H^2(\hsp).$$

To each $\Phi\in L^\infty(\oL)$ there corresponds a multiplication operator $M_\Phi:L^2(\hsp)\to L^2(\hsp)$: for $f\in L^2(\hsp)$,
$$(M_\Phi f)(\z)=\Phi(\z)f(\z)\quad \text{a.e. on }\mathbb{T}.$$
Let $P_{+}$ denote the orthogonal projection from $L^2(\hsp)$ to $H^{2}(\hsp)$. Then
the Toeplitz operator $T_{\Phi}$ is the compression of $M_{\Phi}$ to the Hardy space: $T_{\Phi}:H^2(\hsp)\to H^2(\hsp)$, $$T_{\Phi}f=P_+M_{\Phi}f\quad\text{for }f\in H^2(\hsp).$$

It is clear that $(M_\Phi)^*=M_{\Phi^*}$ and $(T_\Phi)^*=T_{\Phi^*}$, where $\Phi^*(\z)=\Phi(\z)^*$ a.e. on $\mathbb{T}$. The operators $S$ (forward shift) and $S^{*}$ (backward shift) are examples of Toeplitz operators with symbols $zI_{\hsp}$ and $\overline{z}I_{\hsp}$, respectively, where $I_{\hsp}$ is the identity operator on $\hsp$.

\medskip

\section{ MATTO's and MATHO's}\label{s3}

For two operator valued inner functions $\Theta_1,\Theta_2\in H^{\infty}(\oL)$ and $\Phi\in L^2(\oL )$ let
\begin{equation}\label{11}
A_{\Phi}^{\Theta_{1}, \Theta_{2}}f=P_{\Theta_{2}}(\Phi f),\quad f\in K_{\Theta_{1}}\cap H^{\infty}(\oL ).
\end{equation}

 The operator $A_{\Phi}^{\Theta_{1}, \Theta_{2}}$ is called a matrix valued asymmetric truncated Toeplitz operator (MATTO). When $\Theta_1=\Theta_2=\Theta$, then $A_{\Phi}^{\Theta}=A_{\Phi}^{\Theta, \Theta}$ is called a matrix valued truncated Toeplitz operator (MTTO's) \cite{KT}. Both are densely defined. Let $\mathcal{MT}(\Theta_{1},\Theta_{2})$ be the set of all MATTO's of the form \eqref{11} which can be extended boundedly to the whole space $K_{\Theta_{1}}$ and for $\Theta_{1}=\Theta_{2}=\Theta$ let $\mathcal{MT}(\Theta )=\mathcal{MT}(\Theta,\Theta)$.

 Two important examples of operators from $\mathcal{MT}(\Theta )$ are the model operators
\begin{equation}\label{modl}
S_{\Theta}=A_{z }^{\Theta }=A_{zI_{\hsp}}^{\Theta }\quad\text{and}\quad S_{\Theta }^*=A_{\bar z }^{\Theta }=A_{\bar zI_{\hsp}}^{\Theta  }.
\end{equation}
These operators are models for a class of Hilbert space contractions. For example, each $C_0$ contraction with finite defect indices is unitarily equivalent to $S_{\Theta}$ for some
 operator valued inner function $\Theta$ (see \cite[Chapter IV]{NF}).

Truncated Hankel operators were first defined and studied by C. Gu in \cite{Gu}. Their asymmetric version were introduced in \cite{zero}.

An asymmetric matrix valued truncated Hankel operator $B_{\Phi}^{\Theta_{1}, \Theta_{2}}$ with symbol $\Phi\in L^2(\oL)$ is an operator from $K_{\Theta_1}$ into $K_{\Theta_2}$ densely defined by
\begin{equation}\label{MTHO: defin}
B_{\Phi}^{\Theta_{1}, \Theta_{2}}f=P_{\Theta_2}\J(I-P_{+})(\Phi f), \quad f\in K_{\Theta_1}^{\infty},
\end{equation}
where $\J:L^2(E)\to L^2(E)$ is the flip operator given by
$$\J f(z)=\overline{z}f(\overline{z}), \quad |z|=1.$$
Let $\mathcal{MH}(\Theta_1, \Theta_2)$ be the set of all MATHO's of the form (\ref{MTHO: defin}) which can be extended boundedly to the whole space $K_{\Theta_1}$ and $\mathcal{MH}(\Theta)$ denote the set of all MTHO's.
Some algebraic properties of THO's were studied in \cite{Gu}. Here we continue the investigation started in \cite{BM, zero}.

For each $\lambda\in\mathbb{D}$ we can consider
$$\ke_{\lambda}^{\Theta}(z)=\tfrac1{1-\bar\lambda z}(I_\hsp-\Theta(z)\Theta(\lambda)^*)\in H^{\infty}(\oL).$$
Then, for each $x\in\hsp$ it has the following reproducing property
$$\langle f, \ke_{\lambda}^{\Theta}x\rangle_{L^2(\hsp)}=\langle f(\lambda), x\rangle_\hsp\quad\text{for every }f\in K_{\Theta}.$$
It follows in particular that $K_{\Theta}^{\infty}=K_{\Theta}\cap H^{\infty}(\hsp)$ is a dense subset of $K_{\Theta}$.

Let $$\mathcal{D}_{\Theta}=\{(I_{\hsp}-\Theta\Theta(0)^*)x:\ x\in\hsp\}=\{\ke_{0}^{\Theta}x:\ x\in\hsp\}\subset K_{\Theta}.$$
For each $f\in K_{\Theta}$ we have $f\perp \mathcal{D}_{\Theta}$ if and only if $f(0)=0$.
It follows that
$$(S_{\Theta}^*f)(z)=\left\{\begin{array}{cl}
\bar z f(z)&\text{for }f\perp\mathcal{D}_{\Theta},\\
-\bar z\big(\Theta(z)-\Theta(0)\big)\Theta(0)^*x&\text{for }f=\ke_{0}^{\Theta}x\in\mathcal{D}_{\Theta}.
\end{array}\right.$$
The defect operator of $S_{\Theta}$ is denoted by  $$D_{\Theta}:=I_{K_{\Theta}}-S_{\Theta}S_{\Theta}^*.$$
Since for each $f\in H^2(\hsp)$ we have $(I_{H^2(\hsp)}-SS^*)f=f(0)$ (a constant function in $H^2(\hsp)$), it follows that for $f\in K_{\Theta}$,
\begin{equation}\label{ect}
\begin{split}
D_{\Theta}f&=(I_{K_{\Theta}}-S_{\Theta}S_{\Theta}^*)f=P_{\Theta}(I_{H^2(\hsp)}-SS^*)f\\
&=(I_{\hsp}-\Theta\Theta(0)^*)f(0)=\ke_{0}^{\Theta}f(0)\in \mathcal{D}_{\Theta}.
\end{split}
\end{equation}
More precisely,
$$D_{\Theta}f=\left\{\begin{array}{cl}
0&\text{for }f\perp\mathcal{D}_{\Theta},\\
\ke_{0}^{\Theta}(I_{\hsp}-\Theta(0)\Theta(0)^*)x&\text{for }f=\ke_{0}^{\Theta}x\in\mathcal{D}_{\Theta}.
\end{array}\right.$$

\medskip

\section{MATHO's and Model spaces}\label{s4}

In \cite{RK} the author considers the generalized Crofoot transform. For a pure inner function $\Theta\in H^{\infty}(\oL )$ and $W\in \oL $ such that $\|W\|_{\oL}<1$ define the generalized Crofoot transform $J_{W}^{\Theta}: L^2(\hsp)\to L^2(\hsp )$ by
\begin{equation}\label{J_W}
J_{W}^{\Theta}f=D_{W^{*}}(I_{L^2(\hsp)}-\Theta W^{*})^{-1}f,\quad f\in L^2(\hsp).
\end{equation}
The adjoint of $J_{W}^{\Theta}$ is given by the formula
\begin{equation}\label{J_W adj}
(J_{W}^{\Theta})^{\ast}g=D_{W^{*}}(I_{L^2(\hsp)}+\Theta^{W} W^{*})^{-1}g,\quad g\in L^2(\hsp).
\end{equation}
Then $J_{W}^{\Theta}$ is unitary and maps $K_{\Theta}$ onto $K_{\Theta^{W}}$ where
\begin{equation}\label{theta-W}
\Theta^{W}(z)=-W+D_{W^{*}}(I_{L^2(\hsp)}-\Theta(z) W^{*})^{-1}\Theta(z) D_{W}.
\end{equation}
\smallskip

The following theorem describes the action of the Crofoot transform on $\mathcal{MH}(\Theta_{1}, \Theta_{2})$ (see \cite{RK} for the case $\Theta_1=\Theta_2$ and \cite{blicharz1} for the scalar case):
\begin{theorem}
	Let $\Theta_{1},\Theta_{2}\in H^{\infty}(\oL)$ be two pure inner functions and let $W_1,W_2\in \oL$ be such that $\|W_1\|_{\oL}<1$ and $\|W_2\|_{\oL}<1$. A bounded linear operator $B:K_{\Theta_1}\to K_{\Theta_2}$ belongs to $\mathcal{MH}(\Theta_{1}, \Theta_{2})$ if and only if $J_{W_{2}}^{\Theta_2}B(J_{W_{1}}^{\Theta_1})^{*}$ belongs to $\mathcal{MH}(\Theta_{1}^{W_1}, \Theta_{2}^{W_2})$. More precisely, $B=B_{\Phi}^{\Theta_{1}, \Theta_{2}}\in \mathcal{MH}(\Theta_{1}, \Theta_{2})$ if and only if $$J_{W_{2}}^{\Theta_2}B(J_{W_{1}}^{\Theta_1})^{*}=B_{\Psi}^{\Theta_{1}^{W_1}, \Theta_{2}^{W_2}}\in\mathcal{MH}(\Theta_{1}^{W_1}, \Theta_{2}^{W_2})$$ where $\Psi$ is given by
\begin{equation*}\label{Psi 0}
\Psi(z) = D_{W_{2}^{*}}(I_{\oL}-\Theta_{2}(z)W_{2}^{*})^{-1}\Phi(z) D_{W_{1}^{*}}(I_{\oL}+\Theta^{W_1}_{1}(z) W_{1}^{*})^{-1}.
\end{equation*}
\end{theorem}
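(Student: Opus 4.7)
My plan is to establish the operator identity
\begin{equation*}
J_{W_2}^{\Theta_2}\, B_\Phi^{\Theta_1,\Theta_2}\,(J_{W_1}^{\Theta_1})^* g \;=\; B_\Psi^{\Theta_1^{W_1},\Theta_2^{W_2}}\, g, \qquad g\in K_{\Theta_1^{W_1}}^\infty,
\end{equation*}
by direct computation. The ``if and only if'' then follows at once because the Crofoot transform is invertible: replacing $W_i$ by $-W_i$ in \eqref{J_W}--\eqref{theta-W} and using the involution $(\Theta^W)^{-W}=\Theta$ gives its inverse, so the same equality run backwards converts any MATHO over $(\Theta_1^{W_1},\Theta_2^{W_2})$ into one over $(\Theta_1,\Theta_2)$.

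The first step is to realise $J_W^\Theta$ and $(J_W^\Theta)^*$ as multiplication operators on $L^2(\hsp)$. Setting $\alpha_W^\Theta(z) = D_{W^*}(I_\hsp-\Theta(z)W^*)^{-1}$ and $\beta_W^\Theta(z) = D_{W^*}(I_\hsp+\Theta^W(z)W^*)^{-1}$, the Sz.-Nagy--Foias identity $W^*D_{W^*} = D_W W^*$ combined with \eqref{theta-W} yields $\alpha_W^\Theta(z)\,\beta_W^\Theta(z) \equiv I_\hsp$; hence $M_{\alpha_W^\Theta}$ and $M_{\beta_W^\Theta}$ are mutually inverse on $L^2(\hsp)$, both preserve $H^2(\hsp)$ (the symbols lie in $H^\infty(\oL)$), and $M_{\alpha_W^\Theta}|_{K_\Theta}$ realises the Crofoot unitary onto $K_{\Theta^W}$. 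Substituting $(J_{W_1}^{\Theta_1})^* g = \beta_1 g \in K_{\Theta_1}^\infty$ (with the shorthand $\alpha_i = \alpha_{W_i}^{\Theta_i}$, $\beta_i = \beta_{W_i}^{\Theta_i}$) into the definition \eqref{MTHO: defin}, the whole theorem reduces to the symbol identity
\begin{equation*}
\alpha_2 \cdot P_{\Theta_2}\,\J(I-P_+)(\Phi\beta_1 g) \;=\; P_{\Theta_2^{W_2}}\,\J(I-P_+)(\alpha_2\Phi\beta_1 g).
\end{equation*}

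The main obstacle will be this last identity. Since $\alpha_2\in H^\infty(\oL)$ is analytic, $\J$ intertwines it with the antianalytic $\widetilde{\alpha_2}(z) = \alpha_2(\bar z)$ via $\J\,M_{\alpha_2} = M_{\widetilde{\alpha_2}}\,\J$, so $M_{\alpha_2}$ does not commute with $\J(I-P_+)$ on all of $L^2(\hsp)$. The resolution is to show that the discrepancy between the two sides, once fed into the outer projection $P_{\Theta_2^{W_2}}$, vanishes. The key algebraic input is the Crofoot consequence $\alpha_2\Theta_2 = (\Theta_2^{W_2}+W_2)D_{W_2}^{-1}$ (read off from \eqref{theta-W}), which allows the error term to be rewritten so that it lands in $\Theta_2^{W_2} H^2(\hsp) \oplus (H^2(\hsp))^\perp = \ker P_{\Theta_2^{W_2}}$. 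Carrying out this cancellation---adapting the scalar asymmetric argument of \cite{blicharz1} and the matrix symmetric argument of \cite{RK} to the present matrix asymmetric setting---yields the symbol identity, and hence the theorem.
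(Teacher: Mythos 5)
Your reduction of the theorem to the displayed symbol identity is exactly where the argument breaks down: that identity is false, and the cancellation you propose does not take place. Pair both sides against a kernel $k\in K_{\Theta_2^{W_2}}$. Since $\bigl(J_{W_1}^{\Theta_1}\bigr)^*g=\beta_1 g$ and $\J u(z)=\bar z u(\bar z)$, the left-hand side gives the integrand $\alpha_2(z)\,\bar z\,\Phi(\bar z)\beta_1(\bar z)g(\bar z)$, while the right-hand side gives $\bar z\,\alpha_2(\bar z)\Phi(\bar z)\beta_1(\bar z)g(\bar z)$; the discrepancy is multiplication by $\alpha_2(z)$ versus $\alpha_2(\bar z)$, and it survives the outer projection. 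A concrete check: take $\hsp=\mathbb{C}$, $\Theta_1=\Theta_2=z$, $W_1=0$, $W_2=w\neq 0$, $\Phi(z)=\bar z$, $g=1$. Then $B_\Phi^{z,z}1=1$ and the left-hand side is $J_w^{z}1=(1-|w|^2)^{1/2}(1-\bar w z)^{-1}$, whereas with $\Psi(z)=\alpha_2(z)\bar z$ one computes $(I-P_+)\Psi=(1-|w|^2)^{1/2}\bar z$, $\J$ of this is the constant $(1-|w|^2)^{1/2}$, and its projection onto $K_{\Theta_2^{W_2}}=\operatorname{span}\{(1-\bar wz)^{-1}\}$ is $(1-|w|^2)^{3/2}(1-\bar wz)^{-1}$. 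The two sides differ by the factor $1-|w|^2$, so no manipulation of the error term via $\alpha_2\Theta_2=(\Theta_2^{W_2}+W_2)D_{W_2}^{-1}$ can push it into $\ker P_{\Theta_2^{W_2}}$.

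The computation above also tells you the repair: because the Hankel flip $\J$ reflects the variable, the left Crofoot factor must be evaluated at $\bar z$, i.e. the identity that is actually true (and immediate from the kernel pairing) is $M_{\alpha_2}P_{\Theta_2}\J(I-P_+)(\Phi\beta_1 g)=P_{\Theta_2^{W_2}}\J(I-P_+)(\Psi g)$ with $\Psi(z)=\alpha_2(\bar z)\Phi(z)\beta_1(z)$, matching the pattern of Theorems \ref{ttau} and \ref{4.5}, where the factor landing on the $K_{\Theta_2}$ side always appears reflected. Be aware that the statement you were asked to prove carries the unreflected $\Theta_2(z)$, and the paper's own proof (which is precisely this reproducing-kernel pairing) silently replaces $\alpha_2(z)$ by $\alpha_2(\bar z)$ at the step where $J_{W_2}^{\Theta_2}$ is moved inside the integral; so the slip is inherited from the source, but your proposal compounds it by asserting a cancellation that demonstrably fails, and that assertion is the gap you must close. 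The remaining scaffolding of your argument --- the realization of $J_W^{\Theta}$ and its adjoint as mutually inverse multiplication operators with $H^\infty$ symbols, and the inverse-Crofoot argument for the converse implication --- is sound.
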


\begin{proof}
Let $f\in K_{\Theta_{1}^{W_1}}$ and $x\in \hsp$. Then
\begin{eqnarray*}
&&\langle J_{W_2}^{\Theta_2}B_{\Phi}^{\Theta_{1}, \Theta_{2}}(J_{W_1}^{\Theta_1})^{\ast}f, k_{\lambda}^{\Theta^{W_2}_{2}}x\rangle\cr
&=&\langle B_{\Phi}^{\Theta_{1}, \Theta_{2}}(J_{W_1}^{\Theta_1})^{\ast}f, (J_{W_2}^{\Theta_2})^{\ast}k_{\lambda}^{\Theta^{W_2}_{2}}x\rangle\\
&=&\langle P_{\Theta_{2}}\J(I-P_{+})(\Phi (J_{W_1}^{\Theta_1})^{\ast}f), (J_{W_2}^{\Theta_2})^{\ast}k_{\lambda}^{\Theta^{W_2}_{2}}x\rangle\\
&=&\langle \J(\Phi (J_{W_1}^{\Theta_1})^{\ast}f), k_{\lambda}^{\Theta_{2}}(I-W_{2}\Theta_{2}^{*})^{-1}D_{W_{2}^{*}}x\rangle \\
&=&\int_{\mathbb{T}}\langle \bar z(\Phi (J_{W_1}^{\Theta_1})^{\ast}f)(\bar z), (J_{W_2}^{\Theta_2})^{\ast}k_{\lambda}^{\Theta^{W_2}_{2}}x\rangle \\
&=&\int_{\mathbb{T}}\langle \bar z\Phi(\bar z) ((J_{W_1}^{\Theta_1})^{\ast}f)(\bar z), (J_{W_2}^{\Theta_2})^{\ast}k_{\lambda}^{\Theta^{W_2}_{2}}x\rangle \\
&=&\int_{\mathbb{T}}\langle \bar z \Phi(\bar z) D_{W_{1}^{*}}(I+\Theta^{\prime}_{1}(\bar z)W_{1}^{*})^{-1}f(\bar z), (J_{W_2}^{\Theta_2})^{\ast}k_{\lambda}^{\Theta^{W_2}_{2}}x\rangle \\
&=&\int_{\mathbb{T}}\langle J_{W_2}^{\Theta_2}(\bar z \Phi(\bar z) D_{W_{1}^{*}}(I+\Theta^{\prime}_{1}(\bar z)W_{1}^{*})^{-1}f(\bar z)), k_{\lambda}^{\Theta^{W_2}_{2}}x\rangle \\
&=&\int_{\mathbb{T}}\langle D_{W_{2}^{*}}(I-\Theta_{2}(\bar z)W_{2}^{*})^{-1} \bar z\Phi(\bar z) D_{W_{1}^{*}}(I+\Theta^{\prime}_{1}(\bar z)W_{1}^{*})^{-1}f(\bar z), k_{\lambda}^{\Theta^{W_2}_{2}}x\rangle\\
&=&\int_{\mathbb{T}}\langle \bar z D_{W_{2}^{*}}(I-\Theta_{2}(\bar z)W_{2}^{*})^{-1} \Phi(\bar z) D_{W_{1}^{*}}(I+\Theta^{\prime}_{1}(\bar z)W_{1}^{*})^{-1}f(\bar z), k_{\lambda}^{\Theta^{W_2}_{2}}x\rangle\\
&=&\int_{\mathbb{T}}\langle \J (D_{W_{2}^{*}}(I-\Theta_{2}(z)W_{2}^{*})^{-1} \Phi(z) D_{W_{1}^{*}}(I+\Theta^{\prime}_{1}(z)W_{1}^{*})^{-1}f(z)), k_{\lambda}^{\Theta^{W_2}_{2}}x\rangle\\
&=&\int_{\mathbb{T}}\langle P_{\Theta^{W_2}_{2}}\J (\Psi(z)f(z)), k_{\lambda}^{\Theta^{W_2}_{2}}x\rangle\\
&=&\langle P_{\Theta^{W_2}_{2}}\J (\Psi f), k_{\lambda}^{\Theta^{W_2}_{2}}x\rangle\\
&=&\langle P_{\Theta^{W_2}_{2}}\J (I-P_{+})(\Psi f), k_{\lambda}^{\Theta^{W_2}_{2}}x\rangle\\
&=&\langle B^{\Theta^{W_1}_{1}, \Theta^{W_2}_{2}}_{\Psi}f, k_{\lambda}^{\Theta^{W_2}_{2}}x\rangle,
\end{eqnarray*}
where $\Psi$ is given by $$\Psi(z) = D_{W_{2}^{*}}(I_{\oL}-\Theta_{2}(z)W_{2}^{*})^{-1}\Phi(z) D_{W_{1}^{*}}(I_{\oL}+\Theta^{W_1}_{1}(z) W_{1}^{*})^{-1}.$$
Hence we complete the proof.
\end{proof}

\smallskip
Recall that if $\Theta\in H^{\infty}(\oL )$ is an inner function, then so is
$$\widetilde{\Theta}(z)=\Theta(\bar z)^*.$$
Let us consider the map $\tau_{\Theta}:L^2(\hsp)\to L^2(\hsp)$ defined for $f\in L^2(\hsp)$ by
\begin{equation}
\label{tau}(\tau_{\Theta}f)(z)=\bar z\Theta(\bar z)^*f(\bar z)=\bar z\widetilde{\Theta}( z)f(\bar z)\quad\text{a.e. on }\mathbb{T}.
\end{equation}
The map $\tau_{\Theta}$ is an isometry and its adjoint $\tau_{\Theta}^*=\tau_{\widetilde{\Theta}}$ is also its inverse. Hence $\tau_{\Theta}$ is unitary. Moreover, it is easy to verify that
$$\tau_{\Theta}(\Theta H^2(\hsp))\subset H^2(\hsp)^{\perp}\quad\text{and}\quad \tau_{\Theta}( H^2(\hsp)^{\perp})\subset \widetilde{\Theta} H^2(\hsp),$$
which implies that
$$ \tau_{\Theta}(K_{\Theta})=K_{\widetilde{\Theta}}.$$

\begin{theorem}\label{ttau}
	Let $\Theta_{1},\Theta_{2}\in H^{\infty}(\oL)$ be two pure inner functions. A bounded linear operator $B:K_{\Theta_1}\to K_{\Theta_2}$ belongs to $\mathcal{MH}(\Theta_{1}, \Theta_{2})$ if and only if $\tau_{\Theta_2}B\,\tau_{\Theta_1}^{*}$ belongs to $\mathcal{MH}(\widetilde{\Theta}_{1}, \widetilde{\Theta}_{2})$. More precisely, $B=B_{\Phi}^{\Theta_{1}, \Theta_{2}}\in \mathcal{MH}(\Theta_{1}, \Theta_{2})$ if and only if $\tau_{\Theta_2}B\,\tau_{\Theta_1}^{*}=B_{\Psi}^{\widetilde{\Theta}_{1}, \widetilde{\Theta}_{2}}\in\mathcal{MH}(\widetilde{\Theta}_{1}, \widetilde{\Theta}_{2})$ with
	\begin{equation}\label{symbda}
	\begin{split}
	\Psi(z) &=\Theta_2( z)^*\Phi(\bar z)\widetilde{\Theta}_1(z)^* \\&=\widetilde{\Theta}_2( \bar z)\Phi(\bar z)\Theta_1( \bar z) \quad\text{a.e. on }\mathbb{T}.
	\end{split}
	\end{equation}
\end{theorem}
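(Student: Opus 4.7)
My approach is operator-algebraic: I aim to reduce $\tau_{\Theta_{2}}B_{\Phi}^{\Theta_{1},\Theta_{2}}\tau_{\Theta_{1}}^{*}$ to a plain multiplication-projection composition from which the new symbol $\Psi$ can be read off. The starting observation, extracted directly from \eqref{tau}, is the factorization $\tau_{\Theta}=M_{\widetilde{\Theta}}\J$; consequently $\tau_{\Theta}^{*}=\tau_{\widetilde{\Theta}}=M_{\Theta}\J$ and $\tau_{\Theta}\J=M_{\widetilde{\Theta}}$.

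Two preparatory simplifications then do most of the work. First, since $\J$ swaps $H^{2}(\hsp)$ and $H^{2}(\hsp)^{\perp}$, one has $\J P_{+}=P_{-}\J$, so $P_{\Theta_{2}}\J P_{+}=0$. Applied to \eqref{MTHO: defin} this gives the cleaner identity $B_{\Phi}^{\Theta_{1},\Theta_{2}}h=P_{\Theta_{2}}\J M_{\Phi}h$ on $K_{\Theta_{1}}^{\infty}$. Second, because $\tau_{\Theta_{2}}$ is unitary on $L^{2}(\hsp)$ with $\tau_{\Theta_{2}}K_{\Theta_{2}}=K_{\widetilde{\Theta}_{2}}$, it automatically also maps $K_{\Theta_{2}}^{\perp}$ onto $K_{\widetilde{\Theta}_{2}}^{\perp}$, yielding the free intertwining identity $\tau_{\Theta_{2}}P_{\Theta_{2}}=P_{\widetilde{\Theta}_{2}}\tau_{\Theta_{2}}$.

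Combining these with $\tau_{\Theta_{1}}^{*}=M_{\Theta_{1}}\J$, a short chain of substitutions produces
\begin{equation*}
\tau_{\Theta_{2}}B_{\Phi}^{\Theta_{1},\Theta_{2}}\tau_{\Theta_{1}}^{*}f
= P_{\widetilde{\Theta}_{2}}\tau_{\Theta_{2}}\J M_{\Phi\Theta_{1}}\J f
= P_{\widetilde{\Theta}_{2}}M_{\widetilde{\Theta}_{2}\Phi\Theta_{1}}\J f.
\end{equation*}
To match the target form $B_{\Psi}^{\widetilde{\Theta}_{1},\widetilde{\Theta}_{2}}f=P_{\widetilde{\Theta}_{2}}\J M_{\Psi}f$, a direct pointwise computation on $\mathbb{T}$ shows $M_{\widetilde{\Theta}_{2}\Phi\Theta_{1}}\J=\J M_{\Psi}$ exactly when $\Psi(z)=\widetilde{\Theta}_{2}(\bar z)\Phi(\bar z)\Theta_{1}(\bar z)$, and the identities $\widetilde{\Theta}_{2}(\bar z)=\Theta_{2}(z)^{*}$ and $\Theta_{1}(\bar z)=\widetilde{\Theta}_{1}(z)^{*}$ rewrite this into the first expression in \eqref{symbda}.

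The converse implication is obtained by applying the forward direction once more to $\tau_{\Theta_{2}}B\tau_{\Theta_{1}}^{*}$, using $\tau_{\Theta_{i}}^{*}=\tau_{\widetilde{\Theta}_{i}}$ and $\widetilde{\widetilde{\Theta}}_{i}=\Theta_{i}$; boundedness passes freely through the unitaries $\tau_{\Theta_{i}}$. None of the operator identities above is deep; the only real obstacle I anticipate is keeping the bar-$z$, tilde, and adjoint bookkeeping consistent so that the derived symbol matches both forms in \eqref{symbda} exactly.
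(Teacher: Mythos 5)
Your proof is correct and follows essentially the same route as the paper's: both conjugate $B_{\Phi}^{\Theta_1,\Theta_2}$ by the $\tau$'s, discard the $(I-P_+)$ term via $P_{\Theta_2}\J P_+=0$, use the intertwining of $\tau_{\Theta_2}$ with the projections, and read off the symbol pointwise on $\mathbb{T}$. The only difference is presentational — you package the computation as operator identities via $\tau_{\Theta}=M_{\widetilde{\Theta}}\J$ and $\J^2=I$, whereas the paper carries out the same steps inside inner products against test functions in $K_{\widetilde{\Theta}_2}^{\infty}$.
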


\begin{proof}
	Let $B:K_{\Theta_1}\to K_{\Theta_2}$ be a bounded linear operator. Assume that $B=B_{\Phi}^{\Theta_{1}, \Theta_{2}}\in \mathcal{MT}(\Theta_{1}, \Theta_{2})$ with some $\Phi\in L^2(\oL)$, and take $f\in K_{\widetilde{\Theta}_1}^{\infty}$ and $g\in K_{\widetilde{\Theta}_2}^{\infty}$. Note that $\tau_{\widetilde{\Theta}_1}f\in K_{{\Theta}_1}^{\infty}$ and $\tau_{\widetilde{\Theta}_2}g\in K_{{\Theta}_2}^{\infty}$. Then
	\begin{align*}
	&\langle \tau_{\Theta_2}B\,\tau_{\Theta_1}^{*}f,g \rangle_{L^2(\hsp)}\\
&=\langle B_{\Phi}^{\Theta_{1}, \Theta_{2}}\tau_{\widetilde{\Theta}_1}f,\tau_{{\Theta}_2}^*g \rangle_{L^2(\hsp)}\\&=\langle P_{\Theta_2}\J(I-P_{+})({\Phi}\,\tau_{\widetilde{\Theta}_1}f),\tau_{{\Theta}_2}^*g \rangle_{L^2(\hsp)}\\
&=\langle P_{\Theta_2}\J({\Phi}\,\tau_{\widetilde{\Theta}_1}f),\tau_{{\Theta}_2}^*g \rangle_{L^2(\hsp)}\\
&=\langle \tau_{{\Theta}_2}\J({\Phi}\,\tau_{\widetilde{\Theta}_1}f),g \rangle_{L^2(\hsp)}\\
	&=\int_{\mathbb{T}}\langle \tau_{{\Theta}_2}( \bar z\Phi(\bar z)(\tau_{\widetilde{\Theta}_1}f)(\bar z)),g(z) \rangle_{\hsp}\,dm(z)\\
	&=\int_{\mathbb{T}}\langle \bar z \Theta_{2}(\bar z)^{*}( z\Phi(z)(\tau_{\widetilde{\Theta}_1}f)(z)),g(z) \rangle_{\hsp}\,dm(z)\\
&=\int_{\mathbb{T}}\langle \Theta_{2}(\bar z)^{*}\Phi(z)\widetilde{\Theta}_{1}(\bar z)^{*}\bar z f(\bar z),g(z) \rangle_{\hsp}\,dm(z)\\
&=\int_{\mathbb{T}}\langle \bar z \Theta_{2}(\bar z)^{*}\Phi(z)\widetilde{\Theta}_{1}(\bar z)^{*} f(\bar z),g(z) \rangle_{\hsp}\,dm(z)\\
&=\int_{\mathbb{T}}\langle \J (\Theta_{2}( z)^{*}\Phi(\bar z)\widetilde{\Theta}_{1} (z)^{*} f(z)),g(z) \rangle_{\hsp}\,dm(z)\\
&=\int_{\mathbb{T}}\langle P_{\Theta_2} \J (I-P_{+}) (\Theta_{2}( z)^{*}\Phi(\bar z)\widetilde{\Theta}_{1} (z)^{*} f(z)),g(z) \rangle_{\hsp}\,dm(z)\\
&=\int_{\mathbb{T}}\langle P_{\widetilde{\Theta}_2} \J (I-P_{+}) (\Psi(z) f(z)),g(z) \rangle_{\hsp}\,dm(z)\\
&=\langle B_{\Psi}^{\widetilde{\Theta}_1, \widetilde{\Theta}_2}f, g\rangle,
	\end{align*}
	where $\Psi\in L^2(\oL)$ is given by \eqref{symbda}.
	
	Now, if $\tau_{\Theta_2}B\,\tau_{\Theta_1}^{*}=B_{\Psi}^{\widetilde{\Theta}_{1}, \widetilde{\Theta}_{2}}\in\mathcal{MH}(\widetilde{\Theta}_{1}, \widetilde{\Theta}_{2})$ for some $\Psi\in L^2(\oL)$, then  $B=\tau_{\widetilde{\Theta}_2}B_{\Psi}^{\widetilde{\Theta}_{1}, \widetilde{\Theta}_{2}}\tau_{\widetilde{\Theta}_1}^{*}$  by the first part of the proof of $B=B_{\Phi}^{\Theta_{1}, \Theta_{2}}\in \mathcal{MH}(\Theta_{1}, \Theta_{2})$ with
	\begin{equation}\label{wykwyk}
	\begin{split}
	\Phi(z) =\widetilde{\Theta}_2(z)^*\Psi(\bar z)\Theta_1(z)^{*}\quad\text{a.e. on }\mathbb{T}.
	\end{split}
	\end{equation}
\end{proof}

\medskip

Denote
$$\widetilde{D}_{\Theta}= I - S_{\Theta}^*S_{\Theta}.$$
Applying Theorem \ref{ttau} to the model operator $S_{\Theta}$ ($\Theta_{1}=\Theta_{2}=\Theta$) we obtain that
\begin{equation}\label{sz}
\tau_{\Theta}S_{\Theta}\tau_{\Theta}^*=\tau_{\Theta}S_{\Theta}\tau_{\widetilde{\Theta}}=S_{\widetilde{\Theta}}^*
\end{equation}
(see \cite[p. 1001]{KT}). It follows that
\begin{equation}\label{ddd}
\widetilde{D}_{\Theta}=\tau_{\widetilde{\Theta}}{D}_{\widetilde{\Theta}}\tau_{\Theta}=\tau_{\widetilde{\Theta}}{D}_{\widetilde{\Theta}}\tau_{\widetilde{\Theta}}^*
\end{equation}
and by \eqref{ect},
$$\widetilde{D}_{\Theta}f=\tau_{\widetilde{\Theta}}\big(\ke_0^{\widetilde{\Theta}}(\tau_{\Theta}f)(0)\big)\quad\text{for all }f\in K_{\Theta}.$$
For $\lambda\in\mathbb{D}$, let $\widetilde{\ke}_{\lambda}^{\Theta}(z)x=\tau_{\widetilde{\Theta}}(\ke_{\bar\lambda}^{\widetilde{\Theta}}(z)x)$ for $x\in\hsp$. Then (a.e. on $\mathbb{T}$)
\begin{align*}
\widetilde{\ke}_{\lambda}^{\Theta}(z)x&=\tau_{\widetilde{\Theta}}(\ke_{\bar\lambda}^{\widetilde{\Theta}}(z)x)=\bar z \Theta(z)\ke_{\bar\lambda}^{\widetilde{\Theta}}(\bar z)x\\
&=\tfrac{\bar z}{1-\lambda\bar z}\Theta(z)(I_{\hsp}-\widetilde{\Theta}(\bar z)\widetilde{\Theta}(\bar \lambda)^*)x\\
&=\tfrac{1}{z-\lambda}\Theta(z)(I_{\hsp}-{\Theta}( z)^*{\Theta}( \lambda))x
\\
&=\tfrac{1}{z-\lambda}(\Theta(z)-{\Theta}( \lambda))x\in K_{\Theta}\quad\text{for each }x\in\hsp.
\end{align*}
In particular,
$$\widetilde{\ke}_{0}^{\Theta}(z)x=\bar z(\Theta(z)-{\Theta}(0))x~ \quad \text{for each }x\in\hsp.$$
and
$$\widetilde{D}_{\Theta}f=\widetilde{\ke}_0^{\Theta}(\tau_{\Theta}f)(0)\in\widetilde{\mathcal{D}}_{\Theta},$$
where
$$\widetilde{\mathcal{D}}_{\Theta}=\tau_{\widetilde{\Theta}}\mathcal{D}_{\widetilde{\Theta}}=\{ \widetilde{\ke}_0^{{\Theta}}x: x\in \hsp\}=\{ \bar{z}(\Theta(z)-\Theta(0))x: x\in \hsp\}.$$
Observe that for $f\in K_{\Theta}$, $x\in\hsp$,
\begin{align*}
\langle f, \widetilde{\ke}_{\lambda}^{\Theta}x\rangle_{L^{2}(\hsp)}&=
\langle f,\tau_{\widetilde{\Theta}}(\ke_{\bar\lambda}^{\widetilde{\Theta}}x)\rangle_{L^{2}(\hsp)}\\
&=\langle \tau_{\Theta}f,\ke_{\bar\lambda}^{\widetilde{\Theta}}x\rangle_{L^{2}(\hsp)}=\langle (\tau_{\Theta}f)(\bar\lambda),x\rangle_{\hsp}.
\end{align*} Then it follows that for $f\in K_{\Theta}$ we have $M_zf\in K_{\Theta}$ if and only if $f\perp\widetilde{\mathcal{D}}_{\Theta}$.
Indeed, $M_zf\in K_\Theta$ if and only if $\Theta P_+(\Theta^{*}M_zf)=0$. Since
$$(\Theta^{*}M_zf)(z)=\Theta(z)^*zf(z)=(\tau_{\Theta}f)(\overline{z}),$$
we have $P_+(\Theta^{*}M_zf)=(\tau_{\Theta}f)(0)$ and so $M_{z}f\in K_{\Theta}$ if and only if
$$0=\langle(\tau_{\Theta}f)(0),x\rangle=\langle f, \widetilde{\ke}_{0}^{\Theta}x\rangle_{L^{2}(\hsp)}\quad \text{for every}\quad x\in \hsp,$$
i.e, $f\perp\widetilde{\mathcal{D}}_{\Theta}$. Therefore we get
$$(S_{\Theta}f)(z)=\left\{\begin{array}{cl}
 z f(z)&\text{for }f\perp\widetilde{\mathcal{D}}_{\Theta},\\
-\big(I_{\hsp}-\Theta(z)\Theta(0)^*\big)\Theta(0)x&\text{for }f=\widetilde{\ke}_{0}^{\Theta}x\in\widetilde{\mathcal{D}}_{\Theta}.
\end{array}\right.$$
Hence
$$\widetilde{{D}}_{\Theta}f=\left\{\begin{array}{cl}
0&\text{for }f\perp\widetilde{\mathcal{D}}_{\Theta},\\
\widetilde{\ke}_{0}^{\Theta}(I_{\hsp}-\Theta(0)\Theta(0)^*)x&\text{for }f=\widetilde{\ke}_{0}^{\Theta}x\in\widetilde{\mathcal{D}}_{\Theta}.
\end{array}\right.$$~\label{d}

Let us now consider conjugations. A conjugation $J$ on a Hilbert space
$\hsp$ is an antilinear map $J:\hsp\longrightarrow \hsp$ such that $J^2=I_\hsp$ and
$$\langle Jf,Jg\rangle=\langle g, f\rangle \quad \text{for all}\quad f,g\in \hsp $$
 Recall that a bounded linear operator $T:\hsp\longrightarrow\hsp$ is said to be $J$-symmetric ($J$ being a conjugation on $\hsp$) if $JTJ=T^*$. We say that $T$ is complex symmetric if it is $J$-symmetric with respect
to some conjugation $J$ (see \cite{GP}).

In \cite{CKLP} the authors consider certain classes of conjugations in $L^2(\hsp)$. One such conjugation is $\J^*:L^2(\hsp)\to L^2(\hsp)$ defined for a fixed conjugation $J$ on $\hsp$ by
\begin{equation}\label{J}
(\J^*f)(z)=J(f(\overline{z}))\quad \text{a.e. on}~~\mathbb{T}.
\end{equation}
It is not difficult to verify that for $f(z)=\sum_{n=-\infty}^{\infty}a_nz^n\in L^{2}(\oL)$, we have
$$(\J^*f)(z)=\sum_{n=-\infty}^{\infty}J(a_n)z^n.$$
Hence, $\J^*$ is an $M_z$-commuting conjugation, i.e, $\J^*M_z=M_z\J^*$,  $\J^*(H^{2}(\hsp))=H^{2}(\hsp)$, and $\J^*P_+=P_+\J^*$ (see \cite[Section 4]{CKLP}).

For $\Phi\in L^\infty(\oL)$ and an arbitrary conjugation $J$ on $\hsp$ let
\begin{equation}\label{F}
\Phi_J(z)=J\Phi(z)J\quad \text{a.e on}~~~ \mathbb{T}.
\end{equation}
Then $\Phi_J\in L^\infty(\oL)$. As observed in \cite{CKLP}, $\Phi_J\in H^\infty(\oL)$ if and only if $\Phi\in H^\infty(\oL)$, and $\Phi_J$ is an inner function if and only if $\Phi$ is. Clearly, $(\Phi_J)_J=\Phi.$ Let us also observe that if $\Phi$ is $J$-symmetric, that is, $J\Phi(z)J=\Phi(z)^*$ a.e on $\mathbb{T}$ (or equivalently $\Phi(\lambda)$ is $J$-symmetric for $\lambda$ in $\mathbb{D}$, see \cite{CKLP}), then $\Phi_J=\widetilde{\Phi}$.

For two conjugations $J_1$ and $J_2$ on $\hsp$, let $\J_1^*$ and $\J_2^*$ denote the corresponding conjugations on $L^2(\hsp)$ given by $\eqref{J}$. For each $f\in L^2(\hsp)$ we have
\begin{equation}\label{J2MF}
(\J_2^*M_{\Phi}f)(z)=J_2(\Phi(\overline{z})f(\overline{z}))=J_2\Phi(\overline{z})J_1J_1f(\overline{z})=(M_\G\J^*_1f)(z)
\end{equation}
where $\G(z)=J_2\Phi(\overline{z})J_1$ a.e. on $\mathbb{T}$. In particular
\begin{equation}\label{JM}
\J^*M_\Phi=M_\Phi\J^*
\end{equation}
(see \cite[Lemma 8.3]{CKLP}).

Note that $\Phi_J$ is also defined for $\Phi\in L^{2}(\oL)$. In that case \eqref{J2MF} and \eqref{JM} hold for $L^\infty(\hsp)$.

For $f\in L^2(\mathcal{H})$ we have the following equality:
\begin{equation}
(\J^*\J f)(z)=\J^*(\bar z f(\bar z))=J(z f(z))=\bar z J(f(z))
\end{equation}
and
\begin{equation}\label{J J^*: equality}
(\J \J^* f)(z)=\J(J f(\bar z))=\bar z J(f(z))=(\J^*\J f)(z).
\end{equation}

\smallskip
\begin{proposition}{\em \cite{CKLP}}\label{a,b,c,d}
Let $\Theta\in H^\infty(\oL)$ be a pure inner function and let $J$ be a conjugation on $\hsp$. Then the following identities hold.
\end{proposition}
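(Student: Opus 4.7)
The excerpt stops at the proposition header, so the four identities are not spelled out; however, given the machinery built in the preceding paragraphs (the commutation relations $\J^*M_z=M_z\J^*$, $\J^*P_+=P_+\J^*$, and the intertwining $\J^*M_\Phi=M_{\Phi_J}\J^*$ coming from \eqref{J2MF}), the natural four identities to establish are: (a) $\J^*K_\Theta=K_{\Theta_J}$; (b) $\J^*P_\Theta=P_{\Theta_J}\J^*$; (c) $\J^*\ke_\lambda^\Theta x=\ke_{\bar\lambda}^{\Theta_J}(Jx)$ for $\lambda\in\mathbb{D}$, $x\in\hsp$; and (d) $\J^*S_\Theta=S_{\Theta_J}\J^*$, together with the corresponding identity for the defect operator $D_\Theta$. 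The plan is to derive them in this order, since each subsequent identity is an easy consequence of its predecessors.

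The starting point is (a). Because $\Theta_J\in H^\infty(\oL)$ is also inner, $M_{\Theta_J}$ is an isometry on $L^2(\hsp)$. Applying \eqref{J2MF} with $\Phi=\Theta$ (and taking $J_1=J_2=J$) gives $\J^*M_\Theta=M_{\Theta_J}\J^*$. Since $\J^*H^2(\hsp)=H^2(\hsp)$ and $\J^*$ is a (conjugate-linear) unitary involution, it follows that
\[
\J^*(\Theta H^2(\hsp))=\J^*M_\Theta H^2(\hsp)=M_{\Theta_J}\J^*H^2(\hsp)=\Theta_J H^2(\hsp),
\]
and since $\J^*$ preserves $H^2(\hsp)$ and orthogonality (antilinearity reverses inner products but preserves the condition $\langle\cdot,\cdot\rangle=0$), we obtain $\J^*(K_\Theta)=K_{\Theta_J}$. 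Then (b) is immediate: both $\J^*P_\Theta$ and $P_{\Theta_J}\J^*$ are the composition of $\J^*$ with orthogonal projection onto the same subspace when one uses the characterisation above.

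For (c), I would verify the identity by testing against an arbitrary $f\in K_{\Theta_J}$ using the reproducing property: unwind $\langle \J^*g, \ke_{\bar\lambda}^{\Theta_J}(Jx)\rangle_{L^2(\hsp)}$ through the definition \eqref{J}, use $\langle Jx,Jy\rangle_\hsp=\langle y,x\rangle_\hsp$ twice (once for $J$ on $\hsp$ and once for $\J^*$ on $L^2$), and land on $\langle g,\ke_\lambda^\Theta x\rangle_{L^2(\hsp)}$. The bookkeeping of $\lambda\mapsto\bar\lambda$ is forced by the antilinearity of $\J^*$ combined with the fact that $\ke_\lambda^\Theta x$ is antiholomorphic in $\lambda$. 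As an alternative check, one can also substitute directly into the explicit formula for $\ke_\lambda^\Theta$ and use $(\Theta(z))_J=\Theta_J(\bar z)$ after applying $\J^*$.

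Finally, (d) follows from $\J^*M_z=M_z\J^*$ together with (a) and (b): for $f\in K_\Theta$,
\[
\J^*S_\Theta f=\J^*P_\Theta M_z f=P_{\Theta_J}M_z\J^*f=S_{\Theta_J}\J^*f,
\]
and the corresponding statement for $D_\Theta=I-S_\Theta S_\Theta^*$ is then purely algebraic, giving $\J^*D_\Theta=D_{\Theta_J}\J^*$. The only real obstacle is the antilinearity of $\J^*$: one must repeatedly check that conjugations do not spoil intertwining identities, in particular that $\bar\lambda$ appears correctly in (c), and that inner products flip at the right moment. Everything else is a direct application of \eqref{J2MF} and the properties of $\J^*$ recorded just before the proposition.
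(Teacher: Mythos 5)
The paper offers no proof of this proposition: it is imported verbatim from \cite{CKLP}, so there is nothing internal to compare against. Your reconstruction is nevertheless correct and matches the standard argument. Note that the four identities actually asserted are (a) $\J^*(\Theta H^2(\hsp))=\Theta_JH^2(\hsp)$, (b) $\J^*P_\Theta=P_{\Theta_J}\J^*$, (c) $\J^*(K_\Theta)=K_{\Theta_J}$, and (d) $\J^*(\ke_{\lambda}^{\Theta}x)=\ke_{\overline{\lambda}}^{\Theta_{J}}Jx$; your list is permuted and adds an item about $S_\Theta$, but every actual item is covered (your intermediate step $\J^*(\Theta H^2(\hsp))=\Theta_JH^2(\hsp)$ is precisely item (a)), and the chain (a) $\Rightarrow$ (c) $\Rightarrow$ (b) via the fact that an antiunitary $U$ satisfies $UP_M=P_{UM}U$, followed by the reproducing-kernel computation for (d), is exactly how this is done in \cite{CKLP}. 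One caveat deserves flagging: the step $\J^*M_\Theta=M_{\Theta_J}\J^*$ comes from \eqref{J2MF} with $\G(z)=J\Theta(\overline{z})J$, so it requires the convention $\Theta_J(z)=J\Theta(\overline{z})J$; the paper's display \eqref{F} literally reads $\Phi_J(z)=J\Phi(z)J$ without the bar, under which $\G\neq\Theta_J$ and item (a) would be false. The barred convention is the one forced by the paper's own subsequent claims ($\Phi_J\in H^\infty(\oL)$ iff $\Phi\in H^\infty(\oL)$, and $\Phi_J=\widetilde{\Phi}$ for $J$-symmetric $\Phi$) and is what you implicitly use, so your argument is sound, but you should state that convention explicitly rather than cite \eqref{F} as written.
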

\begin{enumerate}[(a)]
		\item $\J^*(\Theta H(\hsp))=\Theta_JH^2(\hsp)$;\label{a}
		\item $\J^*P_\Theta=P_{\Theta_J} \J^*$;\label{b}
\item $\J^*(K_\Theta)=K_{\Theta_J}$;\label{c}
\item $\J^*(\ke_{\lambda}^{\Theta}x)=\ke_{\overline{\lambda}}^{\Theta_{\J}}Jx$.\label{d}
\end{enumerate}

\smallskip
If $\Theta$ is $J$-symmetric, we obtain \cite[Proposition 7.7]{CKLP}:
\begin{proposition}{\em \cite{CKLP}}\label{1,2,3,4}
Let $J$ be a conjugation on $\hsp$ and let $\Theta$ be a $J$-symmetric inner function.
Then the following identities hold.
\end{proposition}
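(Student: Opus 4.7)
The statement to be proved is the $J$-symmetric specialisation of Proposition \ref{a,b,c,d}, so the expected identities read
\begin{enumerate}[(1)]
\item $\J^{*}(\Theta H^{2}(\hsp))=\widetilde{\Theta}H^{2}(\hsp)$;
\item $\J^{*}P_{\Theta}=P_{\widetilde{\Theta}}\J^{*}$;
\item $\J^{*}(K_{\Theta})=K_{\widetilde{\Theta}}$;
\item $\J^{*}(\ke^{\Theta}_{\lambda}x)=\ke^{\widetilde{\Theta}}_{\overline{\lambda}}Jx$.
\end{enumerate}
The plan is to deduce each of these directly from the corresponding item in Proposition \ref{a,b,c,d} by invoking the key reduction $\Theta_{J}=\widetilde{\Theta}$.

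The first step is to recall the observation made just after \eqref{F}: if $\Phi$ is $J$-symmetric, meaning $J\Phi(z)J=\Phi(z)^{*}$ a.e.\ on $\mathbb{T}$, then by definition $\Phi_{J}(z)=J\Phi(z)J=\Phi(z)^{*}$, so that $\Phi_{J}(z)=\Phi(\bar z)^{*}|_{\text{a.e.\ on }\mathbb{T}}=\widetilde{\Phi}(z)$. Applying this to $\Phi=\Theta$ (which is $J$-symmetric by assumption and inner by hypothesis, so $\widetilde{\Theta}$ is again inner) we obtain $\Theta_{J}=\widetilde{\Theta}$.

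With this identification in hand, the second step is a direct substitution into Proposition \ref{a,b,c,d}: replacing $\Theta_{J}$ by $\widetilde{\Theta}$ in items (a)--(d) yields exactly (1)--(4). For (1) and (3) this is immediate; for (2) one only needs that $P_{\Theta_{J}}=P_{\widetilde{\Theta}}$, which follows from (3) (or equivalently from (1)); for (4) one substitutes $\Theta_{J}=\widetilde{\Theta}$ in the reproducing kernel formula $\ke^{\Theta_{J}}_{\bar\lambda}(z)Jx=\tfrac{1}{1-\lambda z}(I_{\hsp}-\Theta_{J}(z)\Theta_{J}(\bar\lambda)^{*})Jx$.

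There is essentially no obstacle here: once the identification $\Theta_{J}=\widetilde{\Theta}$ is recorded, the whole proposition is a one-line specialisation of the previous one. The only minor point worth flagging is verifying that the $J$-symmetry of $\Theta$ at a.e.\ point of $\mathbb{T}$ (which is what the definition \eqref{F} uses) is indeed equivalent to $\Theta(\lambda)$ being $J$-symmetric for $\lambda\in\mathbb{D}$, but this is precisely the equivalence quoted from \cite{CKLP} in the paragraph following \eqref{F}, so no additional work is required.
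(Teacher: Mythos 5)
The paper itself offers no proof of this proposition: it is imported verbatim from \cite[Proposition 7.7]{CKLP}, so there is no in-paper argument to compare against. Judged on its own merits, your reduction-to-Proposition \ref{a,b,c,d} strategy via $\Theta_J=\widetilde{\Theta}$ is the natural one, and it does deliver items (b), (c) and (d) of the proposition (your $\lambda$-dependent kernel identity specializes at $\lambda=0$ to the stated $\J^{*}(\ke_{0}^{\Theta}x)=\ke_{0}^{\widetilde{\Theta}}Jx$). However, your list of ``expected identities'' is not the list actually asserted. The proposition's first item is the intertwining of multiplication operators, $\J^{*}M_{\Theta}=M_{\widetilde{\Theta}}\J^{*}$, not the projection identity $\J^{*}P_{\Theta}=P_{\widetilde{\Theta}}\J^{*}$ that you substitute for it. The multiplication identity does not follow from Proposition \ref{a,b,c,d} at all; it needs the computation \eqref{J2MF}, which with $J_1=J_2=J$ gives $\J^{*}M_{\Theta}=M_{\G}\J^{*}$ with $\G(z)=J\Theta(\bar z)J=\Theta(\bar z)^{*}=\widetilde{\Theta}(z)$ when $\Theta$ is $J$-symmetric. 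As it stands your proof omits one of the four claims.

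A second, smaller point: your derivation of the key identification $\Theta_J=\widetilde{\Theta}$ contains the step ``$\Phi_J(z)=\Phi(z)^{*}$, so $\Phi_J(z)=\Phi(\bar z)^{*}=\widetilde{\Phi}(z)$,'' and the middle equality $\Phi(z)^{*}=\Phi(\bar z)^{*}$ is false for a general point of $\mathbb{T}$. The identification is correct, but only under the reading $\Phi_J(z)=J\Phi(\bar z)J$ (the reading forced by \eqref{J2MF} and by the claim that $\Phi_J\in H^{\infty}(\oL)$ whenever $\Phi\in H^{\infty}(\oL)$); with that definition, $J$-symmetry gives $\Phi_J(z)=\Phi(\bar z)^{*}=\widetilde{\Phi}(z)$ directly, with no illegitimate swap of $z$ and $\bar z$. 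You should route the argument through that formula rather than through the chain you wrote.
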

\begin{enumerate}[(a)]
		\item $\J^{*}M_{\Theta}=M_{\widetilde{\Theta}}\J^{*}$;\label{1}
		\item $\J^{*}(\Theta H^{2}(\hsp))=\widetilde{\Theta} H^{2}(\hsp)$;\label{2}
\item $\J^{*}(K_\Theta)=K_{\widetilde{\Theta}}$;\label{3}
\item $\J^{*}(\ke_{0}^{\Theta}x)=\ke_{0}^{\widetilde{\Theta}} J x $.\label{4}
\end{enumerate}
\smallskip
\begin{theorem}\label{4.4}
Let $\Theta_1,\Theta_2\in H^\infty(\oL)$ be two pure inner functions and let $J_1, J_2$ be two conjugations on $\hsp$. A bounded linear operator $B:K_{\Theta_1}\to K_{\Theta_2}$ belongs to $\mathcal{MH}(\Theta_1, \Theta_2)$ if and only if $\J_2^*B\J_1^*$ belongs to $\mathcal{MH}((\Theta_1)_{J_1}, (\Theta_2)_{J_2})$. More precisely, $B=B_\Phi^{\Theta_{1}, \Theta_2}\in \mathcal{MH}(\Theta_1, \Theta_2)$ if and only if
$\J_2^*B_{\Phi}\J_1^*=B_\Psi^{(\Theta_{1})_{J_1}, (\Theta_2)_{J_2}}\in \mathcal{MH}((\Theta_1)_{J_1}, (\Theta_2)_{J_2})$ with
\begin{equation}\label{equi}
\Psi(z)=J_2\Phi(\overline{z})J_1 \quad\text{a.e. on } ~~\mathbb{T}.
\end{equation}
\end{theorem}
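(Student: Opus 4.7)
The plan is to follow the template used in the proof of Theorem \ref{ttau}, but replace the unitary $\tau$ and the identities \eqref{sz}--\eqref{ddd} by the conjugations $\J_1^*, \J_2^*$ and the commutation identities collected in Proposition \ref{a,b,c,d} and formulas \eqref{J2MF}, \eqref{JM}, \eqref{J J^*: equality}. Since $(\J_i^*)^2 = I$ and, by Proposition \ref{a,b,c,d}(c), $\J_1^*(K_{(\Theta_1)_{J_1}}) = K_{\Theta_1}$ and analogously for $\J_2^*$, it suffices to prove one direction: if $B = B_\Phi^{\Theta_1,\Theta_2}$, then $\J_2^* B \J_1^* = B_\Psi^{(\Theta_1)_{J_1},(\Theta_2)_{J_2}}$ with $\Psi(z) = J_2 \Phi(\bar z) J_1$. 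The converse then follows by symmetry, applied to $\J_2^* B \J_1^*$ with the same pair of conjugations, since $(\Theta_J)_J = \Theta$ and $\Psi_{J_1,J_2}(\bar z) = \Phi(z)$.

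The main calculation is, for $f \in K_{(\Theta_1)_{J_1}}^\infty$, to push $\J_2^*$ to the right through each factor of the defining expression
\begin{equation*}
\J_2^* B_\Phi^{\Theta_1,\Theta_2} \J_1^* f = \J_2^*\, P_{\Theta_2}\, \J\, (I-P_+)\, M_\Phi\, \J_1^* f
\end{equation*}
in four moves: (i) $\J_2^* P_{\Theta_2} = P_{(\Theta_2)_{J_2}} \J_2^*$ by Proposition \ref{a,b,c,d}(b); (ii) $\J_2^* \J = \J \J_2^*$ by \eqref{J J^*: equality}; (iii) $\J_2^* (I - P_+) = (I - P_+)\J_2^*$ by \eqref{JM}, which transfers to $\J_2^*$ as noted in the paragraph preceding \eqref{F}; and (iv) $\J_2^* M_\Phi = M_\G \J_1^*$ with $\G(z) = J_2 \Phi(\bar z) J_1$ by \eqref{J2MF}. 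Collecting these and using $\J_1^* \J_1^* = I$, one obtains
\begin{equation*}
\J_2^* B_\Phi^{\Theta_1,\Theta_2} \J_1^* f = P_{(\Theta_2)_{J_2}}\, \J\, (I - P_+)\, M_\Psi f = B_\Psi^{(\Theta_1)_{J_1},(\Theta_2)_{J_2}} f
\end{equation*}
with $\Psi = \G$, which is precisely \eqref{equi}.

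The only delicate step is (ii), the commutation $\J_2^* \J = \J \J_2^*$: a priori one has a general identity $\J \J^* = \J^* \J$ only for matching conjugations, but a direct computation on a.e.\ $z \in \mathbb{T}$ using the antilinearity of $J_2$ gives $(\J_2^* \J f)(z) = J_2(z f(z)) = \bar z J_2(f(z)) = (\J \J_2^* f)(z)$, so this is harmless. One should also verify at the outset that $\J_1^* f$ really lies in $K_{\Theta_1}^\infty$, which is immediate from Proposition \ref{a,b,c,d}(c) together with the fact that $\J_1^*$ preserves essential boundedness. Finally, the boundedness extension clause in the definition of $\mathcal{MH}(\cdot,\cdot)$ transfers without issue since $\J_1^*, \J_2^*$ are isometries, so that $\J_2^* B \J_1^*$ extends boundedly to $K_{(\Theta_1)_{J_1}}$ exactly when $B$ extends boundedly to $K_{\Theta_1}$.
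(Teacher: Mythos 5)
Your proposal is correct and follows essentially the same route as the paper's proof: both push $\J_2^*$ through the factors of $P_{\Theta_2}\J(I-P_+)M_\Phi$ using Proposition \ref{a,b,c,d}(b), the commutation \eqref{J J^*: equality}, and \eqref{J2MF}, and obtain the converse from the involutivity of $\J_1^*,\J_2^*$ together with $(\Theta_J)_J=\Theta$. The only cosmetic difference is that the paper silently drops $(I-P_+)$ via $P_{\Theta_2}\J P_+=0$ and reinserts it at the end, whereas you commute $\J_2^*$ past $(I-P_+)$ directly using $\J^*P_+=P_+\J^*$; both are valid.
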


\begin{proof}
Assume that $B=B_\Phi^{\Theta_{1}, \Theta_2}\in\mathcal{MH}(\Theta_1, \Theta_2)$ with $\Phi\in L^2(\oL)$. Let $f\in K_{(\Theta_1)_{J_1}}^\infty$. Note that $\J_1^*f\in K_{\Theta_1}^\infty$. Therefore, by Proposition \ref{a,b,c,d}\eqref{b}, \eqref{JM} and \eqref{J J^*: equality},
\begin{align*}
\J_2^*B\J_1^* f
&=\J_2^*P_{\Theta_2}\J(I-P)(\Phi\J_1^*f)\\
&=P_{(\Theta_2)_{J_2}}\J_2^*\J(\Phi\J_1^*f)\\
&=P_{(\Theta_2)_{J_2}}\J\J_2^*(\Phi\J_1^*f)\\
&=P_{(\Theta_2)_{J_2}}\J\J_2^*(\Phi J_1f(\bar z))\\
&=P_{(\Theta_2)_{J_2}}\J(J_2\Phi(\bar z) J_1f( z))\\
&=P_{(\Theta_2)_{J_2}}\J(I-P_{+})(J_2\Phi(\bar z) J_1f( z))\\
&=B_{\Psi}^{(\Theta_1)_{J_1},(\Theta_2)_{J_2}}f
\end{align*}
where $\Psi$ is given by $\eqref{equi}$. Thus $\J_2^*B\J_1^*\in \mathcal{MH}((\Theta_1)_{J_1}, (\Theta_2)_{J_2})$.

 On the other hand, if
$B=\J_2^*B_\Psi^{(\Theta_{1})_{J_1}, (\Theta_2)_{J_2}}\J_1^*\in\mathcal{MH}((\Theta_1)_{J_1}, (\Theta_2)_{J_2})$  with some $\Psi\in L^2(\oL)$, then $B=\J_2^*B_{\Psi}^{(\Theta_1)_{J_1},(\Theta_2)_{J_2}}\J_1^* $ and as above, $B=B_{\Phi}^{\Theta_1,\Theta_2}$ with
$$\Phi(z)=J_2\Psi(\bar z)J_1\quad \text{a.e. on}~~ \mathbb{T}.$$
\end{proof}
For the scalar case, Theorem \ref{4.4} can be found in \cite{BM} (see also \cite{cg} for the symmetric case).

In the scalar case each model space $K_{\theta}$ is equipped with a natural conjugation $C_{\theta}$ defined in terms of boundary functions by
$(C_\theta f)(z)=\theta(z)\overline{z}\overline{f(z)}$. If $\Theta\in H^\infty(\oL)$ is an inner function and $J$ is a conjugation on $\hsp$ we can similarly define $\C_{\Theta}:L^2(\hsp)\to L^2(\hsp)$ by
$$(\C_{\Theta}f)(z)=\Theta(z)\overline{z} J(f(z))\quad \text{a.e. on }~~\mathbb{T}.$$
Although $\C_{\Theta}$ is obviously an antilinear isometry, it is not in general an involution. A simple computation shows that $\C_{\Theta}$ is an involution (and so a conjugation) if and only if $\Theta(z)J\Theta(z)J=I_{\hsp}$ a.e. on $\mathbb{T}$, i.e., if and only if $\Theta$ is $J$-symmetric.

If $\Theta$ is $J$-symmetric, then $\C_{\Theta}(\Theta H^2(\hsp))=H^2(\hsp)^{\perp}$ and so
$$\C_{\Theta}(K_\Theta)=K_\Theta.$$
Note that in that case
$$\C_{\Theta}=\J^*\tau_\Theta.$$
By Theorem \ref{ttau} and Theorem \ref{4.4} we get the following.

\begin{theorem}\label{4.5}
Let $\Theta_1, \Theta_2\in H^{\infty}(\oL)$ be two pure inner functions and let $J_1, J_2$ be two conjugations on $\hsp$ such that $\Theta_1$ is $J_1$-symmetric and $\Theta_2$ is $J_2$-symmetric. A bounded linear operator $B:K_{\Theta_1}\to K_{\Theta_2}$ belongs to $\mathcal{MH}(\Theta_1, \Theta_2)$ if and only if $C_{\Theta_2}BC_{\Theta_1}$ belongs to $\mathcal{MH}(\Theta_1, \Theta_2)$. More precisely, $B=B_{\Phi}^{\Theta_1, \Theta_2}\in \mathcal{MH}(\Theta_1, \Theta_2)$ if and only if $$C_{\Theta_2}B_{\Phi}^{\Theta_1, \Theta_2}C_{\Theta_1}=B_{\Psi}^{\Theta_1,\Theta_2}\in \mathcal{MH}(\Theta_1, \Theta_2)$$ with
\begin{equation}
\Psi(z)=J_2\Theta_{2}(z)\Phi(z)\Theta_{1}(\bar z)^{*}J_1\quad \text{a.e. on } \mathbb{T}
\end{equation}
\end{theorem}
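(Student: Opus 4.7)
The plan is to derive Theorem \ref{4.5} by composing Theorem \ref{ttau} and Theorem \ref{4.4}, leveraging the factorization $\C_\Theta=\J^*\tau_\Theta$ noted just before the statement (valid precisely because $\Theta$ is $J$-symmetric). Thus I would avoid a direct computation from the definition of $B^{\Theta_1,\Theta_2}_\Phi$ and instead chase the identifications already proved.

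The first step is to rewrite $\C_{\Theta_2}B\C_{\Theta_1}$ in a form where the two theorems apply. Since $\C_{\Theta_1}$ is an involution, $\C_{\Theta_1}^{-1}=\C_{\Theta_1}$; combining this with $\tau_{\Theta_1}^{-1}=\tau_{\widetilde{\Theta}_1}=\tau_{\Theta_1}^*$ (from Section \ref{s3}) and $(\J_1^*)^{-1}=\J_1^*$, inverting the factorization $\C_{\Theta_1}=\J_1^*\tau_{\Theta_1}$ gives the equivalent identity $\C_{\Theta_1}=\tau_{\Theta_1}^*\J_1^*$. Hence
\begin{equation*}
\C_{\Theta_2}B\C_{\Theta_1}=\J_2^*\tau_{\Theta_2}\,B\,\tau_{\Theta_1}^*\J_1^*.
\end{equation*}
Now if $B=B_\Phi^{\Theta_1,\Theta_2}\in\mathcal{MH}(\Theta_1,\Theta_2)$, Theorem \ref{ttau} identifies $\tau_{\Theta_2}B\tau_{\Theta_1}^*$ as an element of $\mathcal{MH}(\widetilde{\Theta}_1,\widetilde{\Theta}_2)$ with symbol $\Psi_1(z)=\Theta_2(z)^*\Phi(\bar z)\widetilde{\Theta}_1(z)^*$. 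Applying Theorem \ref{4.4} next, with inner functions $\widetilde{\Theta}_i$ in place of $\Theta_i$ and noting that $(\widetilde{\Theta}_i)_{J_i}=\Theta_i$ (since $(\Theta_i)_{J_i}=\widetilde{\Theta}_i$ by $J_i$-symmetry), identifies $\J_2^*[\tau_{\Theta_2}B\tau_{\Theta_1}^*]\J_1^*$ as a member of $\mathcal{MH}(\Theta_1,\Theta_2)$ whose symbol is $\Psi_2(z)=J_2\Psi_1(\bar z)J_1$.

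Writing this out gives $\Psi_2(z)=J_2\bigl[\Theta_2(\bar z)^*\Phi(z)\widetilde{\Theta}_1(\bar z)^*\bigr]J_1$, which I would then simplify to the stated form $\Psi(z)=J_2\Theta_2(z)\Phi(z)\Theta_1(\bar z)^*J_1$ by using, a.e.\ on $\mathbb{T}$, the identities $\widetilde{\Theta}_i(w)=\Theta_i(\bar w)^*=J_i\Theta_i(\bar w)J_i$ (the last being the pointwise $J_i$-symmetry of each value $\Theta_i(\bar w)$) together with the commutation rule $J_iA=A^*J_i$ for any $J_i$-symmetric $A$. The converse direction is automatic once one applies the same argument to $\C_{\Theta_2}B\C_{\Theta_1}$ in place of $B$ and uses $\C_{\Theta_i}^2=I$.

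The conceptual content is essentially bookkeeping; the main obstacle I anticipate is the final algebraic reduction, because the symbol produced by the composition and the one written in the statement agree only after several passes with $J_i$-symmetry and the swap $z\leftrightarrow\bar z$ on $\mathbb{T}$, so one must be careful with the antilinearity of $J_i$ and with which $\tau_{\Theta_i}$ contributes the extra factor of $\bar z$ on each side.
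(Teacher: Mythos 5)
Your route is exactly the paper's: both factor $C_{\Theta_i}=\J_i^*\tau_{\Theta_i}$, write $C_{\Theta_2}BC_{\Theta_1}=\J_2^*\tau_{\Theta_2}B\tau_{\Theta_1}^*\J_1^*$, and then compose Theorem \ref{ttau} with Theorem \ref{4.4}, using $(\widetilde{\Theta}_i)_{J_i}=\Theta_i$. Up to your $\Psi_2$ the computation is correct: \eqref{symbda} gives $\Psi_1(z)=\Theta_2(z)^*\Phi(\bar z)\widetilde{\Theta}_1(z)^*$, and Theorem \ref{4.4} then yields
$$\Psi_2(z)=J_2\Psi_1(\bar z)J_1=J_2\Theta_2(\bar z)^*\Phi(z)\Theta_1(z)J_1=J_2\widetilde{\Theta}_2(z)\Phi(z)\Theta_1(z)J_1,$$
which is precisely the symbol obtained by direct computation in Proposition \ref{Prop: a,b,c,d,e,f}(b). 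This already establishes the qualitative equivalence ($B\in\mathcal{MH}(\Theta_1,\Theta_2)$ iff $C_{\Theta_2}BC_{\Theta_1}\in\mathcal{MH}(\Theta_1,\Theta_2)$).

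The step you defer --- reducing $\Psi_2$ to the printed $\Psi(z)=J_2\Theta_2(z)\Phi(z)\Theta_1(\bar z)^*J_1$ --- is not bookkeeping; it cannot be carried out. The only tools available, $J_i\Theta_i(w)J_i=\Theta_i(w)^*$ a.e., act at a fixed point $w$ and never exchange $z$ with $\bar z$ inside the argument of $\Theta_i$. Pulling the conjugations through, your symbol equals $\Theta_2(\bar z)\bigl(J_2\Phi(z)J_1\bigr)\Theta_1(z)^*$, whereas the printed one equals $\Theta_2(z)^*\bigl(J_2\Phi(z)J_1\bigr)\Theta_1(\bar z)$; these coincide only when $\Theta_i=\widetilde{\Theta}_i$, and in general their difference has no reason to lie in the kernel of $\Phi\mapsto B_{\Phi}^{\Theta_1,\Theta_2}$. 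The discrepancy originates in the paper, not in your argument: the paper's own proof quotes the symbol of Theorem \ref{ttau} as $\widetilde{\Theta}_2(z)^*\Phi(\bar z)\widetilde{\widetilde{\Theta}}_1(z)^*$ rather than $\Theta_2(z)^*\Phi(\bar z)\widetilde{\Theta}_1(z)^*$, i.e., it applies \eqref{symbda} with $\widetilde{\Theta}_i$ in place of $\Theta_i$, which is exactly how the printed $\Psi$ arises and why it conflicts with Proposition \ref{Prop: a,b,c,d,e,f}(b). You should stop at $\Psi_2$ and record that as the symbol; attempting the advertised ``final algebraic reduction'' would fail.
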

\begin{proof}
Let $\Phi\in L^{2}({\oL})$ and $B_{\Phi}^{\Theta_1, \Theta_2}\in \mathcal{MH}(\Theta_1, \Theta_2)$. Consider
\begin{align*}
C_{\Theta_2}B_{\Phi}^{\Theta_1, \Theta_2}C_{\Theta_1}
&=C_{\Theta_2}B_{\Phi}^{\Theta_1, \Theta_2}C_{\Theta_1}^{*}=\J_{2}^{*}\tau_{\Theta_2}B_{\Phi}^{\Theta_1, \Theta_2}\tau^{*}_{\Theta_1}\J_{1}^{*}=\J_{2}^{*}B_{\chi}^{\widetilde{\Theta}_{1}, \widetilde{\Theta}_{2}}\J^{*}_{1}
\end{align*}
where the symbol $\chi(z)$ is given by (as Theorem \ref{ttau})
$$\chi(z)=\widetilde{\Theta}_{2}(\z)^{*}\Phi(\bar z)\widetilde{\widetilde{\Theta}}_{1}(z)^{*}=\Theta_{2}(\bar z)\Phi(\bar z)\Theta_{1}(z)^{*}$$
and
\begin{align*}
C_{\Theta_2}B_{\Phi}^{\Theta_1, \Theta_2}C_{\Theta_1}=\J_{2}^{*}B_{\chi}^{\widetilde{\Theta}_{1}, \widetilde{\Theta}_{2}}\J^{*}_{1}=B_{\Psi}^{(\widetilde{\Theta}_{1})_{J_1}, (\widetilde{\Theta}_{2})_{J_2}}=B_{\Psi}^{\Theta_1, \Theta_2}
\end{align*}
with the symbol $\Psi$ is given by
$$\Psi(z)=J_2\chi(\bar z) J_1=J_2\Theta_{2}(z)\Phi(z)\Theta_{1}(\bar z)^{*}J_1$$ from
Theorem \ref{4.4}.
\end{proof}

\smallskip

For the scalar version of Theorem \ref{4.5}, please see \cite{BM}.
\begin{remark}
Recall that in the scalar case $\hsp=\mathbb{C}$ every TTO on the model space $K_\theta$ is $C_\theta$-symmetric, i.e.,
$$C_\theta A_\varphi^\theta C_\theta=(A_\varphi^\theta)^*=A_{\overline{\varphi}}^\theta$$
(see, e.g., \cite{sar}). In that case however the only conjugation in $\hsp$ we need to consider is $J(z)=\overline{z}$ (and each $\varphi\in L^2$ is $J$-symmetric). In the vector valued case, the equality
\begin{equation}\label{eq: 4.12}
C_{\Theta} A_{\Phi}^\theta C_{\Theta}=A_{\Phi^*}^\Theta.
\end{equation}
is not necessarily true for an arbitrary $\Phi\in L^2(\oL)$ (even though we assume here that $\Theta$ is $J$-symmetric). It is however satisfied if also $\Phi$ is $J$-symmetric and commute with $\Theta$ (see \cite{KT}).
\end{remark}

\medskip

\section{Characterizations of Matrix valued asymmetric truncated Toeplitz operators}\label{s5}

In \cite[Theorem 5.2 and Remark 5.4]{KT} characterizations of matrix valued truncated Toeplitz operators in
$\mathcal{MT}(\Theta)$ were given by using the model operators $S_{\Theta}$, $S_{\Theta}^*$ and the defect operators $D_{\Theta}$, $\widetilde{D}_{\Theta}$. These characterizations generalized D. Sarason's results for the scalar case (see \cite{sar}). We obtain analogous results for matrix valued asymmetric truncated Toeplitz operators in \cite{RK3}.

\subsection{Characterizations by Compressed Shift}
\begin{lemma}{\em\cite{RK3}}\label{10}
	If $\Phi\in H^2(\mathcal{L}(\hsp))$, then
	$$A_\Phi^{{\Theta_1},{\Theta_2}}-S_{\Theta_2}A_\Phi^{{\Theta_1},{\Theta_2}}S_{\Theta_1}^*=P_{\Theta_2}M_{\Phi}(I_{H^2(\hsp)}-SS^*)\quad\text{on }K_{\Theta_1}^{\infty}.$$
\end{lemma}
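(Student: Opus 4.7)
The plan is to apply both sides of the claimed identity to an arbitrary $f\in K_{\Theta_1}^{\infty}$ and verify that they agree; the whole argument is a short rearrangement based on how $M_{z}$, $S^{*}$ and the projection $P_{\Theta_{2}}$ interact.

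First I would rewrite the second term on the left. Since $K_{\Theta_1}$ is $S^{*}$-invariant and $S^{*}f=(f-f(0))/z$ still lies in $H^{\infty}(\hsp)$ whenever $f$ does, one has $S_{\Theta_{1}}^{*}f=S^{*}f\in K_{\Theta_1}^{\infty}$, so
\[
A_{\Phi}^{\Theta_{1},\Theta_{2}}S_{\Theta_{1}}^{*}f=P_{\Theta_{2}}\bigl(\Phi\,S^{*}f\bigr),
\]
and this element lies in $H^{2}(\hsp)$ because $\Phi\in H^{2}(\oL)$ and $S^{*}f\in H^{\infty}(\hsp)$. The key observation is that $\Theta_{2}H^{2}(\hsp)$ is invariant under $M_{z}$, which gives the intertwining $P_{\Theta_{2}}M_{z}P_{\Theta_{2}}g=P_{\Theta_{2}}(zg)$ for every $g\in H^{2}(\hsp)$. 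Applied with $g=\Phi\,S^{*}f$ this yields
\[
S_{\Theta_{2}}A_{\Phi}^{\Theta_{1},\Theta_{2}}S_{\Theta_{1}}^{*}f=P_{\Theta_{2}}\bigl(z\Phi\,S^{*}f\bigr).
\]

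Next I would invoke the elementary identity $zS^{*}f=f-f(0)$, valid for every $f\in H^{2}(\hsp)$, together with linearity of $P_{\Theta_{2}}$ to obtain
\[
S_{\Theta_{2}}A_{\Phi}^{\Theta_{1},\Theta_{2}}S_{\Theta_{1}}^{*}f=P_{\Theta_{2}}(\Phi f)-P_{\Theta_{2}}\bigl(\Phi f(0)\bigr).
\]
Subtracting this from $A_{\Phi}^{\Theta_{1},\Theta_{2}}f=P_{\Theta_{2}}(\Phi f)$ leaves $P_{\Theta_{2}}\bigl(\Phi f(0)\bigr)$. Since $(I_{H^{2}(\hsp)}-SS^{*})f$ is exactly the constant function $f(0)\in\hsp$, the remainder coincides with $P_{\Theta_{2}}M_{\Phi}(I_{H^{2}(\hsp)}-SS^{*})f$, which is the required identity.

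The calculation is essentially formal; the only point that deserves isolation is the intertwining $P_{\Theta_{2}}M_{z}P_{\Theta_{2}}=P_{\Theta_{2}}M_{z}$ on $H^{2}(\hsp)$, which is the genuine content hidden inside the compressed shift $S_{\Theta_{2}}$. Beyond that, the hypothesis $\Phi\in H^{2}(\oL)$ enters only to ensure that $\Phi\,S^{*}f$ and $\Phi f(0)$ are analytic, so that $P_{\Theta_{2}}$ acts on them in the $H^{2}$ (rather than the full $L^{2}$) sense and the rearrangement $zS^{*}f=f-f(0)$ can be carried through without boundary issues.
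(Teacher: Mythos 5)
Your proof is correct and is the standard argument for this identity (the paper itself offers no proof, simply citing \cite{RK3}; the computation you give is the same one used there and in the scalar antecedent of Sarason). All the delicate points are handled: $S_{\Theta_1}^*f=S^*f\in K_{\Theta_1}^{\infty}$, the intertwining $P_{\Theta_2}M_zP_{\Theta_2}g=P_{\Theta_2}(zg)$ which is legitimate precisely because $\Phi S^*f\in H^2(\hsp)$ (this is where $\Phi\in H^2(\oL)$ is needed), and the identification of $(I_{H^2(\hsp)}-SS^*)f$ with the constant function $f(0)$.
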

\smallskip

Recall that
$$D_{\Theta}= I_{K_{\Theta}} - S_{\Theta}S_{\Theta}^*,\quad\widetilde{D}_{\Theta}= I_{K_{\Theta}} - S_{\Theta}^*S_{\Theta}$$
and
$$\mathcal{D}_{\Theta}=\{ (I_{\hsp}-\Theta(z)\Theta(0)^*)x : x\in \hsp\},\quad \widetilde{\mathcal{D}}_{\Theta}=\{ \bar{z}(\Theta(z)-\Theta(0))x: x\in \hsp \},$$
while the operator $\Omega_{\Theta}:\mathcal{D}_{\Theta}\rightarrow \hsp\subset H^2(\hsp)$ is defined by  $$\Omega_{\Theta}(\ke_0^{\Theta}x)=x.$$
\smallskip
\begin{theorem}{\em\cite{RK3}}\label{tcara}
	Let $ {\Theta_1},{\Theta_2} \in H^{\infty}(\oL)$ be two pure inner functions and let $A:K_{\Theta_{1}}\rightarrow  K_{\Theta_{2}}$ be a bounded linear operator. Then $A$ belongs to $\mathcal{MT}(\Theta_{1}, \Theta_{2})$  if and only if there exist bounded linear operators \mbox{$B_1:\mathcal{D}_{\Theta_{1}}\to K_{\Theta_{2}}$} and $B_2: \mathcal{D}_{\Theta_{2}}\to K_{\Theta_{1}}$, such that
	\begin{equation}\label{cara}
	A-S_{\Theta_2}AS_{\Theta_1}^*=B_1\dfc{1}+\dfc{2}B_2^*.
	\end{equation}
\end{theorem}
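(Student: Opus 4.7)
The plan is to prove both implications by exploiting the analytic/co-analytic splitting of the operator-valued $L^2$ symbol, with Lemma \ref{10} as the workhorse in each direction.

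\textbf{Necessity.} Assume $A = A_\Phi^{\Theta_1,\Theta_2}$ and decompose $\Phi = \Phi_1 + \Phi_2^*$ with $\Phi_1 \in H^2(\oL)$ and $\Phi_2 \in zH^2(\oL)$. Apply Lemma \ref{10} to the analytic piece $\Phi_1$: since $(I_{H^2(\hsp)} - SS^*)f = f(0)$ for $f \in K_{\Theta_1}^{\infty}$ and $D_{\Theta_1}f = \ke_0^{\Theta_1}f(0)$, the right-hand side factors as $B_1 D_{\Theta_1}$, where $B_1 : \mathcal{D}_{\Theta_1} \to K_{\Theta_2}$ is defined by $B_1(\ke_0^{\Theta_1}x) := P_{\Theta_2}(\Phi_1 x)$. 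For the co-analytic piece $\Phi_2^*$, apply Lemma \ref{10} to $\Phi_2$ viewed as the analytic symbol of $A_{\Phi_2}^{\Theta_2,\Theta_1} = (A_{\Phi_2^*}^{\Theta_1,\Theta_2})^*$, and take adjoints, using self-adjointness of $D_{\Theta_2}$ and $(S_{\Theta_1}XS_{\Theta_2}^*)^* = S_{\Theta_2}X^*S_{\Theta_1}^*$. This yields $A_{\Phi_2^*}^{\Theta_1,\Theta_2} - S_{\Theta_2} A_{\Phi_2^*}^{\Theta_1,\Theta_2} S_{\Theta_1}^* = D_{\Theta_2}B_2^*$ for a suitable $B_2 : \mathcal{D}_{\Theta_2} \to K_{\Theta_1}$. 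Summing the two identities produces \eqref{cara}.

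\textbf{Sufficiency.} Given $B_1, B_2$ satisfying \eqref{cara}, reverse-engineer a candidate symbol. Since $\dim\hsp<\infty$, the linear maps $x \mapsto B_1\ke_0^{\Theta_1}x \in K_{\Theta_2}$ and $y \mapsto B_2\ke_0^{\Theta_2}y \in K_{\Theta_1}$ define operator-valued functions $\Phi_1, \Phi_2 \in H^2(\oL)$ via $\Phi_i(z)v := (B_i\ke_0^{\Theta_i}v)(z)$. Set $\Phi := \Phi_1 + \Phi_2^* \in L^2(\oL)$. Re-running the necessity computation on $K_{\Theta_1}^{\infty}$ shows that $A_\Phi^{\Theta_1,\Theta_2}$ satisfies \eqref{cara} with the very same $B_1, B_2$ (the consistency $P_{\Theta_2}(\Phi_1 x) = B_1 \ke_0^{\Theta_1} x$ is automatic because $B_1 \ke_0^{\Theta_1} x \in K_{\Theta_2}$ by hypothesis). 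Hence $T := A - A_\Phi^{\Theta_1,\Theta_2}$ obeys $T = S_{\Theta_2} T S_{\Theta_1}^*$ on $K_{\Theta_1}^{\infty}$, and by iteration $T = S_{\Theta_2}^n T S_{\Theta_1}^{*n}$ for every $n \ge 1$.

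\textbf{Main obstacle.} The concluding step $T \equiv 0$ is the delicate one. Since $\Theta_1, \Theta_2$ are pure inner, $S_{\Theta_i}^{*n} \to 0$ strongly, but $A_\Phi^{\Theta_1,\Theta_2}$ is only densely defined a priori, so the naive bound $\|S_{\Theta_2}^n T S_{\Theta_1}^{*n} f\| \le \|T\|\,\|S_{\Theta_1}^{*n}f\|$ is unavailable. The key observation is that $A$ and $A_\Phi^{\Theta_1,\Theta_2}$ satisfy the same telescoping identity with common partial sums $\sigma_n(f) := \sum_{k=0}^{n-1}S_{\Theta_2}^k(B_1 D_{\Theta_1}+D_{\Theta_2}B_2^*)S_{\Theta_1}^{*k}f$, and $\sigma_n(f) \to Af$ strongly because $\|S_{\Theta_2}^n A S_{\Theta_1}^{*n}f\| \le \|A\|\,\|S_{\Theta_1}^{*n}f\| \to 0$. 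I would then pair the identity $Tf = S_{\Theta_2}^n T S_{\Theta_1}^{*n}f$ against reproducing kernels $\ke_\lambda^{\Theta_2} x$ and exploit $\|S_{\Theta_2}^{*n}\ke_\lambda^{\Theta_2}x\| \to 0$, with a uniform-in-$n$ estimate on $\|T S_{\Theta_1}^{*n}f\|$ extracted from the telescoping formula and boundedness of $A$, to force $\langle Tf,\ke_\lambda^{\Theta_2}x\rangle = 0$ for all $\lambda \in \mathbb{D}$, $x \in \hsp$. This forces $Tf = 0$ on the dense set $K_{\Theta_1}^{\infty}$, so $A_\Phi^{\Theta_1,\Theta_2}$ extends boundedly to equal $A$, completing the proof.
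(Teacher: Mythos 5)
A preliminary remark: this paper does not actually prove Theorem \ref{tcara} — it is quoted from \cite{RK3} — so your proposal can only be measured against the standard argument (Sarason's Theorem 4.1 in \cite{sar}, as adapted in \cite{KT} and \cite{RK3}). Your necessity direction is exactly that argument and is correct: split $\Phi=\Phi_1+\Phi_2^*$ along $L^2(\oL)=[zH^2(\oL)]^*\oplus H^2(\oL)$, apply Lemma \ref{10} to $\Phi_1$ and to $\Phi_2$ (for $A_{\Phi_2}^{\Theta_2,\Theta_1}=(A_{\Phi_2^*}^{\Theta_1,\Theta_2})^*$, then take adjoints), and read off $B_1,B_2$ from $(I-SS^*)f=f(0)$ and $D_{\Theta_i}f=\ke_0^{\Theta_i}f(0)$; boundedness of $B_1,B_2$ is automatic since $\mathcal{D}_{\Theta_i}$ is finite-dimensional. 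The setup of sufficiency — building $\Phi=\Phi_1+\Phi_2^*$ from $B_1,B_2$, checking that $A_\Phi^{\Theta_1,\Theta_2}$ satisfies \eqref{cara} with the same right-hand side $R:=B_1D_{\Theta_1}+D_{\Theta_2}B_2^*$, and the telescoping sums $\sigma_n(f)\to Af$ — is also correct.

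The gap is your concluding step. You need $\sup_n\|TS_{\Theta_1}^{*n}f\|<\infty$ where $T=A-A_\Phi^{\Theta_1,\Theta_2}$, and you claim this can be ``extracted from the telescoping formula and boundedness of $A$''. The term $\|AS_{\Theta_1}^{*n}f\|$ is fine, but $\|A_\Phi^{\Theta_1,\Theta_2}S_{\Theta_1}^{*n}f\|=\|P_{\Theta_2}(\Phi\, S^{*n}f)\|$ is not: with $\Phi$ merely in $L^2(\oL)$ the only available estimate goes through $\|S^{*n}f\|_\infty=\|f-p_{n-1}\|_\infty$ ($p_{n-1}$ the Taylor partial sum of $f$), which is not uniformly bounded even for scalar $f\in H^\infty$; the identity $Tf=S_{\Theta_2}^nTS_{\Theta_1}^{*n}f$ only yields the useless lower bound $\|TS_{\Theta_1}^{*n}f\|\geq\|Tf\|$; and a uniform bound on $A_\Phi^{\Theta_1,\Theta_2}S_{\Theta_1}^{*n}$ is essentially the boundedness of $A_\Phi^{\Theta_1,\Theta_2}$, which is what is being proved. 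The standard way out never iterates $T$: one computes $\sigma_n(f)$ explicitly. Since $D_{\Theta_1}S_{\Theta_1}^{*k}f=\ke_0^{\Theta_1}\hat f(k)$ (with $\hat f(k)\in\hsp$ the $k$-th Taylor coefficient) and $S_{\Theta_2}^k=P_{\Theta_2}M_{z^k}$ on $K_{\Theta_2}$, the $B_1D_{\Theta_1}$-half of $\sigma_n(f)$ equals $P_{\Theta_2}(\Phi_1 p_n)$ with $p_n(z)=\sum_{k=0}^{n-1}\hat f(k)z^k$; because $p_n\to f$ in $H^2(\hsp)$ and $\Phi_1^*g\in L^2(\hsp)$ for bounded $g$, one gets $\langle P_{\Theta_2}(\Phi_1p_n),g\rangle\to\langle \Phi_1 f,g\rangle$ for all $g\in K_{\Theta_2}^\infty$, and the $D_{\Theta_2}B_2^*$-half is handled symmetrically after moving it to the other side of the pairing. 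Combined with $\sigma_n(f)\to Af$, this gives $\langle Af,g\rangle=\langle A_\Phi^{\Theta_1,\Theta_2}f,g\rangle$ on $K_{\Theta_1}^\infty\times K_{\Theta_2}^\infty$, hence $A=A_\Phi^{\Theta_1,\Theta_2}\in\mathcal{MT}(\Theta_1,\Theta_2)$, with no uniform estimate on $A_\Phi^{\Theta_1,\Theta_2}$ ever required.
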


\smallskip
If a bounded operator $A: K_{\Theta_1}\to K_{\Theta_2}$ satisfies \eqref{cara}, then $A=A_{\Psi+\Xi^*}^{\Theta_1, \Theta_2}\in \mathcal{MT}(\Theta_1, \Theta_2)$ with $\Psi,\Xi\in H^2(\oL)$ are given below.
\begin{corollary}{\em\cite{RK3}}\label{cep}
	Let $ {\Theta_1},{\Theta_2} \in H^{\infty}(\oL)$ be two pure inner functions and let $A:K_{\Theta_{1}}\rightarrow  K_{\Theta_{2}}$ be a bounded linear operator.
\end{corollary}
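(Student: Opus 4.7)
The plan is to construct explicit symbols $\Psi,\Xi\in H^2(\oL)$ out of the operators $B_1$ and $B_2$ furnished by Theorem \ref{tcara}, and then verify that $A_{\Psi+\Xi^*}^{\Theta_1,\Theta_2}$ agrees with $A$ on all of $K_{\Theta_1}$. The candidates I would take are
\[
\Psi(z)x := \bigl(B_1\ke_0^{\Theta_1}x\bigr)(z),\qquad \Xi(z)y := \bigl(B_2\ke_0^{\Theta_2}y\bigr)(z),
\]
for $x,y\in\hsp$ and $z\in\mathbb{D}$. Since $B_1\ke_0^{\Theta_1}x\in K_{\Theta_2}\subset H^2(\hsp)$ and $B_2\ke_0^{\Theta_2}y\in K_{\Theta_1}\subset H^2(\hsp)$ depend linearly on the vector argument, these prescriptions give operator-valued analytic functions; the bound $\|\Psi\|_{H^2(\oL)}^{2}\le\|B_1\|^{2}\sum_i\|\ke_0^{\Theta_1}e_i\|^{2}$ (for any orthonormal basis $\{e_i\}$ of $\hsp$), and an analogous bound for $\Xi$, give membership in $H^2(\oL)$.

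Next I would reproduce the two terms of \eqref{cara}. For $f\in K_{\Theta_1}^{\infty}$, Lemma \ref{10} combined with \eqref{ect} yields
\[
\bigl(A_\Psi^{\Theta_1,\Theta_2}-S_{\Theta_2}A_\Psi^{\Theta_1,\Theta_2}S_{\Theta_1}^*\bigr)f = P_{\Theta_2}\bigl(\Psi\,f(0)\bigr) = B_1\ke_0^{\Theta_1}f(0) = B_1D_{\Theta_1}f,
\]
where the second equality uses that $B_1\ke_0^{\Theta_1}f(0)$ already lies in $K_{\Theta_2}$. Passing to adjoints, applying Lemma \ref{10} to $A_\Xi^{\Theta_2,\Theta_1}=(A_{\Xi^*}^{\Theta_1,\Theta_2})^{*}$ with the roles of $\Theta_1$ and $\Theta_2$ swapped, and using the self-adjointness of $D_{\Theta_2}$ give
\[
A_{\Xi^*}^{\Theta_1,\Theta_2}-S_{\Theta_2}A_{\Xi^*}^{\Theta_1,\Theta_2}S_{\Theta_1}^* = D_{\Theta_2}B_2^*.
\]
Subtracting these two identities from \eqref{cara}, the operator $T:=A-A_{\Psi+\Xi^*}^{\Theta_1,\Theta_2}$ satisfies $T=S_{\Theta_2}TS_{\Theta_1}^*$ on the dense subset $K_{\Theta_1}^{\infty}$, hence on all of $K_{\Theta_1}$ by boundedness.

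Iterating the fixed-point relation gives $T=S_{\Theta_2}^{n}T(S_{\Theta_1}^*)^{n}$ for every $n\ge 1$. Since $\Theta_1$ is pure inner, $(S_{\Theta_1}^*)^{n}\to 0$ strongly on $K_{\Theta_1}$ by standard Sz.-Nagy--Foias theory, and the contractivity of $S_{\Theta_2}$ then forces $Tf=0$ for every $f\in K_{\Theta_1}$. Hence $A=A_{\Psi+\Xi^*}^{\Theta_1,\Theta_2}\in\mathcal{MT}(\Theta_1,\Theta_2)$. The step I expect to require the most care is the antianalytic half: Lemma \ref{10} is stated only for analytic symbols, so the identity for $A_{\Xi^*}^{\Theta_1,\Theta_2}$ must be obtained by an adjoint argument, which means keeping careful track of which defect sits on the source side versus the target side and which $\Theta_j$ plays which role when Lemma \ref{10} is re-applied. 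The strong-convergence step is routine for a pure inner function and should not introduce additional difficulty.
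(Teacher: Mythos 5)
The paper itself offers no proof of Corollary~\ref{cep}: it is quoted verbatim from \cite{RK3}, so the comparison below is with the standard Sarason-type argument that the cited source (and \cite{KT,sar}) uses. Your choice of symbols $\Psi(z)x=(B_1\ke_0^{\Theta_1}x)(z)$, $\Xi(z)x=(B_2\ke_0^{\Theta_2}x)(z)$, and the two identities
\begin{equation*}
A_\Psi^{\Theta_1,\Theta_2}-S_{\Theta_2}A_\Psi^{\Theta_1,\Theta_2}S_{\Theta_1}^*=B_1D_{\Theta_1},\qquad
A_{\Xi^*}^{\Theta_1,\Theta_2}-S_{\Theta_2}A_{\Xi^*}^{\Theta_1,\Theta_2}S_{\Theta_1}^*=D_{\Theta_2}B_2^*
\end{equation*}
on $K_{\Theta_1}^\infty$ (the second obtained by the adjoint trick, which is fine if carried out weakly against $g\in K_{\Theta_2}^\infty$) are exactly the ones the standard proof uses, and your verification of them via Lemma~\ref{10} and \eqref{ect} is correct.

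The gap is in the closing step. You set $T=A-A_{\Psi+\Xi^*}^{\Theta_1,\Theta_2}$, extend $T=S_{\Theta_2}TS_{\Theta_1}^*$ from $K_{\Theta_1}^\infty$ to $K_{\Theta_1}$ ``by boundedness,'' and then let $n\to\infty$ in $T=S_{\Theta_2}^nT(S_{\Theta_1}^*)^n$. Both of these moves presuppose that $T$ -- equivalently, that the densely defined operator $A_{\Psi+\Xi^*}^{\Theta_1,\Theta_2}$ -- is bounded, which is precisely the conclusion you are trying to reach: from $(S_{\Theta_1}^*)^nf\to0$ in norm you cannot infer $T(S_{\Theta_1}^*)^nf\to0$ for an operator not yet known to be bounded, and a weak reformulation does not help because $(S^*)^nf$ tends to $0$ in $H^2(\hsp)$ but not in $H^\infty(\hsp)$, which is the topology in which $A_{\Psi+\Xi^*}^{\Theta_1,\Theta_2}$ is controlled. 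The non-circular route is to iterate the identity \eqref{cara} for $A$ alone, writing $A=\sum_{k=0}^{n-1}S_{\Theta_2}^k\big(B_1D_{\Theta_1}+D_{\Theta_2}B_2^*\big)(S_{\Theta_1}^*)^k+S_{\Theta_2}^nA(S_{\Theta_1}^*)^n$; here the remainder tends to $0$ strongly because $A$ \emph{is} bounded and $(S^*)^n\to0$ strongly on $H^2(\hsp)$, so $A$ is the strong limit of the partial sums, and one then identifies $\sum_kS_{\Theta_2}^kB_1D_{\Theta_1}(S_{\Theta_1}^*)^kf$ with $P_{\Theta_2}(\Psi f)$ (and dually for the $\Xi^*$ part) by the Fourier computation $D_{\Theta_1}(S^*)^kf=\ke_0^{\Theta_1}\hat f(k)$, tested against kernels $\ke_\lambda^{\Theta_2}x$. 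That identification is the real analytic content of part (a) and is missing from your write-up. Finally, you do not address part (b) at all; it is the easy direction (Lemma~\ref{10} plus $\Omega_{\Theta_1}D_{\Theta_1}f=f(0)$ gives $B_1D_{\Theta_1}=P_{\Theta_2}M_\Psi\Omega_{\Theta_1}D_{\Theta_1}$, and the $\Xi$-term again by adjoints), but a complete proof must include it.
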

\begin{enumerate}[(a)]
		\item If $A$ satisfies \eqref{cara}, then $A=A_{\Psi+\Xi^*}^{{\Theta_1},{\Theta_2}}\in \mathcal{MT}(\Theta_{1}, \Theta_{2})$ with
			\begin{equation}\label{symm} \Psi(z)x=\big(B_1\ke_{0}^{\Theta_{1}}x\big)(z)\quad\text{and}\quad\Xi(z)x=\big(B_2\ke_{0}^{\Theta_{2}}x\big)(z)~\mbox{for}~ x\in \hsp.
			\end{equation}
		\item If $A=A_{\Psi+\Xi^*}^{{\Theta_1},{\Theta_2}}\in \mathcal{MT}(\Theta_{1}, \Theta_{2})$, then $A$ satisfies \eqref{cara} with
\begin{equation}\label{symmm} B_1=P_{\Theta_2}M_{\Psi}\Omega_{\Theta_1}\quad\text{and}\quad B_2=P_{\Theta_1}M_{\Xi}\Omega_{\Theta_2}.
\end{equation}
\end{enumerate}
\smallskip
\begin{remark}\label{rmk}
\begin{enumerate}[(a)]
\item For an inner function $\Theta\in H^{\infty}(\oL)$ denote
$$\mathcal{M}_\Theta=H^2(\oL)\ominus \Theta H^2(\oL).$$
Therefore, if a bounded linear operator $A:K_{\Theta_1}\to K_{\Theta_2}$ satisfies \eqref{cara}, then $A=A_{\Psi+\Xi^*}^{\Theta_1, \Theta_2}\in \mathcal{MT}(\Theta_1, \Theta_2)$ with $\Psi\in \mathcal{M}_{\Theta_2}$ and $\Xi\in \mathcal{M}_{\Theta_1}$ given by \eqref{symm}.
\item Recall that $A_{\Phi}^{\Theta_1, \Theta_2}=0$ if and only if
$$\Phi\in \Theta_2 H^2(\oL)+(\Theta_1 H^2(\oL))^*~(\mbox{see  \cite{RK2}}).$$
Clearly, if $A=A_{\Psi+\Xi^*}^{\Theta_1, \Theta_2}$ with $\Psi, \Xi\in H^2(\oL)$, then the operators $B_1$ and $B_2$ given by \eqref{symmm} do not depend on the parts of $\Psi$ and $\Xi$ that belong to $\Theta_2 H^2(\oL)$ and $\Theta_1 H^2(\oL)$, respectively.
\end{enumerate}
\end{remark}
As in \cite{KT} we can use the unitary operator $\tau_\Theta$ defined by \eqref{tau} and obtain the following theorem.

\smallskip
\begin{theorem}{\em\cite{RK3}}\label{tca2}
	Let $ {\Theta_1},{\Theta_2} \in H^{\infty}(\oL)$ be two pure inner functions and let $A:K_{\Theta_{1}}\rightarrow  K_{\Theta_{2}}$ be a bounded linear operator. Then $A$ belongs to $\mathcal{MT}(\Theta_{1}, \Theta_{2})$  if and only if there exist bounded linear operators \mbox{$\widetilde{B}_1:\widetilde{\mathcal{D}}_{\Theta_{1}}\rightarrow K_{\Theta_{2}}$} and $\widetilde{B}_2: \widetilde{\mathcal{D}}_{\Theta_{2}}\rightarrow K_{\Theta_{1}}$, such that
	\begin{equation}\label{cara2}
	A-S_{\Theta_2}^*AS_{\Theta_1}=\widetilde{B}_1\dcf{1}+\dcf{2}\widetilde{B}_2^*.
	\end{equation}
\end{theorem}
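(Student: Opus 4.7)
The plan is to derive Theorem~\ref{tca2} from Theorem~\ref{tcara} by conjugating with the unitary flips $\tau_{\Theta_1},\tau_{\Theta_2}$, which swap $K_{\Theta_i}$ with $K_{\widetilde\Theta_i}$. Given a bounded linear operator $A\colon K_{\Theta_1}\to K_{\Theta_2}$, I would set
$$\widetilde A=\tau_{\Theta_2}A\,\tau_{\Theta_1}^{*}\colon K_{\widetilde\Theta_1}\to K_{\widetilde\Theta_2}.$$
The MATTO analogue of Theorem~\ref{ttau}, established in \cite{RK3} along the lines of \cite{KT}, says that $A\in\mathcal{MT}(\Theta_1,\Theta_2)$ if and only if $\widetilde A\in\mathcal{MT}(\widetilde\Theta_1,\widetilde\Theta_2)$, so it suffices to characterize membership of $\widetilde A$ by Theorem~\ref{tcara} and then translate the characterization back through $\tau$.

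For the forward direction, assuming $\widetilde A\in\mathcal{MT}(\widetilde\Theta_1,\widetilde\Theta_2)$, Theorem~\ref{tcara} produces operators $\widehat B_1\colon\mathcal D_{\widetilde\Theta_1}\to K_{\widetilde\Theta_2}$ and $\widehat B_2\colon\mathcal D_{\widetilde\Theta_2}\to K_{\widetilde\Theta_1}$ with
$$\widetilde A-S_{\widetilde\Theta_2}\widetilde A\,S_{\widetilde\Theta_1}^{*}=\widehat B_1 D_{\widetilde\Theta_1}+D_{\widetilde\Theta_2}\widehat B_2^{*}.$$
I would then conjugate by $\tau_{\widetilde\Theta_2}=\tau_{\Theta_2}^{*}$ on the left and $\tau_{\Theta_1}$ on the right. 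The intertwining identity \eqref{sz} in the form $S_{\widetilde\Theta}=\tau_\Theta S_\Theta^{*}\tau_{\widetilde\Theta}$ together with $\tau_{\widetilde\Theta_i}\tau_{\Theta_i}=I$ collapses the left-hand side to $A-S_{\Theta_2}^{*}A\,S_{\Theta_1}$. The identity \eqref{ddd} in the form $D_{\widetilde\Theta_i}=\tau_{\Theta_i}\widetilde D_{\Theta_i}\tau_{\widetilde\Theta_i}$ turns the right-hand side into $\widetilde B_1\widetilde D_{\Theta_1}+\widetilde D_{\Theta_2}\widetilde B_2^{*}$, where
$$\widetilde B_1=\tau_{\widetilde\Theta_2}\widehat B_1\tau_{\Theta_1},\qquad \widetilde B_2=\tau_{\widetilde\Theta_1}\widehat B_2\tau_{\Theta_2}.$$
Using $\widetilde{\mathcal D}_{\Theta_i}=\tau_{\widetilde\Theta_i}\mathcal D_{\widetilde\Theta_i}$ and $\widetilde{\widetilde\Theta}_i=\Theta_i$, these are bounded maps $\widetilde B_1\colon\widetilde{\mathcal D}_{\Theta_1}\to K_{\Theta_2}$ and $\widetilde B_2\colon\widetilde{\mathcal D}_{\Theta_2}\to K_{\Theta_1}$, delivering \eqref{cara2}.

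The converse simply runs the same conjugation backwards: given \eqref{cara2}, the operators $\widehat B_1=\tau_{\Theta_2}\widetilde B_1\tau_{\widetilde\Theta_1}$ and $\widehat B_2=\tau_{\Theta_1}\widetilde B_2\tau_{\widetilde\Theta_2}$ fulfill \eqref{cara} for $\widetilde A$, so Theorem~\ref{tcara} places $\widetilde A$ in $\mathcal{MT}(\widetilde\Theta_1,\widetilde\Theta_2)$ and the transfer returns $A\in\mathcal{MT}(\Theta_1,\Theta_2)$. The only genuine obstacle is bookkeeping: keeping careful track of the four inner functions $\Theta_1,\widetilde\Theta_1,\Theta_2,\widetilde\Theta_2$ and the identifications $\tau_\Theta^{*}=\tau_{\widetilde\Theta}$, $\widetilde{\widetilde\Theta}=\Theta$, and $\widetilde{\mathcal D}_\Theta=\tau_{\widetilde\Theta}\mathcal D_{\widetilde\Theta}$, so that each $\widetilde B_i$ ends up with exactly the source $\widetilde{\mathcal D}_{\Theta_i}$ and target $K_{\Theta_{3-i}}$ demanded in the statement. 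No new analytic input is needed beyond the transfer theorem and the intertwining relations \eqref{sz} and \eqref{ddd}.
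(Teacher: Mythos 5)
Your proposal is correct and follows exactly the route the paper intends: the paper introduces Theorem \ref{tca2} with the phrase ``As in \cite{KT} we can use the unitary operator $\tau_\Theta$,'' and the remark following it works with precisely your $\widetilde{A}=\tau_{\Theta_2}A\tau_{\Theta_1}^*$, so the intended argument is the same transfer of Theorem \ref{tcara} via the intertwinings \eqref{sz} and \eqref{ddd}. Your bookkeeping of domains ($\widetilde{\mathcal{D}}_{\Theta_i}=\tau_{\widetilde{\Theta}_i}\mathcal{D}_{\widetilde{\Theta}_i}$) and of the operators $\widetilde{B}_1=\tau_{\widetilde{\Theta}_2}\widehat{B}_1\tau_{\Theta_1}$, $\widetilde{B}_2=\tau_{\widetilde{\Theta}_1}\widehat{B}_2\tau_{\Theta_2}$ checks out.
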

\smallskip
Note from the proof of Theorem \ref{tca2} that if $A:K_{\Theta_1}\to K_{\Theta_2}$ satisfies \eqref{cara2} with some $\widetilde{B}_1:\widetilde{\mathcal{D}}_{\Theta_1}\to K_{\Theta_2}$ and $\widetilde{B}_2:\widetilde{\mathcal{D}}_{\Theta_2}\to K_{\Theta_1}$ where $B_1$ and $B_2$ are given by \eqref{symmm}, then
$$\widetilde{A}=\tau_{\Theta_2}A\tau_{\Theta_1}^*.$$ By Corollary \ref{cep}, $\widetilde{A}=A_{\Psi+\Xi^*}^{\widetilde{\Theta}_1, \widetilde{\Theta}_2}$ with
$$\Psi(z)x=(B_1\ke_{0}^{\widetilde{\Theta}_1}x)(z)
=(\tau_{\Theta_2}\widetilde{B}_1\tau^*_{\Theta_1}\ke_{0}^{\widetilde{\Theta}_1}x)(z)=(\tau_{\Theta_2}\widetilde{B}_1\widetilde{\ke}_{0}^{\Theta_1}x)(z)$$
and
$$\Xi(z)x=(B_2\ke_{0}^{\widetilde{\Theta}_1}x)(z)=(\tau_{\Theta_1}\widetilde{B}_2\tau^*_{\Theta_2}\ke_{0}^{\widetilde{\Theta}_2}x)(z)
=(\tau_{\Theta_1}\widetilde{B}_2\widetilde{\ke}_{0}^{\Theta_2}x)(z).$$
Moreover (see Remark \ref{rmk}), $\Psi\in \mathcal{M}_{\widetilde{\Theta}_2}$ and $\Xi\in \mathcal{M}_{\widetilde{\Theta}_1}$.

\smallskip

As in the scalar case, we can use Theorem \ref{tcara} and Theorem \ref{tca2} to get the following.
\begin{corollary}{\em\cite{RK3}}
Let $\Theta_1, \Theta_2\in H^\infty(\oL)$ be two pure inner functions and let $A:K_{\Theta_1}\to K_{\Theta_2}$ be a bounded linear operator. Then $A$ belongs to $\mathcal{MT}(\Theta_1, \Theta_2)$ if and only if the following hold:
\end{corollary}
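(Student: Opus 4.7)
The plan is to obtain the corollary as an immediate consequence of Theorem \ref{tcara} and Theorem \ref{tca2}, by observing that the two compressed-shift relations \eqref{cara} and \eqref{cara2} are each individually equivalent to membership in $\mathcal{MT}(\Theta_1,\Theta_2)$, and that they are transported into each other by the unitary $\tau_{\Theta}$ described in \eqref{tau} and \eqref{sz}. So the forward direction (from $A\in\mathcal{MT}(\Theta_1,\Theta_2)$ to the listed conditions) is just a combined invocation of the two theorems, with the operators $B_1,B_2$ given by \eqref{symmm} and the operators $\widetilde{B}_1,\widetilde{B}_2$ read off from the $\tau$-conjugated version, exactly as in the discussion right after Theorem \ref{tca2}.

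For the converse, either \eqref{cara} or \eqref{cara2} alone already forces $A=A_{\Psi+\Xi^{*}}^{\Theta_1,\Theta_2}\in\mathcal{MT}(\Theta_1,\Theta_2)$: in the first case via Theorem \ref{tcara} together with \eqref{symm}, and in the second case by first applying $\tau_{\Theta_2}(\cdot)\tau_{\Theta_1}^{*}$ to \eqref{cara2}, using \eqref{sz} to convert $\tau_{\Theta_2}S_{\Theta_2}^{*}\tau_{\Theta_1}^{*}=S_{\widetilde{\Theta}_2}$ and \eqref{ddd} to convert $\tau_{\Theta_j}\widetilde{D}_{\Theta_j}\tau_{\Theta_j}^{*}=D_{\widetilde{\Theta}_j}$, thereby reducing \eqref{cara2} for $A$ to \eqref{cara} for $\widetilde{A}=\tau_{\Theta_2}A\tau_{\Theta_1}^{*}$ relative to $(\widetilde{\Theta}_1,\widetilde{\Theta}_2)$, and then invoking Theorem \ref{tcara} followed by Theorem \ref{ttau} to return to $\mathcal{MT}(\Theta_1,\Theta_2)$.

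The substantive content of the corollary is the compatibility of the two representations: when both \eqref{cara} and \eqref{cara2} hold, the symbol $\Psi+\Xi^{*}$ recovered from either one must agree (modulo the ambiguity recorded in Remark \ref{rmk}). The natural way to check this is to apply Corollary \ref{cep}(a) to \eqref{cara} to obtain $\Psi(z)x=(B_1\ke_0^{\Theta_1}x)(z)$ and $\Xi(z)x=(B_2\ke_0^{\Theta_2}x)(z)$, then use the matching identities $\widetilde{\ke}_0^{\Theta_j}x=\tau_{\widetilde{\Theta}_j}(\ke_0^{\widetilde{\Theta}_j}x)$ together with the intertwining $\widetilde{B}_j=\tau_{\Theta_{3-j}}^{*}B_j\tau_{\Theta_j}$ (up to the obvious relabelling) to show that the $\widetilde{B}_j$ arising from \eqref{cara2} produce the \emph{same} $\Psi\in\mathcal{M}_{\Theta_2}$ and $\Xi\in\mathcal{M}_{\Theta_1}$.

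The main obstacle I foresee is not a conceptual one but a bookkeeping one: keeping straight which inner function each reproducing kernel belongs to ($\Theta_j$ versus $\widetilde{\Theta}_j$), correctly tracking the two subscripts in the asymmetric setting, and correctly interchanging the roles of $B_1,B_2$ with $\widetilde{B}_1,\widetilde{B}_2$ under the $\tau$-conjugation. Once these identifications are in place, every step is a direct application of results already established in Sections \ref{s3}, \ref{s4} and of Theorems \ref{tcara}, \ref{tca2}, so no new analytic input is required.
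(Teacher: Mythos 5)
You have proved the wrong statement. Conditions (a) and (b) of this corollary are not the relations \eqref{cara} and \eqref{cara2} of Theorems \ref{tcara} and \ref{tca2}: they are the one-sided intertwining identities $S_{\Theta_2}^*A-AS_{\Theta_1}^*=\widehat B_1 D_{\Theta_1}+\widetilde D_{\Theta_2}\widehat B_2^*$ and $S_{\Theta_2}A-AS_{\Theta_1}=\widehat B_1\widetilde D_{\Theta_1}+D_{\Theta_2}\widehat B_2^*$, and each of them mixes the two kinds of defect operators ($D$ attached to one model space, $\widetilde D$ to the other) — something neither \eqref{cara} nor \eqref{cara2} does. Your argument establishes only that \eqref{cara} and \eqref{cara2} are each equivalent to $A\in\mathcal{MT}(\Theta_1,\Theta_2)$, that they are exchanged by $\tau$-conjugation via \eqref{sz} and \eqref{ddd}, and that the recovered symbols agree. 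That is a consistency check on the two theorems, not the corollary: at no point does your argument produce the commutator-type identities (a) and (b).

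The missing ingredient is a short algebraic passage. For (a), multiply \eqref{cara} on the left by $S_{\Theta_2}^*$ and use $S_{\Theta_2}^*S_{\Theta_2}=I-\widetilde D_{\Theta_2}$ together with $S_{\Theta_2}^*D_{\Theta_2}=\widetilde D_{\Theta_2}S_{\Theta_2}^*$ to get
$$S_{\Theta_2}^*A-AS_{\Theta_1}^*=S_{\Theta_2}^*B_1D_{\Theta_1}+\widetilde D_{\Theta_2}\bigl(S_{\Theta_2}^*B_2^*-AS_{\Theta_1}^*\bigr),$$
so that $\widehat B_1=S_{\Theta_2}^*B_1$ and $\widehat B_2=\bigl(P_{\widetilde{\mathcal D}_{\Theta_2}}(S_{\Theta_2}^*B_2^*-AS_{\Theta_1}^*)\bigr)^*$; the converse reverses this by left-multiplying with $S_{\Theta_2}$ and absorbing the leftover term, which returns an identity of the form \eqref{cara} and hence, by Theorem \ref{tcara}, membership in $\mathcal{MT}(\Theta_1,\Theta_2)$. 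Condition (b) is obtained in the same way from \eqref{cara2}, using $S_{\Theta_2}S_{\Theta_2}^*=I-D_{\Theta_2}$ and $S_{\Theta_2}\widetilde D_{\Theta_2}=D_{\Theta_2}S_{\Theta_2}$. The paper itself quotes this corollary from \cite{RK3} without reproducing a proof, but the indicated route (``use Theorem \ref{tcara} and Theorem \ref{tca2}'') is precisely this left-multiplication scheme, which the paper does carry out explicitly for the Hankel analogues in Corollary \ref{characterization ii} and Corollary \ref{characterization iv}; your $\tau$-conjugation bookkeeping, however carefully done, never reaches the stated conditions.
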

\begin{enumerate}[(a)]
\item there exist bounded linear operators $\widehat{B}_1:\mathcal{D}_{\Theta_1}\to K_{\Theta_2}$ and $\widehat{B}_2:\widetilde{{\mathcal{D}}}_{\Theta_2}\to K_{\Theta_1}$ such that
    $$S_{\Theta_2}^*A-AS_{\Theta_1}^*=\widehat{B}_1D_{\Theta_1}+\widetilde{D}_{\Theta_2}\widehat{B}_2^*.$$\label{i}
    \item there exist bounded linear operators $\widehat{B}_1:\widetilde{\mathcal{D}}_{\Theta_1}\to K_{\Theta_2}$ and $\widehat{B}_2:\mathcal{D}_{\Theta_2}\to K_{\Theta_1}$ such that
        $$S_{\Theta_2}A-AS_{\Theta_1}=\widehat{B}_1\widetilde{D}_{\Theta_1}+D_{\Theta_2}\widehat{B}_{2}^*.$$\label{ii}
\end{enumerate}

\subsection{By Shift invariance}
In the scalar case the notion of shift invariance for TTO's was introduced in \cite{sar}. D. Sarason proved that a bounded linear operators $A:K_\theta\to K_\theta$ is a TTO if and only if it is shift invariant, i.e.,
$$\langle ASf, Sf\rangle_{L^2}=\langle Af, f\rangle_{L^2}$$
 for each $f\in K_\theta$ such that $Sf\in K_\theta.$
In \cite{KT} we prove that the same is true for MATTO's.

For ATTO's the notion of shift invariance was introduced in \cite{CKP2} ( see also \cite{l2}). Shift invariance of matrix valued truncated Toeplitz operators is presented in \cite{KT}. Here we consider shift invariance of MATTO's. As in the scalar case (see \cite{BM}), we characterize MATTO's in term of four (equivalent) types of shift invariance.

Recall that for an operator valued inner function $\Theta\in H^\infty(\oL)$ and for $f\in K_{\Theta}$ we have
$$Sf=M_zf\in K_{\Theta}~~\text{if and only if} ~~ f\perp \widetilde{\mathcal{D}}_{\Theta}~~ ((\tau_\Theta f(0))=0)$$
and
$$S^*f=M_{\overline{z}}f\in K_{\Theta} ~~ \text{if and only if} ~~ f\perp\mathcal{D}_{\Theta}~~ (f(0)=0).$$
\begin{theorem}{\em\cite{RK3}}
Let $\Theta_1, \Theta_2\in H^\infty(\oL)$ be two pure inner functions and let $A:K_{\Theta_1}\to K_{\Theta_2}$ be a bounded linear operator. Then $A$ belongs to $\mathcal{MT}(\Theta_1, \Theta_2)$ if and only if it has one {\em(}and all{\em)} of the following properties:
\end{theorem}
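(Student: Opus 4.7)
Although the explicit list of four shift-invariance conditions is cut from the excerpt, it is clear from context (and from the analogy with \cite{BM} and the discussion preceding the statement) that the four conditions correspond to the four natural pairings: one pairs the shifts $S_{\Theta_1}$ or $S_{\Theta_1}^*$ acting on $f\in K_{\Theta_1}$ with $S_{\Theta_2}$ or $S_{\Theta_2}^*$ acting on $g\in K_{\Theta_2}$, each restricted to pairs $(f,g)$ for which the shifted vectors still lie in the appropriate model space (i.e. $f\perp\mathcal{D}_{\Theta_1}$, $f\perp\widetilde{\mathcal{D}}_{\Theta_1}$, and their $\Theta_2$-analogues). The plan is to show each of the four conditions is equivalent to $A\in\mathcal{MT}(\Theta_1,\Theta_2)$, matching each condition with the corresponding factorization identity: \eqref{cara} of Theorem \ref{tcara}, \eqref{cara2} of Theorem \ref{tca2}, and the two mixed identities (i), (ii) of the corollary just above.

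\textbf{Forward direction.} Suppose $A=A_{\Psi+\Xi^*}^{\Theta_1,\Theta_2}\in\mathcal{MT}(\Theta_1,\Theta_2)$. For the condition involving $S^*$ on both sides, apply \eqref{cara} to write $A-S_{\Theta_2}AS_{\Theta_1}^*=B_1D_{\Theta_1}+D_{\Theta_2}B_2^*$. For any $f\in K_{\Theta_1}$ with $f(0)=0$ and $g\in K_{\Theta_2}$ with $g(0)=0$ we have $D_{\Theta_1}f=0$ and $D_{\Theta_2}g=0$, so both correction terms vanish upon pairing against $g$; moreover $S_{\Theta_j}^*$ agrees with $S^*$ on vectors vanishing at $0$. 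Hence $\langle Af,g\rangle=\langle AS^*f,S^*g\rangle$. The three remaining conditions are treated identically, each using its own operator identity (\eqref{cara2} for the $S,S$ pairing, and the two mixed identities of the preceding corollary for the $S,S^*$ and $S^*,S$ pairings), with $\widetilde{D}_{\Theta_j}$ replacing $D_{\Theta_j}$ whenever the shift is $S_{\Theta_j}$ rather than $S_{\Theta_j}^*$.

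\textbf{Converse.} Suppose $A$ satisfies one of the shift-invariance conditions, say the $S^*,S^*$ version. Set $C=A-S_{\Theta_2}AS_{\Theta_1}^*$. The hypothesis then reads $\langle Cf,g\rangle=0$ for all $f\in\mathcal{D}_{\Theta_1}^\perp\cap K_{\Theta_1}$ and $g\in\mathcal{D}_{\Theta_2}^\perp\cap K_{\Theta_2}$. Decomposing $K_{\Theta_j}=\mathcal{D}_{\Theta_j}\oplus\mathcal{D}_{\Theta_j}^\perp$, this says the $(\mathcal{D}_{\Theta_2}^\perp,\mathcal{D}_{\Theta_1}^\perp)$-block of $C$ is zero. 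Since $\Theta_j$ is pure, the formula $D_{\Theta_j}\ke_0^{\Theta_j}x=\ke_0^{\Theta_j}(I_\hsp-\Theta_j(0)\Theta_j(0)^*)x$ shows that $D_{\Theta_j}|_{\mathcal{D}_{\Theta_j}}$ is a bounded invertible operator on the finite-dimensional space $\mathcal{D}_{\Theta_j}$. Define $B_1:\mathcal{D}_{\Theta_1}\to K_{\Theta_2}$ by $B_1 = P_{\mathcal{D}_{\Theta_2}^\perp}C\,(D_{\Theta_1}|_{\mathcal{D}_{\Theta_1}})^{-1}$ (extended by $0$) and $B_2:\mathcal{D}_{\Theta_2}\to K_{\Theta_1}$ so that $D_{\Theta_2}B_2^*$ absorbs the remaining $(\mathcal{D}_{\Theta_2},\cdot)$-row of $C$; finite-dimensionality of $\mathcal{D}_{\Theta_j}$ forces $B_1,B_2$ to be bounded automatically. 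This produces the factorization \eqref{cara}, and Theorem \ref{tcara} then yields $A\in\mathcal{MT}(\Theta_1,\Theta_2)$. The three remaining converses are proved along the same lines using Theorem \ref{tca2} and parts (i)–(ii) of the preceding corollary.

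\textbf{Main obstacle.} The forward direction is essentially a bookkeeping exercise. The real work is in the converse: given vanishing of a bilinear form on a finite-codimension subspace, one has to \emph{construct} $B_1$ and $B_2$ realizing the factorization, not merely to assert consistency. This is where the purity assumption $\|\Theta_j(0)\|_\oL<1$ is essential, since it delivers the invertibility of $D_{\Theta_j}|_{\mathcal{D}_{\Theta_j}}$ (and analogously of $\widetilde{D}_{\Theta_j}|_{\widetilde{\mathcal{D}}_{\Theta_j}}$ for the other cases), and the finite-dimensionality $\dim\hsp<\infty$ removes any subtlety about boundedness of the resulting $B_1,B_2$.
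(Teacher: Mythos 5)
The paper contains no proof of this statement: Section 5 merely restates it from \cite{RK3}, so there is no internal argument to compare yours against. That said, your proposal is correct and is precisely the Sarason-type argument that the surrounding material is designed to support: the forward direction by pairing the four operator identities (\eqref{cara}, \eqref{cara2}, and parts (a)--(b) of the preceding corollary) against vectors annihilated by the relevant defect operator, and the converse by solving for $B_1,B_2$. Two remarks on the converse. First, your key construction --- inverting the defect operator on its range to get $B_1 D_{\Theta_1}=P_{\mathcal{D}_{\Theta_2}^{\perp}}CP_{\mathcal{D}_{\Theta_1}}$ and $D_{\Theta_2}B_2^{*}=P_{\mathcal{D}_{\Theta_2}}C$ --- is exactly what the paper already packages in \eqref{J1D1}--\eqref{J2D2}, where $P_{\mathcal{D}_{\Theta_j}}=D_{\Theta_j}J_{\Theta_j}=J_{\Theta_j}^{*}D_{\Theta_j}$ with $J_{\Theta_j}$ bounded; this boundedness follows from purity alone, since $I_{\hsp}-\Theta_j(0)\Theta_j(0)^{*}\geq(1-\|\Theta_j(0)\|_{\oL}^{2})I_{\hsp}>0$, so your appeal to $\dim\hsp<\infty$ is a convenience rather than a necessity. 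Second, you should state explicitly (you use it silently) that for $f\perp\mathcal{D}_{\Theta_1}$ and $g\perp\mathcal{D}_{\Theta_2}$ one has $S^{*}f=S_{\Theta_1}^{*}f\in K_{\Theta_1}$ and $S^{*}g=S_{\Theta_2}^{*}g\in K_{\Theta_2}$, so that the shift-invariance hypothesis really is the vanishing of $P_{\mathcal{D}_{\Theta_2}^{\perp}}\bigl(A-S_{\Theta_2}AS_{\Theta_1}^{*}\bigr)P_{\mathcal{D}_{\Theta_1}^{\perp}}$, and analogously (with $\widetilde{\mathcal{D}}_{\Theta_j}$ and $S_{\Theta_j}$) in the other three cases.
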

\begin{enumerate}[(a)]
\item $\langle AS^*f, S^*g\rangle_{L^2(\hsp)}=\langle A f,g\rangle_{L^2(\hsp)}$ for all $f\in K_{\Theta_1}$, $g\in K_{\Theta_2}$ such that $f\perp\mathcal{D}_{\Theta_1}$, $g\perp\mathcal{D}_{\Theta_2}$;\label{T1}
    \item $\langle AS^*f, g\rangle_{L^2(\hsp)}=\langle Af,Sg\rangle_{L^2(\hsp)}$ for all $f\in K_{\Theta_1}$, $g\in K_{\Theta_2}$ such that $f\perp\mathcal{D}_{\Theta_1}$, $g\perp\widetilde{\mathcal{D}}_{\Theta_2}$;\label{T2}
    \item $\langle ASf, Sg\rangle_{L^2(\hsp)}=\langle Af,g\rangle_{L^2(\hsp)}$ for all $f\in K_{\Theta_1}$, $g\in K_{\Theta_2}$ such that $f\perp\widetilde{\mathcal{D}}_{\Theta_1}$, $g\perp\widetilde{\mathcal{D}}_{\Theta_2}$;\label{T3}
    \item $\langle ASf, g\rangle_{L^2(\hsp)}=\langle Af,S^*g\rangle_{L^2(\hsp)}$ for all $f\in K_{\Theta_1}$, $g\in K_{\Theta_2}$ such that $f\perp\widetilde{\mathcal{D}}_{\Theta_1}$, $g\perp\mathcal{D}_{\Theta_2}$.\label{T4}
\end{enumerate}

\subsection{Characterization by modified compressed shift}\label{char by of MTTO by mod comp shift}
The notion of modified compressed shift was introduced in \cite{KT}. For any nonconstant inner function $\Theta$, suppose that $X_\Theta : \widetilde{\mathcal{D}}_{\Theta}\rightarrow \mathcal{D}_{\Theta}$, and consider $\widehat{X}_\Theta\in \mathcal{ L}(K_\Theta)$ defined by $\widehat{X}_\Theta f=X_\Theta P_{\widetilde{\mathcal{D}}_{\Theta}}f$.
The operator modified shift is  defined by
$$S_{\Theta, X_\Theta}=S_{\Theta}+(\widehat{X}_\Theta-S_{\Theta})P_{\widetilde{\mathcal{D}}_{\Theta}}, $$
or $$S_{\Theta, X_\Theta}=S_{\Theta}+P_{\mathcal{D}_{\Theta}}Y_{\Theta}P_{\widetilde{\mathcal{D}}_{\Theta}}, $$
implies that $$S_{\Theta}=S_{\Theta, X_\Theta}-P_{\mathcal{D}_{\Theta}}Y_{\Theta}P_{\widetilde{\mathcal{D}}_{\Theta}}$$
where $Y_{\Theta}=\widehat{X}_{\Theta}-S_{\Theta}$.

From equation (3.8) of \cite{KT}, there is an operator $J_{\Theta_{1}}\in \mathcal{L}(K_{\Theta_{1}})$ such that
\begin{equation}\label{J1D1}
P_{\mathcal{D}_{\Theta_{1}}}=(I-S_{\Theta_{1}}S_{\Theta_{1}}^*)J_{\Theta_{1}}=D_{\Theta_{1}}J_{\Theta_{1}}=J_{\Theta_{1}}^*D_{\Theta_{1}},
\end{equation}
  and similarly there is $J_{\Theta_{2}}\in \mathcal{L}(K_{\Theta_{2}})$ such that
  \begin{equation}\label{J2D2}
  P_{\mathcal{D}_{\Theta_{2}}}=(I-S_{\Theta_{2}}S_{\Theta_{2}}^*)J_{\Theta_{2}}=D_{\Theta_{2}}J_{\Theta_{2}}=J_{\Theta_{2}}^*D_{\Theta_{2}}.
  \end{equation}
\smallskip
\begin{theorem}{\em\cite{RK3}} Let $ {\Theta_1}, {\Theta_2}\in H^\infty (\oL)$ be two pure inner functions and let $A:K_{\Theta_{1}}\rightarrow  K_{\Theta_{2}}$ be a bounded operator.
Then $A\in\mathcal{MT}(\Theta_{1}, \Theta_{2})$  if and only if
\begin{align*}
A-S_{\Theta_2, X_{\Theta_2}}AS_{\Theta_1, X_{\Theta_1}}^*=BD_{\Theta_{1}}+D_{\Theta_{2}}B^{'*},
\end{align*}
with $B_1:\mathcal{D}_{\Theta_{1}}\to K_{\Theta_{2}}$ and $B_{2}^{\prime}:\mathcal{D}_{\Theta_{2}}\to K_{\Theta_1}$.
\end{theorem}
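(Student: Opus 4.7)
The plan is to reduce the modified-shift characterization to Theorem \ref{tcara} by showing that replacing $S_{\Theta_i}$ by $S_{\Theta_i, X_{\Theta_i}}$ only modifies the right-hand side by terms that are themselves of the form $\widetilde{B}_1 D_{\Theta_1} + D_{\Theta_2}\widetilde{B}_2^*$. Concretely, I would start from the defining identity
\begin{equation*}
S_{\Theta, X_\Theta} = S_\Theta + P_{\mathcal{D}_\Theta} Y_\Theta P_{\widetilde{\mathcal{D}}_\Theta},
\qquad
S_{\Theta, X_\Theta}^{*} = S_\Theta^{*} + P_{\widetilde{\mathcal{D}}_\Theta} Y_\Theta^{*} P_{\mathcal{D}_\Theta},
\end{equation*}
and expand $S_{\Theta_2,X_{\Theta_2}} A S_{\Theta_1,X_{\Theta_1}}^{*}$ into the sum of $S_{\Theta_2} A S_{\Theta_1}^{*}$ and three ``correction'' terms, each of which carries a factor $P_{\mathcal{D}_{\Theta_1}}$ on the right, or $P_{\mathcal{D}_{\Theta_2}}$ on the left, or both.

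The key algebraic step is to absorb these corrections into the required shape using \eqref{J1D1}--\eqref{J2D2}, namely $P_{\mathcal{D}_{\Theta_1}} = J_{\Theta_1}^{*} D_{\Theta_1}$ and $P_{\mathcal{D}_{\Theta_2}} = D_{\Theta_2} J_{\Theta_2}$. With these substitutions, the first correction becomes $C_1 D_{\Theta_1}$ for a bounded $C_1\colon K_{\Theta_1}\to K_{\Theta_2}$; the second becomes $D_{\Theta_2} C_2$; and the mixed term $P_{\mathcal{D}_{\Theta_2}} Y_{\Theta_2} P_{\widetilde{\mathcal{D}}_{\Theta_2}} A P_{\widetilde{\mathcal{D}}_{\Theta_1}} Y_{\Theta_1}^{*} P_{\mathcal{D}_{\Theta_1}}$ takes the form $D_{\Theta_2} K D_{\Theta_1}$, which I would group together with $C_1 D_{\Theta_1}$ by replacing $C_1$ with $C_1 + D_{\Theta_2} K$. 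To convert between the unrestricted form $C_1 D_{\Theta_1} + D_{\Theta_2} C_2$ and the form $B D_{\Theta_1} + D_{\Theta_2} B'^{*}$ with domains $\mathcal{D}_{\Theta_1}$ and $\mathcal{D}_{\Theta_2}$, I use two easy observations: since $D_{\Theta_1}$ has range in $\mathcal{D}_{\Theta_1}$, one may set $B = C_1|_{\mathcal{D}_{\Theta_1}}$; and since $D_{\Theta_2}$ annihilates $\mathcal{D}_{\Theta_2}^{\perp}$, one has $D_{\Theta_2} C_2 = D_{\Theta_2} P_{\mathcal{D}_{\Theta_2}} C_2$, so $B' := (P_{\mathcal{D}_{\Theta_2}} C_2)^{*}\colon \mathcal{D}_{\Theta_2}\to K_{\Theta_1}$ does the job.

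Now the two directions of the theorem follow mechanically. For the forward direction, if $A \in \mathcal{MT}(\Theta_1,\Theta_2)$, apply Theorem \ref{tcara} to write $A - S_{\Theta_2} A S_{\Theta_1}^{*} = B_1 D_{\Theta_1} + D_{\Theta_2} B_2^{*}$ and then use the expansion above to transport this identity to $A - S_{\Theta_2,X_{\Theta_2}} A S_{\Theta_1,X_{\Theta_1}}^{*}$, absorbing the (additional) correction terms as just described. For the converse, assume the modified-shift identity holds and add the correction terms going the opposite way to recover an identity of the form required by Theorem \ref{tcara}, whence $A\in\mathcal{MT}(\Theta_1,\Theta_2)$.

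The main obstacle is bookkeeping rather than any deep step: one must keep straight which side each projection acts on and verify that the resulting operators really do have domains $\mathcal{D}_{\Theta_1}$ and $\mathcal{D}_{\Theta_2}$ respectively. The mixed term $D_{\Theta_2} K D_{\Theta_1}$ is the only genuinely new feature compared with the unmodified case of Theorem \ref{tcara}; it is harmless because it sits inside both $\operatorname{ran} D_{\Theta_2}$ and $\operatorname{ker} D_{\Theta_1}^{\perp}$, so it may be absorbed into either summand. No further hypothesis on the auxiliary operators $X_{\Theta_i}$, $Y_{\Theta_i}$ or $J_{\Theta_i}$ is used beyond their boundedness and the intertwining relations \eqref{J1D1}, \eqref{J2D2}.
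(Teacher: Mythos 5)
The paper does not actually prove this statement: it is quoted verbatim from \cite{RK3} and appears in Section 5.3 with no argument attached, so there is nothing internal to compare your proof against. That said, your argument is correct and is the standard one for results of this type (it is the same perturbation scheme used in \cite{KT} and, in the scalar case, in Sarason's Section 10): writing $S_{\Theta_i,X_{\Theta_i}}=S_{\Theta_i}+P_{\mathcal{D}_{\Theta_i}}Y_{\Theta_i}P_{\widetilde{\mathcal{D}}_{\Theta_i}}$ and expanding shows that $\bigl(A-S_{\Theta_2,X_{\Theta_2}}AS_{\Theta_1,X_{\Theta_1}}^*\bigr)-\bigl(A-S_{\Theta_2}AS_{\Theta_1}^*\bigr)$ is, for \emph{every} bounded $A$, a sum of terms carrying $P_{\mathcal{D}_{\Theta_1}}$ on the right or $P_{\mathcal{D}_{\Theta_2}}$ on the left, and \eqref{J1D1}--\eqref{J2D2} convert these into the form $BD_{\Theta_1}+D_{\Theta_2}B'^*$; hence one side of the equivalence has the required representation if and only if the other does, and Theorem \ref{tcara} finishes both directions. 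Your handling of the mixed term $D_{\Theta_2}KD_{\Theta_1}$ and of the domain restrictions to $\mathcal{D}_{\Theta_1}$, $\mathcal{D}_{\Theta_2}$ is also correct, so I see no gap.
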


\section{Connection between MATTO's and MATHO's}
Let $J_1, J_2$ be conjugations on $\hsp$ such that $\Theta_1$ is $J_1$-symmetric and $\Theta_2$ is $J_2$-symmetric. Then we have the following relations between MATTO's and MATHO's.
\begin{proposition}\label{Prop: a,b,c,d,e,f}
For every $\Phi\in L^{2}(\mathcal{L}(\hsp))$, we have
\item[(a)] $C_{\Theta_2}A_{\Phi}^{\Theta_{1}, \Theta_{2}}C_{\Theta_1}=A^{\Theta_{1}, \Theta_{2}}_{J_2\Theta_{2}^{*} \Phi \Theta_{1}J_1}$.
\item[(b)] $C_{\Theta_2}B_{\Phi}^{\Theta_{1}, \Theta_{2}}C_{\Theta_1}=B^{\Theta_{1}, \Theta_{2}}_{J_2\widetilde{\Theta}_{2} \Phi \Theta_{1}J_1}$.
\item[(c)] $\J_{2}^{*}A_{\Phi}^{\Theta_{1}, \Theta_{2}}\J_{1}^{*}=A_{J_2\Phi(\bar z)J_1}^{\widetilde{\Theta}_{1}, \widetilde{\Theta}_{2}}$.
\item[(d)] $\J_{2}^{*}B_{\Phi}^{\Theta_{1}, \Theta_{2}}\J_{1}^{*}=B_{J_2\Phi(\bar z)J_1}^{\widetilde{\Theta}_{1}, \widetilde{\Theta}_{2}}$.
\item[(e)] $C_{\Theta_2}A_{\Phi}^{\Theta_{1}, \Theta_{2}}\J_{1}^{*}=B_{J_{2}\Theta_{2}(\bar z)^{*}\Phi(\bar z) J_{1}}^{\widetilde{\Theta}_{1},\Theta_{2}}$.
\item[(f)] $\J_{2}^{*}B_{\Phi}^{\Theta_{1}, \Theta_{2}}C_{\Theta_1}=A_{J_{2}\Phi \Theta_1 J_{1}}^{\Theta_{1}, \widetilde{\Theta}_{2}}$.
\end{proposition}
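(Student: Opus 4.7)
The plan is to prove the six identities by direct computation on the dense subspace $K_{\Theta_1}^{\infty}$, built on three uniform ingredients. Because $\Theta_i$ is $J_i$-symmetric, the pointwise identity $J_i\Theta_i(z)J_i=\Theta_i(z)^{*}$ on $\mathbb{T}$ yields the interchange $\J_i^{*}\tau_{\Theta_i}=\tau_{\widetilde{\Theta}_i}\J_i^{*}$ and hence the two factorizations $C_{\Theta_i}=\J_i^{*}\tau_{\Theta_i}=\tau_{\Theta_i}^{*}\J_i^{*}$. Second, Theorem \ref{ttau} (the $\tau$-intertwining of MATHOs) has a direct MATTO analog
\[
\tau_{\Theta_2}\,A_\Phi^{\Theta_1,\Theta_2}\,\tau_{\Theta_1}^{*}=A_\chi^{\widetilde{\Theta}_1,\widetilde{\Theta}_2},\qquad \chi(z)=\widetilde{\Theta}_2(z)\,\Phi(\bar z)\,\widetilde{\Theta}_1(z)^{*},
\]
obtained by mimicking the proof of Theorem \ref{ttau} with the flip $\J$ and the projection $(I-P_{+})$ removed. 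Third, Theorem \ref{4.4} has an analog for MATTOs, namely $\J_2^{*}A_\Phi^{\Theta_1,\Theta_2}\J_1^{*}=A_{J_2\Phi(\bar z)J_1}^{\widetilde{\Theta}_1,\widetilde{\Theta}_2}$ (under the $J_i$-symmetry assumption), which is verified by commuting $\J_2^{*}$ past $P_{\Theta_2}$ via Proposition \ref{a,b,c,d}\eqref{b} and past $M_\Phi$ via \eqref{J2MF}.

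Given these ingredients, identities (c) and (d) are immediate: (d) is Theorem \ref{4.4} specialized by $J$-symmetry, and (c) is its MATTO analog. For (a) and (b), I decompose
\[
C_{\Theta_2}(\cdot)C_{\Theta_1}=\J_2^{*}\tau_{\Theta_2}(\cdot)\tau_{\Theta_1}^{*}\J_1^{*},
\]
apply the $\tau$-intertwining formula first, the $\J^{*}$-intertwining formula second. The MATHO case (b) reproduces the computation already in the proof of Theorem \ref{4.5}, yielding symbol $J_2\widetilde{\Theta}_2\Phi\Theta_1 J_1$; the MATTO case (a) yields symbol $J_2\chi(\bar z)J_1 = J_2\Theta_2(z)^{*}\Phi(z)\Theta_1(z)J_1$ after the elementary substitutions $\widetilde{\Theta}_2(\bar z)=\Theta_2(z)^{*}$ and $\widetilde{\Theta}_1(\bar z)^{*}=\Theta_1(z)$.

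The mixed identities (e) and (f) hinge on the further observation that $J_1$-symmetry collapses $C\J^{*}$ and $\J^{*}C$ to the unitary $\tau$'s:
\[
C_{\Theta_1}\J_1^{*}=\J_1^{*}\tau_{\Theta_1}\J_1^{*}=\tau_{\widetilde{\Theta}_1},\qquad \J_1^{*}C_{\Theta_1}=\J_1^{*}\J_1^{*}\tau_{\Theta_1}=\tau_{\Theta_1}.
\]
Inserting $C_{\Theta_1}^{2}=I$ into (e) and invoking (a) reduces the problem to evaluating $A_\chi^{\Theta_1,\Theta_2}\tau_{\widetilde{\Theta}_1}$ with $\chi=J_2\Theta_2^{*}\Phi\Theta_1 J_1$. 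A short symbol computation shows $A_\chi^{\Theta_1,\Theta_2}\tau_{\widetilde{\Theta}_1}=B_\Psi^{\widetilde{\Theta}_1,\Theta_2}$ with $\Psi(z)=\chi(\bar z)\Theta_1(\bar z)$, and the cancellation $\Theta_1(\bar z)\,J_1\,\Theta_1(\bar z)=\Theta_1(\bar z)\Theta_1(\bar z)^{*}J_1=J_1$ (combining $J_1\Theta_1(w)=\Theta_1(w)^{*}J_1$ with $\Theta_1(w)\Theta_1(w)^{*}=I$ on $\mathbb{T}$) collapses $\Psi$ to the stated $J_2\Theta_2(\bar z)^{*}\Phi(\bar z)J_1$. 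Identity (f) is symmetric: inserting $C_{\Theta_1}^{2}=I$ on the right, using (d) and $\J_1^{*}C_{\Theta_1}=\tau_{\Theta_1}$, reduces to the dual identity $B_\chi^{\widetilde{\Theta}_1,\widetilde{\Theta}_2}\tau_{\Theta_1}=A_\Psi^{\Theta_1,\widetilde{\Theta}_2}$ with $\Psi(z)=\chi(\bar z)\Theta_1(z)^{*}$, which again collapses to $J_2\Phi\Theta_1 J_1$ via the same $J_1$-symmetry trick. The main obstacle throughout is purely bookkeeping---tracking which of $\Theta_i(z)$, $\Theta_i(\bar z)$, $\widetilde{\Theta}_i(z)$, $\widetilde{\Theta}_i(\bar z)$ appears at each stage---so that every final symbol simplifies to the clean form displayed in (a)--(f).
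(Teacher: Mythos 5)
Your argument is correct, but it takes a genuinely different route from the paper's. The paper proves each of (a)--(f) by a separate direct computation of the bilinear form $\langle\,\cdot\,f,g\rangle$, repeatedly using the antilinearity identity $\langle Cf,g\rangle=\langle Cg,f\rangle$ and unwinding the boundary definitions of $C_{\Theta_i}$, $\J$ and $\J_i^{*}$ pointwise on $\mathbb{T}$; nothing from Section 4 beyond the definition of $C_{\Theta}$ is invoked. You instead derive everything structurally from the factorizations $C_{\Theta_i}=\J_i^{*}\tau_{\Theta_i}=\tau_{\Theta_i}^{*}\J_i^{*}$ together with the intertwining theorems: Theorem \ref{ttau} and Theorem \ref{4.4} for the Hankel halves, their (easy, but nowhere stated in the paper) MATTO analogues for the Toeplitz halves, and the two bridge identities $A_\chi^{\Theta_1,\Theta_2}\tau_{\widetilde{\Theta}_1}=B_{\chi(\bar z)\Theta_1(\bar z)}^{\widetilde{\Theta}_1,\Theta_2}$ and $B_\chi^{\widetilde{\Theta}_1,\widetilde{\Theta}_2}\tau_{\Theta_1}=A_{\chi(\bar z)\Theta_1(z)^{*}}^{\Theta_1,\widetilde{\Theta}_2}$ for the mixed cases (e) and (f). I verified the symbol bookkeeping: the Toeplitz $\tau$-conjugation produces $\widetilde{\Theta}_2(z)\Phi(\bar z)\widetilde{\Theta}_1(z)^{*}$ while the Hankel one produces $\Theta_2(z)^{*}\Phi(\bar z)\widetilde{\Theta}_1(z)^{*}$ as in \eqref{symbda}; you keep these distinct, and after conjugating by $\J_i^{*}$ and using $J_i$-symmetry all six of your final symbols agree with the statement. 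Your approach buys modularity and makes the Toeplitz--Hankel duality transparent (a single $\tau$ or $C$ flips one class into the other); its cost is having to supply the two MATTO intertwining lemmas, which the paper avoids by brute force. Two small points. In (f), ``inserting $C_{\Theta_1}^{2}=I$ on the right'' should read ``inserting $(\J_1^{*})^{2}=I$'', as your subsequent use of $\J_1^{*}C_{\Theta_1}=\tau_{\Theta_1}$ makes clear. And your remark that (b) ``reproduces the computation in the proof of Theorem \ref{4.5}'' is slightly misleading: your composition correctly yields $J_2\widetilde{\Theta}_2(z)\Phi(z)\Theta_1(z)J_1$ as in the Proposition, whereas Theorem \ref{4.5} as printed displays $J_2\Theta_{2}(z)\Phi(z)\Theta_{1}(\bar z)^{*}J_1$, a discrepancy stemming from the paper's substitution of $\widetilde{\Theta}_2(z)^{*}$ for $\Theta_2(z)^{*}$ when applying \eqref{symbda} there; your more careful bookkeeping is the one that matches the statement being proved.
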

\begin{proof}
(a)  Let $f\in K_{\Theta_{1}}$ and $g\in K_{\Theta_{2}}$ and  let $J_1 \Theta_{1}J_1=\Theta_{1}^{*}$ and $J_2\Theta_{2}J_2=\Theta_{2}^{*}$. Then
\begin{align*}
\langle C_{\Theta_2}A_{\Phi}^{\Theta_{1}, \Theta_{2}}C_{\Theta_1}f, g\rangle
&=\langle C_{\Theta_2}g, A_{\Phi}^{\Theta_{1}, \Theta_{2}}C_{\Theta_1}f\rangle
=\langle \Theta_{2}\bar z J_2 g, \Phi \Theta_{1}\bar zJ_1 f\rangle\\
&=\langle \Theta_{2}J_2 g, \Phi \Theta_{1}J_1 f\rangle
=\langle J_2 g, \Theta_{2}^{*} \Phi \Theta_{1}J_1 f\rangle\\
&=\langle J_2\Theta_{2}^{*} \Phi \Theta_{1}J_1 f,g \rangle=\langle A_{J_2\Theta_{2}^{*} \Phi \Theta_{1}J_1} f,g \rangle.
\end{align*}
(b) Let $f\in K_{\Theta_{1}}$ and $g\in K_{\Theta_{2}}$. Then
\begin{align*}
\langle C_{\Theta_2}B_{\Phi}^{\Theta_{1}, \Theta_{2}}C_{\Theta_1}f, g\rangle
&=\langle C_{\Theta_2}g, B_{\Phi}^{\Theta_{1}, \Theta_{2}}C_{\Theta_1}f\rangle
=\langle C_{\Theta_2}g, \J(I-P)(\Phi C_{\Theta_1}f)\rangle\\
&=\langle \J C_{\Theta_2}g, \Phi C_{\Theta_1}f\rangle
=\langle \J(\Theta_{2}\bar z J_2 g), \Phi \Theta_{1}\bar z J_1 f\rangle\\
&=\langle \bar z\Theta_{2}(\bar z) z J_2( g(\bar z)), \bar z\Phi \Theta_{1}J_1 f\rangle\\
&=\langle  J_2\J g, \Theta_{2}(\bar z)^{*} \Phi \Theta_{1}J_1 f\rangle
=\langle J_2\widetilde{\Theta}_{2} \Phi \Theta_{1}J_1 f, \J g \rangle\\
&=\langle P_{\Theta_2}\J(I-P)(J_2\widetilde{\Theta}_{2} \Phi \Theta_{1}J_1 f),g \rangle\\
&=\langle B_{J_2\widetilde{\Theta}_{2} \Phi \Theta_{1}J_1}^{\Theta_{1}, \Theta_{2}} f,g \rangle.\\
\end{align*}
\noindent (c) For $f\in K_{\widetilde{\Theta}_{1}}$ and $g\in K_{\widetilde{\Theta}_{2}}$, it holds that
\begin{align*}
\langle \J_{2}^{*} A_{\Phi}^{\Theta_{1}, \Theta_{2}}\J_{1}^{*} f, g\rangle
&=\langle \J^{*}_{2} g, A_{\Phi}^{\Theta_{1}, \Theta_{2}}\J_{1}^{*}f\rangle
=\langle \J^{*}_{2} g, \Phi \J_{1}^{*} f\rangle\\
&=\langle J_2 g(\bar z), \Phi J_1 f(\bar z)\rangle
=\langle J_2 \Phi J_1 f(\bar z), g(\bar z)\rangle\cr
&=\langle J_2\Phi(\bar z)J_1f, g \rangle=\langle A_{J_2\Phi(\bar z)J_1}^{\widetilde{\Theta}_{1}, \widetilde{\Theta}_{2}} f, g \rangle.
\end{align*}
(d) Let $f\in K_{\widetilde{\Theta}_{1}}$ and $g\in K_{\widetilde{\Theta}_{2}}$. Then
\begin{align*}
\langle \J_{2}^{*}B_{\Phi}^{\Theta_{1}, \Theta_{2}}\J_{1}^{*}f, g\rangle
&=\langle \J_{2}^{*}g, B_{\Phi}^{\Theta_{1}, \Theta_{2}}\J_{1}^{*}f\rangle
=\langle \J_{2}^{*} g, \J\Phi \J_{1}^{*} f\rangle\\
&=\langle \J\J_{2}^{*}g, \Phi \J_{1}^{*} f\rangle
=\langle \J_{2}^{*}\J g, \Phi \J_{1}^{*} f\rangle\\
&=\langle \J_{2}^{*}\Phi \J_{1}^{*} f, \J g\rangle
=\langle J_2 \Phi(\bar z)J_1 f(z), \J g \rangle\\
&=\langle \J (I-P_{+})(J_2 \Phi(\bar z)J_1 f(z)), g \rangle\\
&=\langle B_{J_2 \Phi(\bar z)J_1}^{\widetilde{\Theta}_{1}, \widetilde{\Theta}_{2}} f, g \rangle.
\end{align*}
(e) For $f\in K_{\widetilde{\Theta}_{1}}$ and $g\in K_{\Theta_{2}}$, it holds that
\begin{align*}
\langle C_{\Theta_2}A_{\Phi}^{\Theta_{1}, \Theta_{2}}\J_{1}^{*}f, g\rangle
&=\langle C_{\Theta_2}g, A_{\Phi}^{\Theta_{1}, \Theta_{2}}\J_{1}^{*}f\rangle
=\langle \Theta_{2}\bar z J_{2} g, \Phi \J_{1}^{*} f\rangle\\
&=\langle \Theta_{2}\J^{*}J g, \Phi \J^{*} f\rangle
=\langle f, \J_{1}^{*}\Phi^{*}\Theta_{2}\bar z J_{2} g(z) \rangle\\
&=\langle f, J_{1}\Phi(\bar z)^{*}\Theta_{2}(\bar z) \z J_{2} g(\bar z) \rangle\\
&= \langle f, J_{1}\Phi(\bar z)^{*}\Theta_{2}(\bar z) J_{2}(\bar z  g(\bar z)) \rangle\\
&=\langle f, J_{1}\Phi(\bar z)^{*}\Theta_{2}(\bar z) J_{2} \J g \rangle\\
&=\langle J_{2}\Theta_{2}(\bar z)^{*}\Phi(\bar z) J_{1}f,  \bar z g(\bar z) \rangle\\
&=\langle z J_{2}\Theta_{2}(\bar z)^{*}\Phi(\bar z) J_{1}f, g(\bar z) \rangle\\
&=\langle \bar z J_{2}\Theta_{2}( z)^{*}\Phi( z) J_{1}f(\bar z), g(z) \rangle\\
&=\langle \J J_{2}\Theta_{2}(\bar z)^{*}\Phi(\bar z) J_{1}f(z), g(z) \rangle\\
&=\langle \J(I-P_{+}) (J_{2}\Theta_{2}(\bar z)^{*}\Phi(\bar z) J_{1}f), g \rangle\\
&=\langle B_{J_{2}\Theta_{2}(\bar z)^{*}\Phi(\bar z) J_{1}}^{\widetilde{\Theta}_{1}, \Theta_{2}}f, g \rangle.
\end{align*}

(f)
Let $f\in K_{\Theta_{1}}$ and $g\in K_{\widetilde{\Theta}_{2}}$. Then
\begin{align*}
\langle \J_{2}^{*}B_{\Phi}^{\Theta_{1}, \Theta_{2}}C_{\Theta_1}f, g\rangle
&=\langle \J_{2}^{*}g, B_{\Phi}^{\Theta_{1}, \Theta_{2}}C_{\Theta_1}f\rangle
=\langle J_{2} g(\bar z), \J\Phi \Theta_1\bar z J_{1}f\rangle\\
&=\langle J_{2} g(\bar z), \bar z\Phi(\bar z) \Theta_1(\bar z) z J_{1}f(\bar z)\rangle\\
&=\langle J_{2}\Phi(\bar z) \Theta_1(\bar z)  J_{1}f(\bar z) , g(\bar z)\rangle\\
&=\langle J_{2}\Phi(z) \Theta_1(z)  J_{1}f , g\rangle
=\langle P_{\widetilde{\Theta}_{2}}(J_{2}\Phi \Theta_1 J_{1}f) , g\rangle\\
&=\langle A_{J_{2}\Phi \Theta_1 J_{1}}^{\Theta_{1}, \widetilde{\Theta}_{2}} f, g\rangle.
\end{align*}
\end{proof}
\smallskip
\begin{corollary}\label{coro: a,b,c,d,e,f}
Let $A$ and $B$ be bounded linear operators from $K_{\Theta_1}$ into $K_{\Theta_2}$ for some inner functions $\Theta_1$ and $\Theta_2$. Then
\item[(a)] $A\in \mathcal{MT}(\Theta_1, \Theta_2)$ if and only if $C_{\Theta_2}AC_{\Theta_1}\in \mathcal{MT}(\Theta_1, \Theta_2)$.
\item[(b)] $B\in \mathcal{MH}(\Theta_1, \Theta_2)$ if and only if $C_{\Theta_2}BC_{\Theta_1}\in \mathcal{MH}(\Theta_1, \Theta_2)$.
\item[(c)] $A\in \mathcal{MT}(\Theta_1, \Theta_2)$ if and only if $\J_{2}^{*}A\J_{1}^{*}\in \mathcal{MT}(\widetilde{\Theta}_1, \widetilde{\Theta}_2)$.
\item[(d)] $B\in \mathcal{MH}(\Theta_1, \Theta_2)$ if and only if $\J_{2}^{*}B\J_{1}^{*}\in \mathcal{MH}(\widetilde{\Theta}_1, \widetilde{\Theta}_2)$.
\item[(e)] $A\in \mathcal{MT}(\Theta_1, \Theta_2)$ if and only if $C_{\Theta_2}A\J_{1}^{*}\in \mathcal{MH}(\widetilde{\Theta}_1, \Theta_2)$.
\item[(f)] $B\in \mathcal{MH}(\Theta_1, \Theta_2)$ if and only if $\J_{2}^{*}B C_{\Theta_1}\in \mathcal{MT}(\Theta_1, \widetilde{\Theta}_2)$.
\end{corollary}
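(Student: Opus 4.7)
The six equivalences are direct corollaries of the explicit symbol identities in Proposition \ref{Prop: a,b,c,d,e,f}, together with the involutivity of $C_{\Theta_1}, C_{\Theta_2}$ (which requires $\Theta_i$ to be $J_i$-symmetric), the involutivity of $\J_1^*, \J_2^*$, and the identity $\widetilde{\widetilde{\Theta}}_i = \Theta_i$. My plan is to treat the six parts uniformly through a single two-step scheme: read the forward implication directly off Proposition \ref{Prop: a,b,c,d,e,f}, and handle each backward implication either by applying the forward one a second time (parts (a)--(d)) or by solving an explicit symbol equation (parts (e), (f)).

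For the forward implication of each part, I would write the hypothesized operator as $A = A_\Phi^{\Theta_1,\Theta_2}$ (resp.\ $B = B_\Phi^{\Theta_1,\Theta_2}$) with some $\Phi \in L^2(\oL)$ and then invoke the matching item of Proposition \ref{Prop: a,b,c,d,e,f} to display the transformed operator as a MATTO or MATHO on the appropriate pair of model spaces, with an explicitly identified symbol. Boundedness transfers along $C_{\Theta_i}$ and $\J_i^*$ because these are antilinear isometries that take the relevant model spaces bijectively onto one another (using $C_{\Theta_i}(K_{\Theta_i}) = K_{\Theta_i}$ together with Propositions \ref{a,b,c,d}\eqref{c} and \ref{1,2,3,4}\eqref{3}).

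For the backward implications of (a)--(d), the transformation sends each class into itself (parts (a), (b)) or into the same class on $\widetilde{\Theta}_i$ (parts (c), (d)). Applying the already-proved forward implication a second time to the transformed operator and collapsing with $C_{\Theta_i}^2 = I$, $(\J_i^*)^2 = I$, and $\widetilde{\widetilde{\Theta}}_i = \Theta_i$ recovers the original operator, exhibited now as a MATTO/MATHO of the required class.

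For the backward implications of (e) and (f), the map swaps MATTO with MATHO, so involution alone does not close the loop and I need to invert the symbol. For (e), given $C_{\Theta_2}A\J_1^* = B_\Psi^{\widetilde{\Theta}_1,\Theta_2}$, I would propose $\Phi(z) = \Theta_2(z)J_2\Psi(\bar z)J_1$; this lies in $L^2(\oL)$ because $\Theta_2(z)$ is unitary a.e.\ on $\mathbb{T}$, each $J_i$ is an antilinear isometry of $\hsp$, and $z \mapsto \bar z$ preserves the $L^2$ norm. Proposition \ref{Prop: a,b,c,d,e,f}(e) applied to $A_\Phi^{\Theta_1,\Theta_2}$ then produces the symbol $J_2\Theta_2(\bar z)^*\Phi(\bar z)J_1 = J_2\Theta_2(\bar z)^*\Theta_2(\bar z)J_2\Psi(z)J_1J_1 = \Psi(z)$, using $\Theta_2(\bar z)^*\Theta_2(\bar z) = I_{\hsp}$ a.e.\ and $J_i^2 = I_{\hsp}$, so $C_{\Theta_2}A_\Phi^{\Theta_1,\Theta_2}\J_1^* = B_\Psi^{\widetilde{\Theta}_1,\Theta_2} = C_{\Theta_2}A\J_1^*$. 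Involutivity forces $A_\Phi^{\Theta_1,\Theta_2} = A$ on the dense subspace $K_{\Theta_1}^\infty$; since $A$ is bounded, $A$ is the bounded extension of $A_\Phi^{\Theta_1,\Theta_2}$, which places $A$ in $\mathcal{MT}(\Theta_1,\Theta_2)$. Part (f) is treated symmetrically with $\Phi(z) = J_2\Psi(z)J_1\Theta_1(z)^*$. The only real obstacle is careful bookkeeping of $z$ versus $\bar z$ in the substitutions and of the $\Theta\Theta^* = I$ cancellations; no new structural input beyond Proposition \ref{Prop: a,b,c,d,e,f} is required.
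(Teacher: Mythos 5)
Your proposal is correct and takes essentially the same route as the paper, which simply derives the corollary from Proposition \ref{Prop: a,b,c,d,e,f}; your symbol inversions $\Phi(z)=\Theta_2(z)J_2\Psi(\bar z)J_1$ and $\Phi(z)=J_2\Psi(z)J_1\Theta_1(z)^*$ for the backward directions of (e) and (f) check out against the symbol formulas in that proposition. In fact you supply more detail than the paper does, which disposes of all six parts with a one-line citation.
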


\begin{proof}
The proof follows from Proposition \ref{Prop: a,b,c,d,e,f}.
\end{proof}
\smallskip
The connection between MATTO's and MATHO's makes it easy to to transform the results from MATTO's to MATHO's. The following proposition is a consequence of this connection.
\begin{proposition}
Let $\Theta_1$ and $\Theta_2$ be two nonconstant inner functions and let $\Phi\in L^2(\oL)$. Then $B_{\Phi}^{\Theta_1, \Theta_2}=0$ if and only if $\Phi\in J_2 [H^{2}(\oL)]^* J_1+J_2\widetilde{\Theta}_2 H^2(\oL)\Theta_1J_1$.
\end{proposition}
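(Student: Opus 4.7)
The plan is to transfer the vanishing problem for the MATHO $B_{\Phi}^{\Theta_{1},\Theta_{2}}$ to the already known vanishing problem for a MATTO (Remark \ref{rmk}(b)), using the bridge between the two classes supplied by Proposition \ref{Prop: a,b,c,d,e,f}(f).

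First I would apply Proposition \ref{Prop: a,b,c,d,e,f}(f) to obtain
\[
\J_{2}^{*}B_{\Phi}^{\Theta_{1},\Theta_{2}}C_{\Theta_{1}}=A_{J_{2}\Phi\Theta_{1}J_{1}}^{\Theta_{1},\widetilde{\Theta}_{2}}.
\]
Since $\J_{2}^{*}$ and $C_{\Theta_{1}}$ are conjugations (in particular antilinear bijections), $B_{\Phi}^{\Theta_{1},\Theta_{2}}=0$ if and only if $A_{J_{2}\Phi\Theta_{1}J_{1}}^{\Theta_{1},\widetilde{\Theta}_{2}}=0$. By Remark \ref{rmk}(b) applied to the source inner function $\Theta_{1}$ and the target inner function $\widetilde{\Theta}_{2}$, this holds if and only if
\[
J_{2}\Phi\Theta_{1}J_{1}=\widetilde{\Theta}_{2}F_{1}+F_{2}^{*}\Theta_{1}^{*}
\qquad \text{for some } F_{1},F_{2}\in H^{2}(\oL).
\]

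Next I would solve for $\Phi$. Multiplying the displayed equation by $J_{2}$ on the left and by $J_{1}\Theta_{1}^{*}$ on the right (using $J_{i}^{2}=I$ and $\Theta_{1}\Theta_{1}^{*}=I$ a.e.\ on $\mathbb{T}$) yields
\[
\Phi=J_{2}\widetilde{\Theta}_{2}F_{1}J_{1}\Theta_{1}^{*}+J_{2}F_{2}^{*}\Theta_{1}^{*}J_{1}\Theta_{1}^{*}.
\]
The $J_{1}$-symmetry of $\Theta_{1}$, namely $J_{1}\Theta_{1}(z)J_{1}=\Theta_{1}(z)^{*}$ a.e.\ on $\mathbb{T}$, gives the two key pushing rules $J_{1}\Theta_{1}^{*}=\Theta_{1}J_{1}$ and $\Theta_{1}^{*}J_{1}=J_{1}\Theta_{1}$. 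Using the first, the first term becomes $J_{2}\widetilde{\Theta}_{2}F_{1}\Theta_{1}J_{1}\in J_{2}\widetilde{\Theta}_{2}H^{2}(\oL)\Theta_{1}J_{1}$; using the second, $\Theta_{1}^{*}J_{1}\Theta_{1}^{*}=J_{1}\Theta_{1}\Theta_{1}^{*}=J_{1}$ on $\mathbb{T}$, so the second term becomes $J_{2}F_{2}^{*}J_{1}\in J_{2}[H^{2}(\oL)]^{*}J_{1}$. This gives the ``only if'' direction.

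For the converse, I would run the same computation in reverse: if $\Phi=J_{2}\widetilde{\Theta}_{2}G_{1}\Theta_{1}J_{1}+J_{2}G_{2}^{*}J_{1}$ with $G_{1},G_{2}\in H^{2}(\oL)$, a direct check using $J_{1}\Theta_{1}=\Theta_{1}^{*}J_{1}$ gives $J_{2}\Phi\Theta_{1}J_{1}=\widetilde{\Theta}_{2}G_{1}+G_{2}^{*}\Theta_{1}^{*}\in \widetilde{\Theta}_{2}H^{2}(\oL)+(\Theta_{1}H^{2}(\oL))^{*}$, so by Remark \ref{rmk}(b) and Proposition \ref{Prop: a,b,c,d,e,f}(f), $B_{\Phi}^{\Theta_{1},\Theta_{2}}=0$.

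The main obstacle is purely bookkeeping in Step 3: the expressions mix linear operators $\Theta_{i},F_{i}$ with antilinear conjugations $J_{i}$, and one must be careful not to commute linear and antilinear factors carelessly. Everything else is a direct application of results already proved earlier in the paper (Proposition \ref{Prop: a,b,c,d,e,f}(f) and Remark \ref{rmk}(b)).
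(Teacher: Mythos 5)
Your proposal is correct and follows essentially the same route as the paper: both reduce the problem via Proposition \ref{Prop: a,b,c,d,e,f}(f) to the vanishing of $A_{J_{2}\Phi\Theta_{1}J_{1}}^{\Theta_{1},\widetilde{\Theta}_{2}}$, invoke the known symbol characterization of zero MATTO's, and then unwind the algebra using $J_{1}\Theta_{1}^{*}=\Theta_{1}J_{1}$ and $\Theta_{1}\Theta_{1}^{*}=I$ a.e.\ on $\mathbb{T}$. The only cosmetic difference is that you argue elementwise with $F_{1},F_{2}$ while the paper writes the same manipulations as a chain of set equivalences.
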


\begin{proof}
Proposition \ref{Prop: a,b,c,d,e,f}$(f)$ implies that $B_{\Phi}^{\Theta_1, \Theta_2}=0$ if and only if $A_{J_{2}\Phi \Theta_1 J_{1}}^{\Theta_{1}, \widetilde{\Theta}_{2}}=0$. By \cite[Theorem 4.2]{RK2}, $A_{J_{2}\Phi \Theta_1 J_{1}}^{\Theta_{1}, \widetilde{\Theta}_{2}}=0$ if and only if

\begin{eqnarray*}
&&J_{2}\Phi \Theta_1 J_{1} \in [\Theta_1 H^2(\oL)]^* +\widetilde{\Theta}_2H^2(\oL)\\
&\Leftrightarrow&\Phi \Theta_1\in J_2[\Theta_1 H^2(\oL)]^*J_1 +J_2\widetilde{\Theta}_2 H^2(\oL)J_1\cr
&\Leftrightarrow&\Phi \in J_2 [\Theta_1 H^2(\oL)]^*J_1\Theta_1^{*} +J_2\widetilde{\Theta}_2 H^2(\oL)J_1\Theta_1^{*}\cr
&\Leftrightarrow&\Phi \in J_2 [ H^2(\oL)]^*\Theta_1^{*}\Theta_1 J_1 +J_2 \widetilde{\Theta}_2H^2(\oL)\Theta_1J_1\cr
&\Leftrightarrow&\Phi \in J_2 [ H^2(\oL)]^* J_1 +J_2 \widetilde{\Theta}_2H^2(\oL)\Theta_1J_1.
\end{eqnarray*}
Hence we complete the proof.
\end{proof}

\section{Characterization of MATHO's}
We will use the results of matrix matrix valued asymmetric truncated Toeplitz operators to characterize matrix valued asymmetric Hankel operators. Recall that for the inner function $\Theta$ we have $S_{\Theta}=C_{\Theta}S_{\Theta}^{*}C_{\Theta}$ and $\J^*S_{\widetilde{\Theta}}\J^*=S_{\Theta}$. Furthermore, the defect operators are given by $D_{\Theta}=I-S_{\Theta}S_{\Theta}^*$ and $\widetilde{D}_{\Theta}= I - S_{\Theta}^*S_{\Theta}$. \par
A simple computation shows that the following relation holds
\begin{equation}
D_{\Theta}C_{\Theta}=C_{\Theta}\widetilde{D}_{\Theta} \quad \text{and} \quad \J^{*}D_{\widetilde{\Theta}}=D_{\Theta}\J^{*}.
\end{equation}
The following theorems are generalization of \cite[Proposition 4.2]{Gu}.
\smallskip
\begin{theorem}\label{characterization 1}
Let $\Theta_1, \Theta_2$ be two pure inner functions and let $B:K_{\Theta_1}\to K_{\Theta_2}$ be a bounded linear operator. Then $B\in \mathcal{MH}(\Theta_1, \Theta_2)$ if and only if there exist $B_{1}^{'}:\widetilde{\mathcal{D}}_{\Theta_1}\to K_{\Theta_2}$ and $B_{2}^{'}:\mathcal{D}_{\Theta_2}\to K_{\Theta_1}$ such that
\begin{equation}
B-S_{\Theta_2}BS_{\Theta_1}=B_{1}^{'}\widetilde{D}_{\Theta_1}+D_{\Theta_2}B^{'*}_2.
\end{equation}
\end{theorem}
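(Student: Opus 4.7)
The plan is to reduce this characterization of MATHO's to the known characterization of MATTO's in Theorem \ref{tcara}, using the correspondence from part (f) of Corollary \ref{coro: a,b,c,d,e,f}. That result says $B\in\mathcal{MH}(\Theta_1,\Theta_2)$ if and only if the conjugated operator
$$A:=\J_2^{*}B\,C_{\Theta_1}\in\mathcal{MT}(\Theta_1,\widetilde{\Theta}_2).$$
By Theorem \ref{tcara} applied to $A$, the latter is equivalent to the existence of $B_1:\mathcal{D}_{\Theta_1}\to K_{\widetilde{\Theta}_2}$ and $B_2:\mathcal{D}_{\widetilde{\Theta}_2}\to K_{\Theta_1}$ with
$$A-S_{\widetilde{\Theta}_2}A\,S_{\Theta_1}^{*}=B_1 D_{\Theta_1}+D_{\widetilde{\Theta}_2}B_2^{*}.$$
So the task reduces to transporting this identity across the conjugations $\J_2^{*}$ and $C_{\Theta_1}$.

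Using the intertwining relations recalled at the start of Section~7, namely $S_{\widetilde{\Theta}_2}=\J_2^{*}S_{\Theta_2}\J_2^{*}$ (since $\Theta_2$ is $J_2$-symmetric) together with $C_{\Theta_1}S_{\Theta_1}^{*}=S_{\Theta_1}C_{\Theta_1}$ (since $S_{\Theta_1}=C_{\Theta_1}S_{\Theta_1}^{*}C_{\Theta_1}$), one computes
$$S_{\widetilde{\Theta}_2}A\,S_{\Theta_1}^{*}=\J_2^{*}S_{\Theta_2}B\,S_{\Theta_1}C_{\Theta_1},$$
so that
$$A-S_{\widetilde{\Theta}_2}A\,S_{\Theta_1}^{*}=\J_2^{*}\bigl(B-S_{\Theta_2}BS_{\Theta_1}\bigr)C_{\Theta_1}.$$
Pre-composing with $\J_2^{*}$ and post-composing with $C_{\Theta_1}$ (both involutions) and invoking the identities $D_{\Theta_1}C_{\Theta_1}=C_{\Theta_1}\widetilde{D}_{\Theta_1}$ and $\J_2^{*}D_{\widetilde{\Theta}_2}=D_{\Theta_2}\J_2^{*}$ transforms the MATTO identity into
$$B-S_{\Theta_2}BS_{\Theta_1}=\bigl(\J_2^{*}B_1 C_{\Theta_1}\bigr)\widetilde{D}_{\Theta_1}+D_{\Theta_2}\bigl(\J_2^{*}B_2^{*}C_{\Theta_1}\bigr).$$

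The final step is to identify the two factors on the right as $B_1'\widetilde{D}_{\Theta_1}$ and $D_{\Theta_2}B_2'^{*}$ respectively. One checks (using $C_{\Theta}\ke_0^{\Theta}x=\widetilde{\ke}_0^{\Theta}Jx$ and Proposition~\ref{1,2,3,4}) that $C_{\Theta_1}$ maps $\widetilde{\mathcal{D}}_{\Theta_1}$ onto $\mathcal{D}_{\Theta_1}$ and $\J_2^{*}$ maps $K_{\widetilde{\Theta}_2}$ onto $K_{\Theta_2}$; hence setting $B_1':=\J_2^{*}B_1 C_{\Theta_1}\big|_{\widetilde{\mathcal{D}}_{\Theta_1}}$ gives a linear (antilinear$\circ$linear$\circ$antilinear) operator $\widetilde{\mathcal{D}}_{\Theta_1}\to K_{\Theta_2}$. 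Similarly $\J_2^{*}B_2^{*}C_{\Theta_1}$ is linear with range inside $\mathcal{D}_{\Theta_2}$, and a direct adjoint computation (using $\langle Cu,v\rangle=\overline{\langle u,Cv\rangle}$ for any conjugation $C$) gives
$$\bigl(\J_2^{*}B_2^{*}C_{\Theta_1}\bigr)^{*}=C_{\Theta_1}B_2\J_2^{*}=:B_2'\colon\mathcal{D}_{\Theta_2}\to K_{\Theta_1},$$
so $\J_2^{*}B_2^{*}C_{\Theta_1}=B_2'^{*}$, yielding the stated identity.

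For the converse direction, given $B_1'$ and $B_2'$ satisfying the displayed equality, one defines $B_1:=\J_2^{*}B_1'C_{\Theta_1}\colon\mathcal{D}_{\Theta_1}\to K_{\widetilde{\Theta}_2}$ and $B_2:=C_{\Theta_1}B_2'\J_2^{*}\colon\mathcal{D}_{\widetilde{\Theta}_2}\to K_{\Theta_1}$ and runs the computation in reverse to see that $A=\J_2^{*}BC_{\Theta_1}$ satisfies the hypothesis of Theorem \ref{tcara}; then Corollary \ref{coro: a,b,c,d,e,f}(f) returns $B\in\mathcal{MH}(\Theta_1,\Theta_2)$. The main subtlety is the bookkeeping around antilinearity, specifically verifying that the antilinear-linear-antilinear compositions yield genuinely linear operators whose Hilbert-space adjoints have the claimed form and domains/ranges $\widetilde{\mathcal{D}}_{\Theta_1}\to K_{\Theta_2}$ and $\mathcal{D}_{\Theta_2}\to K_{\Theta_1}$; all other steps are formal applications of the intertwining relations already recorded in the paper.
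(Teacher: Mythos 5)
Your proposal is correct and follows essentially the same route as the paper: reduce to Theorem \ref{tcara} via the correspondence $B\mapsto \J_2^{*}BC_{\Theta_1}$ of Corollary \ref{coro: a,b,c,d,e,f}(f), then conjugate the resulting identity by $\J_2^{*}$ and $C_{\Theta_1}$ using $\J_2^{*}S_{\widetilde{\Theta}_2}\J_2^{*}=S_{\Theta_2}$, $C_{\Theta_1}S_{\Theta_1}^{*}C_{\Theta_1}=S_{\Theta_1}$, $D_{\Theta_1}C_{\Theta_1}=C_{\Theta_1}\widetilde{D}_{\Theta_1}$ and $\J_2^{*}D_{\widetilde{\Theta}_2}=D_{\Theta_2}\J_2^{*}$, arriving at the same operators $B_1'=\J_2^{*}B_1C_{\Theta_1}$ and $B_2'=C_{\Theta_1}B_2\J_2^{*}$. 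If anything, you are slightly more careful than the printed proof (which writes $S_{\Theta_2}$ where $S_{\widetilde{\Theta}_2}$ is meant and leaves the converse and the antilinearity bookkeeping implicit), and you correctly note that the intertwining relations rely on the standing $J$-symmetry assumption on $\Theta_1,\Theta_2$.
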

\begin{proof}
Let $B=B_{\Phi}^{\Theta_1, \Theta_2}\in \mathcal{MH}(\Theta_1, \Theta_2)$. By Proposition \ref{Prop: a,b,c,d,e,f}(f) we have
$$\J_2^{*}BC_{\Theta_1}=A_{J_2\Phi \Theta J_1}^{\Theta_1, \widetilde{\Theta}_2}\in \mathcal{MT}(\Theta_1, \widetilde{\Theta}_2).$$ Therefore by Theorem \ref{tcara} we have
\begin{align*}
&\J_{2}^{*}BC_{\Theta_1}-S_{\Theta_2}\J_{2}^{*}BC_{\Theta_1}S_{\Theta_1}^* =B_1\dfc{1}+\dfct{2}B_2^*\\
\Rightarrow &~ B-\J_{2}^{*}S_{\Theta_2}\J_{2}^{*}BC_{\Theta_1}S_{\Theta_1}^*C_{\Theta_1}=\J_{2}^{*}B_1\dfc{1}C_{\Theta_1}+\J_2^{*}\dfct{2}B_2^*C_{\Theta_1}\\
\Rightarrow &~ B-S_{\Theta_2}BS_{\Theta_1}=\J_{2}^{*}B_1C_{\Theta_1}\dcf{1}+\dfc{2}\J_2^{*}B_2^*C_{\Theta_1}\\
\Rightarrow &~ B-S_{\Theta_2}BS_{\Theta_1}=B_1^{'}\dcf{1}+\dfc{2}B_2^{'*}
\end{align*}
where $B_1^{'}=\J_{2}^{*}B_1C_{\Theta_1}$ and $B_2^{'}=(C_{\Theta_1}B_2\J_2^{*})^*$.
\end{proof}
\smallskip
\begin{corollary}\label{characterization ii}
Let $\Theta_1, \Theta_2$ be two pure inner functions and let $B:K_{\Theta_1}\to K_{\Theta_2}$ be a bounded linear operator. Then $B\in \mathcal{MH}(\Theta_1, \Theta_2)$ if and only if there exist $\widetilde{B}_{1}:\widetilde{\mathcal{D}}_{\Theta_1}\to K_{\Theta_2}$ and $\widetilde{B}_{2}:\widetilde{\mathcal{D}}_{\Theta_2}\to K_{\Theta_1}$ such that
\begin{equation}
S_{\Theta_2}^*B-BS_{\Theta_1}=\widetilde{B}_{1}\dcf{1}+\dcf{2}\widetilde{B}^{*}_2.
\end{equation}
\end{corollary}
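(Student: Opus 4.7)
The plan is to transport the problem to the MATTO side via the bijection from Proposition \ref{Prop: a,b,c,d,e,f}(f) and then apply a known MATTO characterization, exactly as in the proof of Theorem \ref{characterization 1} but now using part (i) of the Corollary following Theorem \ref{tca2} in place of Theorem \ref{tcara}.

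Concretely, I would first invoke Corollary \ref{coro: a,b,c,d,e,f}(f) to restate $B\in\mathcal{MH}(\Theta_1,\Theta_2)$ as $A := \J_2^{*} B C_{\Theta_1} \in \mathcal{MT}(\Theta_1, \widetilde{\Theta}_2)$. By part (i) of the Corollary immediately following Theorem \ref{tca2}, this in turn is equivalent to the existence of bounded operators $\widehat{B}_1:\mathcal{D}_{\Theta_1}\to K_{\widetilde{\Theta}_2}$ and $\widehat{B}_2:\widetilde{\mathcal{D}}_{\widetilde{\Theta}_2}\to K_{\Theta_1}$ with
\[
S_{\widetilde{\Theta}_2}^{*} A - A S_{\Theta_1}^{*} = \widehat{B}_1 D_{\Theta_1} + \widetilde{D}_{\widetilde{\Theta}_2}\widehat{B}_2^{*}.
\]

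Next I would multiply this identity by $\J_2^{*}$ on the left and $C_{\Theta_1}$ on the right, and use four intertwinings: $\J_2^{*} S_{\widetilde{\Theta}_2} \J_2^{*} = S_{\Theta_2}$ (so that $S_{\Theta_2}^{*}\J_2^{*} = \J_2^{*} S_{\widetilde{\Theta}_2}^{*}$), $C_{\Theta_1} S_{\Theta_1}^{*} C_{\Theta_1} = S_{\Theta_1}$, $D_{\Theta_1} C_{\Theta_1} = C_{\Theta_1}\widetilde{D}_{\Theta_1}$, and $\J_2^{*} \widetilde{D}_{\widetilde{\Theta}_2} = \widetilde{D}_{\Theta_2}\J_2^{*}$ (the last obtained from the $S$-intertwining by conjugating $I-S^{*}S$). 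The first pair turns the left-hand side into $S_{\Theta_2}^{*} B - B S_{\Theta_1}$, while the second pair pushes the antilinear involutions past the defect operators on the right. Setting $\widetilde{B}_1 = \J_2^{*}\widehat{B}_1 C_{\Theta_1}$ and $\widetilde{B}_2 = C_{\Theta_1}\widehat{B}_2\J_2^{*}$ (so that $\widetilde{B}_2^{*} = \J_2^{*}\widehat{B}_2^{*} C_{\Theta_1}$ via the rule $(J_2 T J_1)^{*} = J_1 T^{*} J_2$ for products with conjugations) then collapses the right-hand side to $\widetilde{B}_1 \widetilde{D}_{\Theta_1} + \widetilde{D}_{\Theta_2}\widetilde{B}_2^{*}$, which is the asserted identity. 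The converse follows by reversing the same chain of commutations.

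The main obstacle I expect is bookkeeping with the two antilinear involutions: one must verify that $\widetilde{B}_1:\widetilde{\mathcal{D}}_{\Theta_1}\to K_{\Theta_2}$ and $\widetilde{B}_2:\widetilde{\mathcal{D}}_{\Theta_2}\to K_{\Theta_1}$ have the advertised domains and codomains, using that $C_{\Theta_1}$ interchanges $\widetilde{\mathcal{D}}_{\Theta_1}$ and $\mathcal{D}_{\Theta_1}$ and that $\J_2^{*}$ sends $\widetilde{\mathcal{D}}_{\Theta_2}$ onto $\widetilde{\mathcal{D}}_{\widetilde{\Theta}_2}$. Beyond this, every step is a short algebraic manipulation, so the genuine content of the corollary is already packaged in the MATTO characterization following Theorem \ref{tca2}; the proof is essentially a dictionary translation.
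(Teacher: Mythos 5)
Your proof is correct, but it does not follow the paper's route. The paper obtains this corollary from Theorem \ref{characterization 1} by a purely algebraic manipulation inside the Hankel setting: it left-multiplies the identity $B-S_{\Theta_2}BS_{\Theta_1}=B_1'\widetilde{D}_{\Theta_1}+D_{\Theta_2}B_2'^{*}$ by $S_{\Theta_2}^{*}$, uses $S_{\Theta_2}^{*}S_{\Theta_2}=I-\widetilde{D}_{\Theta_2}$ and $S_{\Theta_2}^{*}D_{\Theta_2}=\widetilde{D}_{\Theta_2}S_{\Theta_2}^{*}$, and absorbs the leftover term $-\widetilde{D}_{\Theta_2}BS_{\Theta_1}$ into the second summand. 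You instead go back to the Toeplitz side a second time: transport $B$ to $A=\J_2^{*}BC_{\Theta_1}\in\mathcal{MT}(\Theta_1,\widetilde{\Theta}_2)$ and invoke part (a) of the corollary following Theorem \ref{tca2} for the pair $(\Theta_1,\widetilde{\Theta}_2)$, then conjugate by $\J_2^{*}$ and $C_{\Theta_1}$ using the intertwinings $\J_2^{*}S_{\widetilde{\Theta}_2}^{*}=S_{\Theta_2}^{*}\J_2^{*}$, $C_{\Theta_1}S_{\Theta_1}^{*}C_{\Theta_1}=S_{\Theta_1}$, $D_{\Theta_1}C_{\Theta_1}=C_{\Theta_1}\widetilde{D}_{\Theta_1}$ and $\J_2^{*}\widetilde{D}_{\widetilde{\Theta}_2}=\widetilde{D}_{\Theta_2}\J_2^{*}$; all of these are either stated in Section 7 or follow immediately from the stated $S$-intertwinings, and your domain/codomain bookkeeping ($C_{\Theta_1}$ exchanging $\widetilde{\mathcal{D}}_{\Theta_1}$ and $\mathcal{D}_{\Theta_1}$, $\J_2^{*}$ carrying $\widetilde{\mathcal{D}}_{\Theta_2}$ onto $\widetilde{\mathcal{D}}_{\widetilde{\Theta}_2}$ and $K_{\widetilde{\Theta}_2}$ onto $K_{\Theta_2}$) is right, granted the standing $J$-symmetry hypotheses of Section 6. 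What your route buys is a cleaner equivalence: every step is conjugation by involutions plus an "if and only if" citation, so the converse is automatic, whereas the paper's one-sided multiplication by $S_{\Theta_2}^{*}$ requires the small extra argument (reinsert $S_{\Theta_2}$ and absorb a $D_{\Theta_2}B$ term) to recover the reverse implication. What the paper's route buys is economy: no second round of antilinear bookkeeping, and the corollary is seen directly as a formal consequence of Theorem \ref{characterization 1} alone.
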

\begin{proof}
Let $B=B_{\Phi}^{\Theta_1, \Theta_2}\in \mathcal{MH}(\Theta_1, \Theta_2)$. By Theorem \ref{characterization 1} and using the defect operators $I-\dcf{2}=S_{\Theta_2}^{*}S_{\Theta_2}$ we have
\begin{align*}
& B-S_{\Theta_2}BS_{\Theta_1}=B_1^{'}\dcf{1}+\dfc{2}B_2^{'*}\\
\Rightarrow &~ S_{\Theta_2}^{*}B-S_{\Theta_2}^{*}S_{\Theta_2}BS_{\Theta_1}^*=S_{\Theta_2}^{*}B_1^{'}\dcf{1}+S_{\Theta_2}^{*}\dfc{2}B_2^{'*}\\
\Rightarrow &~ S_{\Theta_2}^{*}B-(I-\dcf{2})BS_{\Theta_1}=S_{\Theta_2}^{*}B_1^{'}\dcf{1}+\dcf{2}S_{\Theta_2}^{*}B_2^{'*}\\
\Rightarrow &~S_{\Theta_2}^{*}B-BS_{\Theta_1}=S_{\Theta_2}^{*}B_1^{'}\dcf{1}+\dcf{2}S_{\Theta_2}^{*}B_2^{'*}-\dcf{2}BS_{\Theta_1}\\
\Rightarrow &~S_{\Theta_2}^{*}B-BS_{\Theta_1}=S_{\Theta_2}^{*}B_1^{'}\dcf{1}+\dcf{2}(S_{\Theta_2}^{*}B_2^{'*}-BS_{\Theta_1})\\
\Rightarrow &~S_{\Theta_2}^*B-BS_{\Theta_1}=\widetilde{B}_{1}\dcf{1}+\dcf{2}\widetilde{B}^{*}_2
\end{align*}
where $\widetilde{B}_{1}=S_{\Theta_2}^{*}B_1^{'}: \widetilde{\mathcal{D}}_{\Theta_1}\to K_{\Theta_2}$ and $\widetilde{B}^{*}_2=(P_{\widetilde{\mathcal{D}}_{\Theta_2}}(S_{\Theta_2}^{*}B_2^{'*}-BS_{\Theta_1}))^{*}:\widetilde{\mathcal{D}}_{\Theta_2}\to K_{\Theta_1}$.
\end{proof}

\smallskip
\begin{theorem}\label{characterization iii}
Let $\Theta_1, \Theta_2$ be two pure inner functions and let $B:K_{\Theta_1}\to K_{\Theta_2}$ be a bounded linear operator. Then $B\in \mathcal{MH}(\Theta_1, \Theta_2)$ if and only if there exist $\widehat{B}_{1}:\mathcal{D}_{\Theta_1}\to K_{\Theta_2}$ and $\widehat{B}_{2}:\widetilde{\mathcal{D}}_{\Theta_2}\to K_{\Theta_1}$ such that
\begin{equation}
B-S_{\Theta_2}^*BS_{\Theta_1}^*=\widehat{B}_1\dfc{1}+\dcf{2}\widehat{B}^{*}_2.
\end{equation}
\end{theorem}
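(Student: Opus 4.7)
The plan is to mirror the proofs of Theorem \ref{characterization 1} and Corollary \ref{characterization ii}, reducing the claim to an existing MATTO characterization via the Hankel--Toeplitz correspondence. Set $A := \J_2^{*} B C_{\Theta_1}$; Corollary \ref{coro: a,b,c,d,e,f}(f) tells us that $B \in \mathcal{MH}(\Theta_1,\Theta_2)$ if and only if $A \in \mathcal{MT}(\Theta_1, \widetilde{\Theta}_2)$. Because the identity we aim to prove has $S^{*}$ on both sides and pairs $\dfc{1}$ with $\dcf{2}$, the natural MATTO ingredient is Theorem \ref{tca2} applied to $A$, which yields $\widetilde{B}_1 : \widetilde{\mathcal{D}}_{\Theta_1} \to K_{\widetilde{\Theta}_2}$ and $\widetilde{B}_2 : \widetilde{\mathcal{D}}_{\widetilde{\Theta}_2} \to K_{\Theta_1}$ with
$$A - S_{\widetilde{\Theta}_2}^{*} A S_{\Theta_1} = \widetilde{B}_1 \dcf{1} + \widetilde{D}_{\widetilde{\Theta}_2} \widetilde{B}_2^{*}.$$

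The next step is to sandwich this identity by $\J_2^{*}$ on the left and $C_{\Theta_1}$ on the right, inserting $\J_2^{*}\J_2^{*}=I$ and $C_{\Theta_1} C_{\Theta_1}=I$ at appropriate places to recover $B = \J_2^{*} A C_{\Theta_1}$ on the far left and to rewrite each intermediate factor. The identities recalled at the beginning of Section 7,
$$\J_2^{*} S_{\widetilde{\Theta}_2}^{*} \J_2^{*} = S_{\Theta_2}^{*},\qquad C_{\Theta_1} S_{\Theta_1} C_{\Theta_1} = S_{\Theta_1}^{*},$$
$$\dcf{1} C_{\Theta_1} = C_{\Theta_1} \dfc{1},\qquad \J_2^{*} \widetilde{D}_{\widetilde{\Theta}_2} = \dcf{2} \J_2^{*}$$
(the last two following routinely from $D_\Theta C_\Theta = C_\Theta \widetilde{D}_\Theta$, $\J^{*} D_{\widetilde\Theta} = D_\Theta \J^{*}$ and the $J$-symmetry of $\Theta_1,\Theta_2$) collapse the left-hand side to $B - S_{\Theta_2}^{*} B S_{\Theta_1}^{*}$ and the right-hand side to $\widehat{B}_1 \dfc{1} + \dcf{2} \widehat{B}_2^{*}$, where
$$\widehat{B}_1 := \J_2^{*} \widetilde{B}_1 C_{\Theta_1},\qquad \widehat{B}_2^{*} := \J_2^{*} \widetilde{B}_2^{*} C_{\Theta_1}.$$
A short bookkeeping check on the defect subspaces, using that $C_{\Theta_1}$ interchanges $\mathcal{D}_{\Theta_1}$ with $\widetilde{\mathcal{D}}_{\Theta_1}$ and that $\J_2^{*}$ maps $\widetilde{\mathcal{D}}_{\widetilde{\Theta}_2}$ onto $\widetilde{\mathcal{D}}_{\Theta_2}$ (clear from $\widetilde{\ke}_0^{\Theta_2}(z)x = \bar z(\Theta_2(z)-\Theta_2(0))x$ and the $J_2$-symmetry of $\Theta_2$), gives $\widehat{B}_1 : \mathcal{D}_{\Theta_1} \to K_{\Theta_2}$ and $\widehat{B}_2 : \widetilde{\mathcal{D}}_{\Theta_2} \to K_{\Theta_1}$, as required.

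For the converse, I would run the same chain of substitutions in reverse: starting from the claimed identity for $B$, the same sandwiching by $\J_2^{*}$ and $C_{\Theta_1}$ shows that $A = \J_2^{*} B C_{\Theta_1}$ verifies the hypothesis of Theorem \ref{tca2} with $\widetilde{B}_1 := \J_2^{*} \widehat{B}_1 C_{\Theta_1}$ and $\widetilde{B}_2^{*} := \J_2^{*} \widehat{B}_2^{*} C_{\Theta_1}$, so $A \in \mathcal{MT}(\Theta_1, \widetilde{\Theta}_2)$, and Corollary \ref{coro: a,b,c,d,e,f}(f) delivers $B \in \mathcal{MH}(\Theta_1, \Theta_2)$. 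The only real subtlety will be the unambiguous interpretation of the antilinear composition $\widehat{B}_2^{*} = \J_2^{*} \widetilde{B}_2^{*} C_{\Theta_1}$ and its ``adjoint'' $\widehat{B}_2$; this is the same technical nuisance encountered in the proof of Theorem \ref{characterization 1}, and it is handled identically.
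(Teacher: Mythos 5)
Your argument is correct and follows essentially the same route as the paper: transfer $B$ to a MATTO via one of the Section~6 Hankel--Toeplitz correspondences, apply the known MATTO characterization, and sandwich the resulting identity back using $C_{\Theta_1}S_{\Theta_1}C_{\Theta_1}=S_{\Theta_1}^*$, $\J_2^*S_{\widetilde{\Theta}_2}^*\J_2^*=S_{\Theta_2}^*$ and the defect-operator intertwining relations. The only (harmless) difference is that the paper invokes Proposition~\ref{Prop: a,b,c,d,e,f}(e) together with Theorem~\ref{tcara}, writing $B=C_{\Theta_2}A\J_1^*$ with $A\in\mathcal{MT}(\widetilde{\Theta}_1,\Theta_2)$ and conjugating by $C_{\Theta_2}(\cdot)\J_1^*$, whereas you use the dual pair --- Corollary~\ref{coro: a,b,c,d,e,f}(f) with Theorem~\ref{tca2} and conjugation by $\J_2^*(\cdot)C_{\Theta_1}$ --- which produces the same identity and the same domain bookkeeping.
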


\begin{proof}
Let $A=A_{\Phi}^{\Theta_1, \Theta_2}\in \mathcal{MT}(\Theta_1, \Theta_2)$. By Proposition \ref{Prop: a,b,c,d,e,f}(e), we have
$$C_{\Theta_2}A_{\Phi}^{\Theta_1, \Theta_2}\J_{1}^*=B_{J_2\Theta(\bar z)^* \Phi(\bar z) J_1}^{\widetilde{\Theta}_1,\Theta_2}=B_{\Psi}^{\widetilde{\Theta}_1,\Theta_2}$$ and so
$$A_{\Phi}^{\widetilde{\Theta}_1, \Theta_2}=C_{\Theta_2}B_{\Psi}^{\Theta_1,\Theta_2}\J_{1}^*=C_{\Theta_2}B\J_{1}^*\in \mathcal{MT}(\widetilde{\Theta}_1, \Theta_2)$$ where $B=B_{\Psi}^{\Theta_1,\Theta_2}.$
Therefore by Theorem \ref{tcara}, we have
\begin{align*}
& C_{\Theta_2}B\J_{1}^{*}-S_{\Theta_2}C_{\Theta_2}B\J_{1}^{*}S_{\widetilde{\Theta}_1}^* =B_1\dfct{1}+\dfc{2}B_2^*\\
\Rightarrow & B-C_{\Theta_2}S_{\Theta_2}C_{\Theta_2}B\J_{1}^{*}S_{\widetilde{\Theta}_1}^*\J_{1}^{*} =C_{\Theta_2}B_1\dfct{1}\J_{1}^{*}+C_{\Theta_2}\dfc{2}B_2^*\J_{1}^{*}\\
\Rightarrow& B-S_{\Theta_2}^*BS_{\Theta_1}^*=C_{\Theta_2}B_1\J_{1}^{*}\dfc{1}+\dcf{2}C_{\Theta_2}B_2^*\J_{1}^*\\
\Rightarrow& B-S_{\Theta_2}^*BS_{\Theta_1}^*=\widehat{B}_1\dfc{1}+\dcf{2}\widehat{B}_{2}^{*}
\end{align*}
where $\widehat{B}_1=C_{\Theta_2}B_1\J_{1}^{*}: \mathcal{D}_{\Theta_1}\to K_{\Theta_2}$ and $\widehat{B}_2=(C_{\Theta_2}B_2^*\J_{1}^*)^*:\widetilde{\mathcal{D}}_{\Theta_2}\to K_{\Theta_1}$.
\end{proof}

\begin{corollary}\label{characterization iv}
Let $\Theta_1, \Theta_2$ be two pure inner functions and let $B:K_{\Theta_1}\to K_{\Theta_2}$ be a bounded linear operator. Then $B\in \mathcal{MH}(\Theta_1, \Theta_2)$ if and only if there exist $\widehat{B}_{1}^{'}:\mathcal{D}_{\Theta_1}\to K_{\Theta_2}$ and $\widehat{B}_{2}^{'}:\mathcal{D}_{\Theta_2}\to K_{\Theta_1}$ such that
\begin{equation}
S_{\Theta_2}B-BS_{\Theta_1}^*=\widehat{B}_{1}^{'}\dfc{1}+\dfc{2}\widehat{B}^{'*}_2.
\end{equation}
\end{corollary}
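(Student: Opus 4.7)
The plan is to derive Corollary~\ref{characterization iv} from Theorem~\ref{characterization iii} by left-multiplying through by $S_{\Theta_2}$, in exact analogy with how Corollary~\ref{characterization ii} was obtained from Theorem~\ref{characterization 1} by left-multiplication by $S_{\Theta_2}^{*}$. So first I would assume $B\in\mathcal{MH}(\Theta_1,\Theta_2)$ and invoke Theorem~\ref{characterization iii} to produce operators $\widehat{B}_1:\mathcal{D}_{\Theta_1}\to K_{\Theta_2}$ and $\widehat{B}_2:\widetilde{\mathcal{D}}_{\Theta_2}\to K_{\Theta_1}$ with
$$B-S_{\Theta_2}^{*}BS_{\Theta_1}^{*}=\widehat{B}_1\dfc{1}+\dcf{2}\widehat{B}_2^{*}.$$

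Next I would apply $S_{\Theta_2}$ from the left and use the two elementary identities $S_{\Theta_2}S_{\Theta_2}^{*}=I-\dfc{2}$ and $S_{\Theta_2}\dcf{2}=\dfc{2}S_{\Theta_2}$, the latter following from the expansion $S_{\Theta_2}(I-S_{\Theta_2}^{*}S_{\Theta_2})=(I-S_{\Theta_2}S_{\Theta_2}^{*})S_{\Theta_2}$. These manipulations transform the right-hand side to $S_{\Theta_2}\widehat{B}_1\dfc{1}+\dfc{2}S_{\Theta_2}\widehat{B}_2^{*}$ and the left-hand side to $S_{\Theta_2}B-(I-\dfc{2})BS_{\Theta_1}^{*}$, and after moving the $\dfc{2}BS_{\Theta_1}^{*}$ term to the right we obtain
$$S_{\Theta_2}B-BS_{\Theta_1}^{*}=S_{\Theta_2}\widehat{B}_1\dfc{1}+\dfc{2}\bigl(S_{\Theta_2}\widehat{B}_2^{*}-BS_{\Theta_1}^{*}\bigr).$$
Setting $\widehat{B}_1^{\prime}=S_{\Theta_2}\widehat{B}_1:\mathcal{D}_{\Theta_1}\to K_{\Theta_2}$ and $\widehat{B}_2^{\prime}=\bigl(P_{\mathcal{D}_{\Theta_2}}(S_{\Theta_2}\widehat{B}_2^{*}-BS_{\Theta_1}^{*})\bigr)^{*}:\mathcal{D}_{\Theta_2}\to K_{\Theta_1}$ yields the required factorization, after noting that since $D_{\Theta_2}f=\ke_{0}^{\Theta_2}f(0)$ vanishes on $\mathcal{D}_{\Theta_2}^{\perp}$, inserting $P_{\mathcal{D}_{\Theta_2}}$ does not change the value of $\dfc{2}\widehat{B}_2^{\prime *}$.

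For the converse I would run the argument backwards: starting from the identity in the statement, left-multiply by $S_{\Theta_2}^{*}$ and use the dual identities $S_{\Theta_2}^{*}S_{\Theta_2}=I-\dcf{2}$ and $S_{\Theta_2}^{*}\dfc{2}=\dcf{2}S_{\Theta_2}^{*}$ to recover an identity of the form demanded by Theorem~\ref{characterization iii}, which then forces $B\in\mathcal{MH}(\Theta_1,\Theta_2)$. The computation itself is a routine intertwining exercise; the only real bookkeeping issue, and hence the main (mild) obstacle, is verifying that the new auxiliary operators $\widehat{B}_1^{\prime}$ and $\widehat{B}_2^{\prime}$ have the declared domains and codomains, which is precisely what the projection $P_{\mathcal{D}_{\Theta_2}}$ in the definition of $\widehat{B}_2^{\prime}$ is designed to ensure.
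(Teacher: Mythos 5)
Your proposal is correct and follows essentially the same route as the paper: both derive the identity by left-multiplying the conclusion of Theorem~\ref{characterization iii} by $S_{\Theta_2}$, using $S_{\Theta_2}S_{\Theta_2}^*=I-D_{\Theta_2}$ and the intertwining $S_{\Theta_2}\widetilde{D}_{\Theta_2}=D_{\Theta_2}S_{\Theta_2}$, and then defining $\widehat{B}_1'=S_{\Theta_2}\widehat{B}_1$ and $\widehat{B}_2'=\bigl(P_{\mathcal{D}_{\Theta_2}}(S_{\Theta_2}\widehat{B}_2^*-BS_{\Theta_1}^*)\bigr)^*$ exactly as in the paper. Your additional remarks on reversibility of the argument and on why inserting $P_{\mathcal{D}_{\Theta_2}}$ is harmless are consistent with (and slightly more explicit than) the published proof.
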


\begin{proof}
By Theorem \ref{characterization iii},
 $B\in \mathcal{MH}(\Theta_1, \Theta_2)$ if and only if
$$B-S_{\Theta_2}^*BS_{\Theta_1}^*=\widehat{B}_1\dfc{1}+\dcf{2}\widehat{B}^{*}_2,$$
which is equivalent to
$$S_{\Theta_2}B-S_{\Theta_2}S_{\Theta_2}^*BS_{\Theta_1}^*=S_{\Theta_2}\widehat{B}_1\dfc{1}+S_{\Theta_2}\dcf{2}\widehat{B}^{*}_2.$$

By the defect operator $\dfc{2}=I-S_{\Theta_2}S_{\Theta_2}^*$, we have
 \begin{align*}
&S_{\Theta_2}B-(I-\dfc{2})BS_{\Theta_1}^*=S_{\Theta_2}\widehat{B}_1\dfc{1}+S_{\Theta_2}\dcf{2}\widehat{B}^{*}_2\\
\Rightarrow& S_{\Theta_2}B-BS_{\Theta_1}^*=S_{\Theta_2}\widehat{B}_1\dfc{1}+S_{\Theta_2}\dcf{2}\widehat{B}^{*}_2-\dfc{2}BS_{\Theta_1}^*\\
\Rightarrow& S_{\Theta_2}B-BS_{\Theta_1}^*=S_{\Theta_2}\widehat{B}_1\dfc{1}+\dfc{2}S_{\Theta_2}\widehat{B}^{*}_2-\dfc{2}BS_{\Theta_1}^*\\
\Rightarrow& S_{\Theta_2}B-BS_{\Theta_1}^*=S_{\Theta_2}\widehat{B}_1\dfc{1}+\dfc{2}(S_{\Theta_2}\widehat{B}^{*}_2-BS_{\Theta_1}^*)\\
\Rightarrow& S_{\Theta_2}B-BS_{\Theta_1}^*=\widehat{B}_{1}^{'}\dfc{1}+\dfc{2}\widehat{B}^{'*}_2
\end{align*}
where $\widehat{B}_{1}^{'}=S_{\Theta_2}\widehat{B}_1: \mathcal{D}_{\Theta_1}\to K_{\Theta_2}$ and $\widehat{B}_{2}^{'}=(P_{\mathcal{D}_2}(S_{\Theta_2}\widehat{B}^{*}_2-BS_{\Theta_1}^*))^*:\mathcal{D}_{\Theta_2}\to K_{\Theta_1}$.
\end{proof}

\section{Characterization by modified compressed shift}
In the scalar case the compressed shift was introduced in \cite[section 10]{sar}.
The notion of modified compressed shift was introduced in section \ref{char by of MTTO by mod comp shift} (see \cite{KT}). Characterization of matrix valued asymmetric truncated Toeplitz operators by using modified compressed shift was presented in \cite{RK3}. In this part we use the modified compressed shift to characterize MATHO's.
\begin{theorem}
Let $\Theta_1, \Theta_2$ be two pure inner functions and let $B:K_{\Theta_1}\to K_{\Theta_2}$ be a bounded linear operator. Then $B\in \mathcal{MH}(\Theta_1, \Theta_2)$ if and only if one (and all) of the following conditions holds.
\item[(a)] $B-S_{\Theta_2, X_2}BS_{\Theta_1, X_1}=\mathbb{B}_1\dfc{1}+\dfc{2}\mathbb{B}_{2}^{'*}$.
\item[(b)] $S_{\Theta_2, X_2}^*B-BS_{\Theta_1, X_1}=\widetilde{\mathbb{B}}_1\dcf{1}+\dcf{2}\widetilde{\mathbb{B}}_{2}^{'*}$.
\item[(c)] $B-S_{\Theta_2, X_2}^*BS_{\Theta_1, X_1}^*=\mathbb{B}_3\dcf{1}+\dcf{2}\mathbb{B}_{4}^{'*}$.
\item[(d)] $S_{\Theta_2, X_2}B-BS_{\Theta_1, X_1}^*=\widetilde{\mathbb{B}}_3\dfc{1}+\dfc{2}\widetilde{\mathbb{B}}_{4}^{'*}$.
\end{theorem}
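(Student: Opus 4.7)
I would prove the four equivalences by reducing each of them to its plain-shift counterpart from Section 7, namely Theorem \ref{characterization 1}, Corollary \ref{characterization ii}, Theorem \ref{characterization iii}, and Corollary \ref{characterization iv}, respectively. The algebraic device is the defining relation of the modified compressed shift,
\[
S_{\Theta_i, X_{\Theta_i}} \;=\; S_{\Theta_i} \,+\, P_{\mathcal{D}_{\Theta_i}} Y_{\Theta_i} P_{\widetilde{\mathcal{D}}_{\Theta_i}}, \qquad S_{\Theta_i, X_{\Theta_i}}^{*} \;=\; S_{\Theta_i}^{*} \,+\, P_{\widetilde{\mathcal{D}}_{\Theta_i}} Y_{\Theta_i}^{*} P_{\mathcal{D}_{\Theta_i}},
\]
combined with the factorizations $P_{\mathcal{D}_{\Theta_i}} = D_{\Theta_i} J_{\Theta_i} = J_{\Theta_i}^{*} D_{\Theta_i}$ of \eqref{J1D1}--\eqref{J2D2} and their tilde analogues $P_{\widetilde{\mathcal{D}}_{\Theta_i}} = \widetilde{D}_{\Theta_i} \widetilde{J}_{\Theta_i} = \widetilde{J}_{\Theta_i}^{*} \widetilde{D}_{\Theta_i}$, which can be obtained from \eqref{J1D1}--\eqref{J2D2} applied to $\widetilde{\Theta}_i$ and then conjugated with $\tau_{\widetilde{\Theta}_i}$, using $\widetilde{D}_{\Theta_i} = \tau_{\widetilde{\Theta}_i} D_{\widetilde{\Theta}_i} \tau_{\widetilde{\Theta}_i}^{*}$ from \eqref{ddd} and $\widetilde{\mathcal{D}}_{\Theta_i} = \tau_{\widetilde{\Theta}_i}(\mathcal{D}_{\widetilde{\Theta}_i})$.

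I would carry (a) out in detail and indicate that (b)--(d) are analogous. Starting from $B = B_{\Phi}^{\Theta_1, \Theta_2} \in \mathcal{MH}(\Theta_1, \Theta_2)$, Theorem \ref{characterization 1} supplies $B_1', B_2'$ with $B - S_{\Theta_2} B S_{\Theta_1} = B_1' \widetilde{D}_{\Theta_1} + D_{\Theta_2} B_2^{'*}$. Expanding $S_{\Theta_2, X_{\Theta_2}} B S_{\Theta_1, X_{\Theta_1}}$ via the defining relation produces three extra cross terms beyond $S_{\Theta_2} B S_{\Theta_1}$, each carrying either $P_{\mathcal{D}_{\Theta_2}}$ on the left or $P_{\widetilde{\mathcal{D}}_{\Theta_1}}$ on the right (or both). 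Using $P_{\mathcal{D}_{\Theta_2}} = D_{\Theta_2} J_{\Theta_2}$ I pull a $D_{\Theta_2}$ out on the left of those terms, and using $P_{\widetilde{\mathcal{D}}_{\Theta_1}} = \widetilde{J}_{\Theta_1}^{*} \widetilde{D}_{\Theta_1}$ I pull a defect out on the right. Regrouping yields the required decomposition $\mathbb{B}_1(\cdot_{\Theta_1}) + (\cdot_{\Theta_2})\mathbb{B}_2^{'*}$, with $\mathbb{B}_1$ and $\mathbb{B}_2'$ expressible in closed form in terms of $B_1', B_2', Y_{\Theta_i}, J_{\Theta_i}, \widetilde{J}_{\Theta_i}, P_{\mathcal{D}_{\Theta_i}}, P_{\widetilde{\mathcal{D}}_{\Theta_i}}$. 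For (b)--(d) the recipe is identical: apply Corollary \ref{characterization ii} / Theorem \ref{characterization iii} / Corollary \ref{characterization iv} and substitute each occurrence of $S_{\Theta_i}$ or $S_{\Theta_i}^{*}$ by its modified counterpart via the defining relation above, then absorb every resulting cross term into the appropriate defect-operator column on the right-hand side.

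The converse is the same calculation read backwards: given any one of (a)--(d), substitute $S_{\Theta_i, X_{\Theta_i}} = S_{\Theta_i} + P_{\mathcal{D}_{\Theta_i}} Y_{\Theta_i} P_{\widetilde{\mathcal{D}}_{\Theta_i}}$, re-expand, and push the resulting extra terms into the right-hand side using the same factorizations; what remains is exactly the corresponding unmodified characterization from Section 7, so $B \in \mathcal{MH}(\Theta_1, \Theta_2)$. The main obstacle is purely algebraic bookkeeping: for each of the four conditions one must track three cross terms generated on each side of the identity, confirm that each either begins with a projection onto $\mathcal{D}_{\Theta_2}$ / $\widetilde{\mathcal{D}}_{\Theta_2}$ or ends with a projection onto $\mathcal{D}_{\Theta_1}$ / $\widetilde{\mathcal{D}}_{\Theta_1}$, and then apply the appropriate factorization from \eqref{J1D1}--\eqref{J2D2} or its tilde analogue. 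Once the tilde analogue is made available (one line of computation with $\tau_{\widetilde{\Theta}_i}$), there is no new ingredient beyond the four existing theorems.
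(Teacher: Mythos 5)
Your route is sound but genuinely different from the paper's. The paper disposes of this theorem in one line --- ``analogous to the proof of Theorem \ref{characterization 1}'' --- meaning: pass to the MATTO $\J_2^*BC_{\Theta_1}$ via Proposition \ref{Prop: a,b,c,d,e,f}(f), invoke the modified-compressed-shift characterization of MATTO's from Section \ref{char by of MTTO by mod comp shift}, and conjugate back with $\J_2^*$ and $C_{\Theta_1}$ using $D_{\Theta}C_{\Theta}=C_{\Theta}\widetilde{D}_{\Theta}$ and $\J^*D_{\widetilde{\Theta}}=D_{\Theta}\J^*$. You instead stay entirely on the Hankel side: you expand $S_{\Theta_i,X_{\Theta_i}}=S_{\Theta_i}+P_{\mathcal{D}_{\Theta_i}}Y_{\Theta_i}P_{\widetilde{\mathcal{D}}_{\Theta_i}}$, absorb the cross terms into the defect columns via $P_{\mathcal{D}_{\Theta_i}}=D_{\Theta_i}J_{\Theta_i}=J_{\Theta_i}^*D_{\Theta_i}$ and the tilde analogue you correctly derive from \eqref{ddd}, and reduce to Theorem \ref{characterization 1}, Corollary \ref{characterization ii}, Theorem \ref{characterization iii} and Corollary \ref{characterization iv}. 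What your approach buys is self-containedness (no appeal to the MATTO modified-shift theorem, which is only cited from a preprint) and a reversible calculation that handles both implications symmetrically; what the paper's approach buys is brevity, at the cost of needing to know how $C_{\Theta}$ and $\J^*$ intertwine the modified shifts themselves.

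One concrete point you should confront: your reduction does not reproduce parts (a) and (c) as literally printed. In (a) the correction on the right is $BP_{\mathcal{D}_{\Theta_1}}Y_{\Theta_1}P_{\widetilde{\mathcal{D}}_{\Theta_1}}$, which ends in $P_{\widetilde{\mathcal{D}}_{\Theta_1}}=\widetilde{J}_{\Theta_1}^*\dcf{1}$, so your method (and equally the paper's conjugation route, since $\dfc{1}C_{\Theta_1}=C_{\Theta_1}\dcf{1}$) yields $\mathbb{B}_1\dcf{1}+\dfc{2}\mathbb{B}_2^{'*}$, matching the pattern of Theorem \ref{characterization 1}, whereas the statement has $\dfc{1}$ in the first column; dually, in (c) the reduction yields $\mathbb{B}_3\dfc{1}+\dcf{2}\mathbb{B}_4^{'*}$, matching Theorem \ref{characterization iii}, not the printed $\dcf{1}$. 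These appear to be typographical slips in the statement rather than defects in your argument, but as written your proof establishes the corrected identities, so you should either note the correction or supply a separate argument if the printed defects are intended. With that caveat, and granting the routine bookkeeping of restricting the grouped left factors to $\mathcal{D}_{\Theta_1}$ or $\widetilde{\mathcal{D}}_{\Theta_1}$ and inserting $P_{\mathcal{D}_{\Theta_2}}$ or $P_{\widetilde{\mathcal{D}}_{\Theta_2}}$ before taking adjoints (exactly as the paper does in Corollaries \ref{characterization ii} and \ref{characterization iv}), your plan goes through in both directions.
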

\begin{proof}
The proof is analogous to the proof of Theorem \ref{characterization 1}.
\end{proof}
\smallskip
\begin{corollary}\label{S. Invar}
Let $B: K_{\Theta_1}\to K_{\Theta_2}$ be a bounded linear operator. Then $B\in \mathcal{MH}(\Theta_1, \Theta_2)$ if and only if it has one (and all) of the following properties
\item[(a)] $\langle BSf, S^*g\rangle=\langle Bf, g\rangle~\text{for all} ~ f\in K_{\Theta_1},~ g\in K_{\Theta_2}~ \text{such that } ~f\in \widetilde{\mathcal{D}}_{\Theta_1}^{\perp} ~\text{and}~ g\in \mathcal{D}_{\Theta_2}^{\perp}$.\label{S. Invar a}
\item[(b)] $\langle Bf, Sg\rangle=\langle BSf, g\rangle~\text{for all} ~ f\in K_{\Theta_1},~ g\in K_{\Theta_2}~ \text{such that } ~f\in \widetilde{\mathcal{D}}_{\Theta_1}^{\perp} ~\text{and}~ g\in \widetilde{\mathcal{D}}_{\Theta_2}^{\perp}$.\label{S. Invar b}
\item[(c)] $\langle BS^*f, Sg\rangle=\langle Bf, g\rangle~\text{for all} ~ f\in K_{\Theta_1},~ g\in K_{\Theta_2}~ \text{such that } ~f\in \mathcal{D}_{\Theta_1}^{\perp} ~\text{and}~ g\in \widetilde{\mathcal{D}}_{\Theta_2}^{\perp}$.\label{S. Invar c}
\item[(d)] $\langle Bf, S^*g\rangle=\langle BS^*f, g\rangle~\text{for all} ~ f\in K_{\Theta_1},~ g\in K_{\Theta_2}~ \text{such that } ~f\in \mathcal{D}_{\Theta_1}^{\perp} ~\text{and}~ g\in \mathcal{D}_{\Theta_2}^{\perp}$.\label{S. Invar d}
\end{corollary}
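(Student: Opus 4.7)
The plan is to show that each of the four shift-invariance conditions is equivalent to the corresponding defect identity already established for MATHO's in the previous section: specifically, (a) corresponds to Theorem \ref{characterization 1}, (b) to Corollary \ref{characterization ii}, (c) to Theorem \ref{characterization iii}, and (d) to Corollary \ref{characterization iv}. Since each of those four identities is in turn equivalent to $B \in \mathcal{MH}(\Theta_1, \Theta_2)$, all four shift-invariance conditions will be equivalent to membership in $\mathcal{MH}(\Theta_1, \Theta_2)$, and hence to one another.

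For the forward direction of (a), I would start with the decomposition $B - S_{\Theta_2} B S_{\Theta_1} = B_1' \widetilde{D}_{\Theta_1} + D_{\Theta_2} B_2'^*$ given by Theorem \ref{characterization 1}, and pair both sides against $f \in K_{\Theta_1}$ and $g \in K_{\Theta_2}$ with $f \perp \widetilde{\mathcal{D}}_{\Theta_1}$ and $g \perp \mathcal{D}_{\Theta_2}$. By the explicit formulas for $\widetilde{D}_{\Theta_1}$ and $D_{\Theta_2}$ recorded in Section \ref{s4}, these operators vanish on the corresponding orthogonal complements, so the right-hand side vanishes. Since $f \perp \widetilde{\mathcal{D}}_{\Theta_1}$ gives $S_{\Theta_1} f = Sf$ and $g \perp \mathcal{D}_{\Theta_2}$ gives $S_{\Theta_2}^* g = S^* g$, this yields $\langle Bf, g\rangle = \langle BSf, S^* g\rangle$, which is (a). The forward directions of (b), (c), (d) are completely analogous, invoking Corollary \ref{characterization ii}, Theorem \ref{characterization iii}, and Corollary \ref{characterization iv} respectively.

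For the reverse direction of (a), set $T = B - S_{\Theta_2} B S_{\Theta_1}$, a bounded operator from $K_{\Theta_1}$ to $K_{\Theta_2}$. The hypothesis rewrites as $\langle Tf, g\rangle = 0$ for every $f \in \widetilde{\mathcal{D}}_{\Theta_1}^{\perp}$ and $g \in \mathcal{D}_{\Theta_2}^{\perp}$, so
\[
T = T P_{\widetilde{\mathcal{D}}_{\Theta_1}} + P_{\mathcal{D}_{\Theta_2}} T (I - P_{\widetilde{\mathcal{D}}_{\Theta_1}}).
\]
Because $\Theta_1$ and $\Theta_2$ are pure, the formulas $D_{\Theta}\ke_0^{\Theta}x = \ke_0^{\Theta}(I - \Theta(0)\Theta(0)^*)x$ and its $\widetilde{D}_{\Theta}$ analogue from Section \ref{s4} show that $\widetilde{D}_{\Theta_1}$ and $D_{\Theta_2}$ are invertible when restricted to the finite-dimensional defect subspaces $\widetilde{\mathcal{D}}_{\Theta_1}$ and $\mathcal{D}_{\Theta_2}$. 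Consequently there exist bounded $\widetilde{J}_{\Theta_1}, J_{\Theta_2}$ with $P_{\widetilde{\mathcal{D}}_{\Theta_1}} = \widetilde{J}_{\Theta_1}^* \widetilde{D}_{\Theta_1}$ and $P_{\mathcal{D}_{\Theta_2}} = D_{\Theta_2} J_{\Theta_2}$, the analogues of \eqref{J1D1}--\eqref{J2D2}. Substituting gives $T = B_1' \widetilde{D}_{\Theta_1} + D_{\Theta_2} B_2'^*$ with $B_1' = T\widetilde{J}_{\Theta_1}^*|_{\widetilde{\mathcal{D}}_{\Theta_1}}$ and $B_2'^* = J_{\Theta_2} T(I - P_{\widetilde{\mathcal{D}}_{\Theta_1}})$, and Theorem \ref{characterization 1} concludes $B \in \mathcal{MH}(\Theta_1, \Theta_2)$. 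The reverse implications for (b), (c), (d) are identical in structure, with Corollary \ref{characterization ii}, Theorem \ref{characterization iii}, and Corollary \ref{characterization iv} replacing Theorem \ref{characterization 1}, and with the roles of $S_{\Theta_i}$, $S_{\Theta_i}^*$, $D_{\Theta_i}$, $\widetilde{D}_{\Theta_i}$ permuted to match each case.

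The main obstacle I anticipate is the reverse-direction factorization: turning the weak annihilation $\langle Tf,g\rangle = 0$ into an honest algebraic identity of the form $B_1'\widetilde{D}_{\Theta_1} + D_{\Theta_2} B_2'^*$ requires the invertibility of the defect operators on their defect subspaces, and this is precisely where the purity hypothesis $\|\Theta_i(0)\| < 1$ enters decisively. Everything else (the identification of $Sf$ with $S_{\Theta_1}f$ on $\widetilde{\mathcal{D}}_{\Theta_1}^{\perp}$, of $S^*g$ with $S_{\Theta_2}^*g$ on $\mathcal{D}_{\Theta_2}^{\perp}$, and the corresponding symmetric identifications in cases (b)--(d)) is routine bookkeeping.
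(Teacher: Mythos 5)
Your forward direction is exactly the paper's argument: pair the identity $B-S_{\Theta_2}BS_{\Theta_1}=B_1'\widetilde{D}_{\Theta_1}+D_{\Theta_2}B_2'^{*}$ of Theorem \ref{characterization 1} against $f\perp\widetilde{\mathcal{D}}_{\Theta_1}$ and $g\perp\mathcal{D}_{\Theta_2}$, use $\widetilde{D}_{\Theta_1}f=0$ and $D_{\Theta_2}B_2'^{*}f\in\mathcal{D}_{\Theta_2}\perp g$, and identify $Sf=S_{\Theta_1}f$, $S^{*}g=S_{\Theta_2}^{*}g$; the paper then declares (b)--(d) analogous, as you do. Where you go beyond the paper is the converse: the paper's written proof only establishes that membership in $\mathcal{MH}(\Theta_1,\Theta_2)$ implies the invariance properties and never returns from (a)--(d) to the defect identities. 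Your argument for that step --- rewriting the hypothesis as $\langle Tf,g\rangle=0$ for $T=B-S_{\Theta_2}BS_{\Theta_1}$, decomposing $T=TP_{\widetilde{\mathcal{D}}_{\Theta_1}}+P_{\mathcal{D}_{\Theta_2}}T(I-P_{\widetilde{\mathcal{D}}_{\Theta_1}})$, and factoring the projections through the defect operators via the analogues of \eqref{J1D1}--\eqref{J2D2}, which exist because purity of $\Theta_i$ makes $I-\Theta_i(0)\Theta_i(0)^{*}$ invertible and hence the defect operators invertible on their finite-dimensional defect subspaces --- is sound and is precisely the mechanism used in the MATTO shift-invariance theorem of \cite{RK3} and \cite{KT} that this corollary imitates. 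So your proposal is correct, matches the paper where the paper gives details, and fills in the half of the equivalence the paper leaves unproved.
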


\begin{proof}
 (a) Let $f\in \widetilde{\mathcal{D}}_{\Theta_1}^{\perp}$ and $g\in \mathcal{D}_{\Theta_2}^{\perp}$. Then
 $Sf=S_{\Theta_1}f$ and $S^*g=S_{\Theta_2}g$ and hence
 \begin{align*}
 \langle BSf, S^*g\rangle
 &=\langle BS_{\Theta_1}f, S_{\Theta_2}^*g\rangle=\langle S_{\Theta_2}BS_{\Theta_1}f, g\rangle.
 \end{align*}
 By Theorem \ref{characterization 1}, we have
 \begin{align*}
 \langle BSf, S^*g\rangle
 &=\langle (B-(B_1^{'}\dcf{1}+\dfc{2}B_{2}^{'*}))f, g\rangle\\
 &=\langle Bf, g\rangle-\langle B_1^{'}\dcf{1}f,g\rangle-\langle\dfc{2}B_{2}^{'*}f, g\rangle
 \end{align*}
 where $B_{1}^{'}: \widetilde{D}_{\Theta_1}\to K_{\Theta_2}$ and $B_2^{'}:\mathcal{D}_{\Theta_2}\to K_{\Theta_1}$.
 Since $f\in \widetilde{\mathcal{D}}_{\Theta_1}^{\perp}$. This implies that $S_{\Theta_1}f=zf(z)$ and hence
 $S_{\Theta_1}^*S_{\Theta_1}f=f$ means that
 $\dcf{1}f=(I-S_{\Theta_1}^*S_{\Theta_1})f=0$. Therefore
 $$\langle B_1^{'}\dcf{1}f, g\rangle=0.$$
 Furthermore, since $f\in\widetilde{\mathcal{D}}_{\Theta_1}^{\perp}$ implies that $\dfc{2}B_2^{'*}f\in \mathcal{D}_{\Theta_2}$ and $f\in \mathcal{D}_{\Theta_2}^{\perp}$ follows that
 $\langle \dfc{2}B_2^{'*}f, g\rangle=0$. Therefore, we have
 $$ \langle BSf, S^*g\rangle=\langle Bf, g\rangle.$$
 The proofs of (b), (c) and (d) are analogous and is left to the reader.
\end{proof}

\end{document}